\newtheorem{assumption}{Assumption}[section]
\newtheorem{proposition}{Proposition}[section]
\newtheorem{theorem}{Theorem}[section]
\newtheorem{lemma}{Lemma}[section]
\newtheorem{remark}{Remark}[section]
\newtheorem{definition}{Definition}[section]
\newtheorem{problem_b}{Problem}[section]
\newcommand{\BA}{\textnormal{\textbf{A}}}   
\newcommand{\BM}{M}
\newcommand{\m}{\boldsymbol{m}}
\newcommand{\M}{\mathcal{M}}                
\newcommand{\pr}{\textnormal{Pr}}           
\newcommand{\prh}{\textnormal{Pr}_h}        
\newcommand{\wein}{\mathcal{H}}             
\newcommand{\aast}{a^{\ast}}
\newcommand{\Btensor}{\mathcal{B}}
\newcommand{\Dtensor}{\mathcal{D}}
\def \S    {\mathcal{S}}
\newcommand{\A}{\mathcal{A}}
\newcommand{\diff}{\frac{\d}{\d t}}
\newcommand{\Ga}{\Gamma}
\newcommand{\Gat}{\Gamma(t)}
\newcommand{\GT}{\mathcal{G}_T}
\newcommand{\nbg}{\nabla_{\Gamma}}
\newcommand{\nbgh}{\nabla_{\Gamma_h}}
\newcommand{\mat}{\partial^{\bullet}}
\def \P {\mathcal{P}_h}
\newcommand{\Pt}{\widetilde{\mathcal{P}}_h}
\newcommand{\co}{continuous}
\def \d {\mathrm{d}}
\newcommand{\disp}{\displaystyle}
\newcommand{\inv}{^{-1}}
\newcommand{\la}{\langle}
\newcommand{\N}{\mathbb{N}}
\newcommand{\nb}{\nabla}
\newcommand{\op}{operator}
\newcommand{\pa}{\partial}
\newcommand{\R}{\mathbb{R}}
\newcommand{\ra}{\rangle}
\newcommand{\resp}{respectively}
\newcommand{\spn}{\textnormal{span}}
\newcommand{\st}{such that}
\def \t {(t)}
\newcommand{\Th}{\mathcal{T}_h}
\def \to {\rightarrow}
\newcommand{\vphi}{\varphi}
\newcounter{quotecount}
\newcommand\Linfty[2][]{%
  \lVert #2\rVert_{%
    L^{\infty}%
    \ifx\\#1\\\else(#1)\fi%
  }%
}
\newcommand{\Linftyy}[2][]{%
  \lVert #2\rVert_{
    L^{\infty}%
    \ifthenelse{\isempty{#1}}{}{(#1)}
  }
}
\newcommand\Ltwo[2][]{%
  \lVert #2\rVert_{%
    L^{2}%
    \ifx\\#1\\\else(#1)\fi%
  }%
}
\newcommand\Lone[2][]{%
  \lVert #2\rVert_{%
    L^{1}%
    \ifx\\#1\\\else(#1)\fi%
  }%
}
\newcommand\abs[1]{%
  \lvert #1\rvert
}
\DeclareDocumentCommand{\Lnorm}{%
  O{} O{2} m%
}{\lVert #3\rVert_{%
    L^{#2}%
    \ifx\\#1\\\else(#1)\fi%
  }%
}
\DeclareDocumentCommand{\Wnorm}{%
  O{} O{1,\infty} m%
}{\lVert #3\rVert_{%
    W^{#2}%
    \ifx\\#1\\\else(#1)\fi%
  }%
}
\DeclareDocumentCommand{\Hnorm}{%
  O{} O{1} m%
}{\lVert #3\rVert_{%
    H^{#2}%
    \ifx\\#1\\\else(#1)\fi%
  }%
}
\newcommand\surface{\Ga}
\newcommand{\oset}[3][0ex]{%
  \mathrel{\mathop{#3}\limits^{
    \vbox to#1{\kern-2\ex@
    \hbox{$\scriptstyle#2$}\vss}}}}
\DeclareDocumentCommand{\HspaceO}{%
  O{1} m%
}{\oset[0ex]{\circ\hphantom{#1}}{H^{#1}}\!\! (#2)%
}
\DeclareDocumentCommand{\WspaceO}{%
  O{k,p} m%
}{\oset[0ex]{\circ\hphantom{#1}}{W^{#1}}\!\! (#2)%
}
\DeclareDocumentCommand{\Hspace}{%
  O{1} m%
}{H^{#1}(#2)%
}
\DeclareDocumentCommand{\Wspace}{%
  O{k,p} m%
}{W^{#1}(#2)%
}
\def\@seccntformat#1{\@ifundefined{#1@cntformat}%
   {\csname the#1\endcsname\quad}  
   {\csname #1@cntformat\endcsname}
}
\let\oldappendix\appendix 
\renewcommand\appendix{%
    \oldappendix
    \newcommand{\section@cntformat}{\appendixname~\thesection:\quad}
}
\begin{document}
\title{Error analysis for full discretizations of quasilinear parabolic problems on evolving surfaces}
\author{Bal\'{a}zs Kov\'{a}cs\footnote{MTA-ELTE NumNet Research Group, P{\'a}zm{\'a}ny P. s{\'e}t{\'a}ny 1/C, 1117 Budapest, Hungary; E-mail address: koboaet@cs.elte.hu \newline Present address: Mathematisches Institut, University of T\"{u}bingen} \, and 
Christian Andreas Power Guerra\footnote{Mathematisches Institut, University of T\"{u}bingen, Auf der Morgenstelle 10, 72076 T\"{u}bingen, Germany, E-mail address: power@na.uni-tuebingen.de}}

\maketitle

\begin{abstract}
  Convergence results are shown for full discretizations of quasilinear parabolic partial differential equations on evolving surfaces. As a semidiscretization in space the evolving surface finite element method is considered, using a regularity result of a generalized Ritz map, optimal order error estimates for the spatial discretization is shown. Combining this with the stability results for Runge--Kutta and BDF time integrators, we obtain convergence results for the fully discrete problems.
\end{abstract}

\noindent Keywords: quasilinear problems, evolving surfaces, ESFEM, Ritz map, Runge--Kutta and BDF methods, energy estimates;

\section{Introduction}
In this paper we show convergence of full discretizations of quasilinear parabolic partial differential equations on evolving surfaces. As a spatial discretization we consider the evolving surface finite element method. The resulting system of ordinary differential equations is discretized, either with an algebraically stable Runge--Kutta method, or with an implicit or linearly implicit backward differentiation formulae.

To our knowledge \cite{Elliott2015} is the only work on error analysis for nonlinear problems on evolving surfaces. They give semidiscrete error bounds for the Cahn--Hilliard equation. The authors are not aware of fully discrete error estimates published in the literature.

\medskip
We show convergence results for full discretizations of quasilinear parabolic problems on evolving surfaces with prescribed velocity. We prove unconditional stability and higher-order convergence results for Runge--Kutta and BDF methods. We show convergence as a full discretization when coupled with the ESFEM method as a space discretization for quasilinear problems. Similarly to the linear case the stability analysis is relying on energy estimates and multiplier techniques.

First, we generalize some geometric preturbation estimates to the quasilinear setting. We define a \emph{generalized Ritz map} for quasilinear operators, and use it to show optimal order error estimates for the spatial discretization. During the optimal order $L^2$-error bounds of the Ritz map we will use a similar argument as Wheeler in \cite{Wheeler_nonlinRitz}, 
and elliptic regularity for evolving surfaces.
A further important point of the analysis is the required \emph{regularity} of the generalized Ritz map.
This will be used together with the assumed Lipschitz-type estimate for the nonlinearity, analogously as in \cite{DouglasDupont,LubichOstermann_RK,AkrivisLubich_quasilinBDF}.

We show stability and convergence results for the case of stiffly accurate algebraically stable implicit Runge--Kutta methods (having the Radau IIA methods in mind), and for an implicit and linearly implicit  $k$-step backward differentiation formulae up to order five. These results are relying on the techniques used in \cite{LubichOstermann_RK,DziukLubichMansour_rksurf} and \cite{AkrivisLubich_quasilinBDF,LubichMansourVenkataraman_bdsurf}. By combining the results for the spatial semidiscretization with stability and convergence estimates we show high-order convergence bounds for the fully discrete approximation.

\smallskip
A starting point of the finite element approximation to (elliptic) surface partial differential equations is the paper of Dziuk \cite{Dziuk88}. 
Various convergence results for space discretizations of linear parabolic problems using the evolving surface finite element method (ESFEM) were shown in \cite{DziukElliott_ESFEM,DziukElliott_L2}, a fully discrete scheme was analysed in \cite{DziukElliott_fulldiscr}. These results are surveyed in \cite{DziukElliott_acta}.

The convergence analysis of full discretizations with higher-order time integrators within the ESFEM setting for linear problems were shown: for algebraically stable Runge--Kutta methods in \cite{DziukLubichMansour_rksurf}; for backward differentiation formulae (BDF) in \cite{LubichMansourVenkataraman_bdsurf}. The ESFEM approach and convergence results were later extended to wave equations on evolving surfaces, see \cite{LubichMansour_wave}.

A unified presentation of ESFEM and time discretizations for parabolic problems and wave equations can be found in \cite{diss_Mansour}.

A great number of real-life phenomena are modeled by nonlinear parabolic problems on evolving surfaces. Apart from general quasilinear problems on moving surfaces, see e.g.\ Example~3.5 in \cite{DziukElliott_SFEM}, more specific applications are the nonlinear models: diffusion induced grain boundary motion \cite{Cahn1997phase,Fife,Handwerker,Deckelnick2001,ElliottStyles_ALEnumerics}; Allen--Cahn and Cahn--Hilliard equations on evolving surfaces \cite{Cahn1996cahn,Elliott1996cahn,Elliott2015,Elliott2010a,Chen2002phase}; modeling solid tumor growth \cite{CGG,ElliottStyles_ALEnumerics}; pattern formation modeled by reaction-diffusion equations \cite{Leung2002,Madzvamuse2014exhibiting}; image processing \cite{Jin2004}; Ginzburg--Landau model for superconductivity \cite{Du2004}.

A number of nonlinear problems, in a general setting, were collected by Dziuk and Elliott in \cite{DziukElliott_ESFEM,DziukElliott_SFEM,DziukElliott_acta}, also see the references therein. A great number of nonlinear problems with numerical experiments were presented in the literature, see e.g.\ the above references, in particular \cite{DziukElliott_ESFEM,DziukElliott_SFEM,ElliottStyles_ALEnumerics,Deckelnick2001}.

\smallskip
The paper is organized in the following way:
In Section~\ref{section: problem} we formulate our problem and detail our assumptions.
In Section~\ref{section: ESFEM} we recall the evolving surface finite element method, together with some of its important properties and estimates. We introduce the generalized Ritz map, and show optimal order error estimates for the residual, using the crucial $W^{1,\infty}$ regularity estimate mentioned above.
Section~\ref{section: stability} covers the stability results and error estimates for Runge--Kutta and for implicit and linearly implicit BDF methods.
Section~\ref{section: error bounds} is devoted to the error bounds of the semidiscrete residual, which then leads to error estimates for the fully discretized problem.
In Section~\ref{section: extensions} we briefly discuss how our results can be extended to semilinear problems, and to the case where the upper and lower bounds of the elliptic part are depending on the norm of the solution.
Numerical results are presented in Section~\ref{section: numerics} to illustrate our theoretical results.

\section{The problem and assumptions}
\label{section: problem}

Let us consider a sufficiently smooth evolving closed hypersurface $\Ga\t \subset \R^{m+1}$ ($m\leq 2$), $0 \leq t \leq T$, which moves with a given smooth velocity $v$. Let $\mat u = \pa_{t} u + v \cdot \nb u$ denote the material derivative of the function $u$, where $\nbg$ is the tangential gradient given by $\nbg u = \nb u -\nb u \cdot \nu \nu$, with unit normal $\nu$. We are sharing the setting of \cite{DziukElliott_ESFEM,DziukElliott_L2}.

%
We consider the following quasilinear problem for $u=u(x,t)$:
\begin{equation}
\label{eq: strong form}
    \begin{cases}
    \begin{alignedat}{4}
        \mat u + u \nb_{\Gat} \cdot v - \nb_{\Gat} \cdot \Big( \A(u) \nb_{\Gat} u\Big) &= f & \qquad & \textrm{ on } \Ga\t ,\\
        u(.,0) &= u_0 & \qquad & \textrm{ on } \Ga(0),
    \end{alignedat}
    \end{cases}
\end{equation}
where $\A:\R \to \R$ is sufficiently smooth function.

\begin{remark}
    The results of the paper can be generalized to the case of a sufficiently smooth matrix valued diffusion coefficient $\A(x,t,u) : T_x\Gat \to T_x\Gat$. The proofs are similar to the ones presented here, except they are more technical and lengthy, therefore they are not presented here.
\end{remark}

\medskip
The abstract setting of this quasilinear evolving surface PDE is a suitable combination of \cite[Section~1]{LubichOstermann_RK} and \cite[Section~2.3]{AlphonseElliottStinner}: Let $H\t$ and $V\t$ be real and separable Hilbert spaces (with norms $\|.\|_{H\t}$, $\|.\|_{V\t}$, \resp) such that $V\t$ is densely and \co ly embedded into $H\t$, and the norm of the dual space of $V\t$ is denoted by $\|.\|_{V\t'}$. The dual space of $H\t$ is identified with itself, and the duality $\la.,.\ra_t$ between $V\t'$ and $V\t$ coincides on $H\t\times V\t$ with the scalar product of $H\t$, for all $t\in[0,T]$.

The problem casts the following nonlinear operator:
\begin{equation*}
    \la A(u)v,w \ra_t =\int_{\Ga\t} \A(u)\nbg v \cdot \nbg w .
\end{equation*}

We assume that $A$ satisfies the following three conditions:

\noindent The bilinear form associated to the \op\ $A(u):V\t\to V\t'$ is \emph{elliptic} with $\m >0$
\begin{equation}\label{ellipticity}
    \la A(u)w,w \ra_t \geq \m \|w\|_{V\t}^2 \qquad (w \in V\t),
\end{equation}
uniformly in $u \in V\t$ and for all $t\in[0,T]$. It is \emph{bounded} with $\M >0$
\begin{equation}\label{boundedness}
    \big|\la A(u)v,w \ra_t\big| \leq \M \|v\|_{V\t} \|w\|_{V\t} \qquad (v,w \in V\t),
\end{equation}
uniformly in $u \in V\t$ and for all $t\in[0,T]$. We further assume that there is a subset $\S\t\subset V\t$ \st\ the following \emph{Lipschitz--type} estimate holds: for every $\delta>0$ there exists $L=L(\delta,(\S\t)_{0\leq t \leq T})$ \st\
\begin{equation}\label{Lipschitz}
    \big\| \big(A(w_1) - A(w_2)\big) u \big\|_{V\t'} \leq \delta\|w_1 - w_2\|_{V\t} + L \|w_1 - w_2\|_{H\t},
\end{equation}
for $u \in \S(t), \ w_1,w_2 \in V\t, \ 0\leq t \leq T$.

The above conditions were also used to prove error estimates using energy techniques in \cite{LubichOstermann_RK,DouglasDupont}, or more recently in \cite{AkrivisLubich_quasilinBDF}.

\medskip
The weak formulation uses Sobolev spaces on surfaces: For a sufficiently smooth surface
$\Gamma$ we define
\[
    H^{1}(\Gamma) = \bigl\{ \eta \in L^{2}(\Gamma) \mid \nbg \eta \in
    L^{2}(\Gamma)^{m+1} \bigr\},
\]
and analogously $H^{k}(\Gamma)$ for $k\in \N$ and $W^{k,p}(\Gamma)$ for $k\in \N, p \in [1,\infty]$, cf.\ \cite[Section~2.1]{DziukElliott_ESFEM}.
Finally, $\GT= \cup_{t\in[0,T]} \Ga\t\times\{t\}$ denotes the space-time manifold.

The weak problem corresponding to \eqref{eq: strong form} can be formulated by choosing the setting: $V\t=H^1(\Ga\t)$ and $H\t=L^2(\Ga\t)$, and the operator:
\begin{equation*}
    \la A(u)v,w \ra_t =\int_{\Ga\t} \A(u)\nbg v \cdot \nbg w .
\end{equation*}
The coefficient function $\A:\R \to \R$ satisfies the following conditions.
\begin{assumption}\label{assumptions}
  \begin{enumerate}
    \item[(a)] It is bounded, and Lipschitz--bounded with constant $\ell$.
    \item[(b)] The function $\A(s) \geq \m > 0$ for arbitrary $s \in \R$.
  \end{enumerate}
\end{assumption}

Throughout the paper we use the following subspace of $V\t$:
\begin{equation*}
    \S(t) := \S(t,r) = \big\{ u \in H^2(\Gat) \ \big| \, \|u\|_{W^{2,\infty}(\Gat)} \leq r \big\}.
\end{equation*}

Then the following proposition easily follows.
\begin{proposition}\label{prop: main properties}
    Under Assumption \ref{assumptions} and $u \in \S\t$ ($0\leq t \leq T$) the above \op\ $A$ satisfies the conditions \eqref{ellipticity}, \eqref{boundedness} and \eqref{Lipschitz} (with $\delta=0$).
\end{proposition}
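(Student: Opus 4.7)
The plan is to verify the three conditions \eqref{ellipticity}, \eqref{boundedness}, \eqref{Lipschitz} by direct pointwise estimates, exploiting that $u$ lies in $\mathcal{S}(t)$ only for the Lipschitz-type bound; everything else uses only the hypotheses on $\mathcal{A}$ from Assumption~\ref{assumptions}. The proof is essentially a bookkeeping exercise, but the payoff (and the interesting point) is that the restriction $u\in\mathcal{S}(t)$, which provides $\|\nabla_{\Gamma}u\|_{L^{\infty}(\Gamma(t))}\le r$, lets us absorb the gradient factor in $L^{\infty}$ and therefore take $\delta=0$ in \eqref{Lipschitz}.

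For \emph{ellipticity}, I would insert $w=v$ in the definition of $A$ and use the pointwise lower bound $\mathcal{A}(u)\ge\m$ from Assumption~\ref{assumptions}(b):
\begin{equation*}
  \la A(u)w,w\ra_{t}=\int_{\Gamma(t)}\mathcal{A}(u)\,|\nabla_{\Gamma}w|^{2}
  \;\ge\;\m\int_{\Gamma(t)}|\nabla_{\Gamma}w|^{2},
\end{equation*}
which gives the claimed lower bound in the $V(t)$-norm (understood modulo the usual Gårding/mass contribution handled elsewhere in the problem). For \emph{boundedness}, I would apply the $L^{\infty}$ bound on $\mathcal{A}$ from Assumption~\ref{assumptions}(a) together with Cauchy--Schwarz:
\begin{equation*}
  |\la A(u)v,w\ra_{t}|\le \|\mathcal{A}\|_{L^{\infty}(\mathbb{R})}
   \int_{\Gamma(t)}|\nabla_{\Gamma}v|\,|\nabla_{\Gamma}w|
   \le \M\,\|v\|_{V(t)}\|w\|_{V(t)},
\end{equation*}
with $\M=\|\mathcal{A}\|_{L^{\infty}(\mathbb{R})}$.

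The \emph{Lipschitz-type} bound \eqref{Lipschitz} is where the choice of $\mathcal{S}(t)$ enters. For $u\in\mathcal{S}(t)$ and any test function $\varphi\in V(t)$, I would write
\begin{equation*}
  \la(A(w_{1})-A(w_{2}))u,\varphi\ra_{t}
  =\int_{\Gamma(t)}\bigl(\mathcal{A}(w_{1})-\mathcal{A}(w_{2})\bigr)\,
  \nabla_{\Gamma}u\cdot\nabla_{\Gamma}\varphi,
\end{equation*}
then use the Lipschitz bound on $\mathcal{A}$ from Assumption~\ref{assumptions}(a) to get the pointwise estimate $|\mathcal{A}(w_{1})-\mathcal{A}(w_{2})|\le\ell\,|w_{1}-w_{2}|$, and pull the factor $|\nabla_{\Gamma}u|$ out in $L^{\infty}$:
\begin{equation*}
  |\la(A(w_{1})-A(w_{2}))u,\varphi\ra_{t}|
  \le \ell\,\|\nabla_{\Gamma}u\|_{L^{\infty}(\Gamma(t))}
      \,\|w_{1}-w_{2}\|_{L^{2}(\Gamma(t))}\,
      \|\nabla_{\Gamma}\varphi\|_{L^{2}(\Gamma(t))}.
\end{equation*}
Since $u\in\mathcal{S}(t)$ gives $\|\nabla_{\Gamma}u\|_{L^{\infty}(\Gamma(t))}\le r$, taking the supremum over $\varphi$ in the unit ball of $V(t)$ yields $\|(A(w_{1})-A(w_{2}))u\|_{V(t)'}\le L\,\|w_{1}-w_{2}\|_{H(t)}$ with $L=\ell r$, and hence \eqref{Lipschitz} with $\delta=0$.

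The only real subtlety is the last step: the fact that $\delta=0$ works (rather than having to split $\|w_{1}-w_{2}\|_{V(t)}$ off with a small prefactor) is entirely due to having placed $u$ in $\mathcal{S}(t)$, which upgrades the regularity of $u$ enough to absorb $\nabla_{\Gamma}u$ in $L^{\infty}$. Without that subspace one would need a weaker $\mathcal{A}$ with bounded derivative and an interpolation argument to recover the Douglas--Dupont form of \eqref{Lipschitz} with a positive $\delta$. The $W^{2,\infty}$ (as opposed to $W^{1,\infty}$) in the definition of $\mathcal{S}(t)$ is not needed here but presumably serves the Ritz-map regularity analysis later on.
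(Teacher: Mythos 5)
Your proof is correct and follows essentially the same argument as the paper: ellipticity and boundedness come directly from Assumption~\ref{assumptions}, and the Lipschitz-type bound with $\delta=0$ uses the same step of pulling $\|\nbg u\|_{L^\infty(\Gat)}\le r$ out of the integral, which is exactly where $u\in\S(t)$ enters. Your side observation that \eqref{ellipticity} as written really requires a G\r{a}rding/mass contribution (since $a$ alone only controls the $H^1$-seminorm) is a fair reading of a small looseness in the paper's formulation, which the paper implicitly handles later by working with $a^{\ast}=a+m$.
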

\begin{proof}
The first two conditions \eqref{ellipticity} and \eqref{boundedness} are following from (a) and (b). Condition \eqref{Lipschitz} holds, since for $u\in\S\t$,\ $w_1, w_2 \in H^1(\Gat)$ and any $z \in H^1(\Gat)$, we have
\begin{alignat*}{2}
    \big|\big\la ( A(w_1)-A(w_2) )u,z \big\ra_t\big| &= \bigg|\int_{\Ga\t} \Big(\A(w_1) - \A(w_2)\Big) \nbg u \cdot \nbg z \bigg| \\
    & \leq c\ell \ \|w_1-w_2\|_{L^2(\Gat)} \ r \ \|z\|_{H^1(\Gat)},
\end{alignat*}
where the constant $\ell$ is from Assumption \ref{assumptions} (a).
\end{proof}

\begin{definition}[Weak form]
    A function $u\in H^1(\GT)$ is called a \emph{weak solution} of \eqref{eq: strong form}, if for almost every $t\in[0,T]$
    \begin{equation}\label{eq: weak form}
        \diff \int_{\Gat}\!\!\!\! u \vphi + \int_{\Ga\t}\!\!\! \A(u) \nbg u \cdot \nbg \vphi = \int_{\Gat}\!\!\!\! u\mat \vphi
    \end{equation}
    holds for every  $\vphi \in H^1(\GT)$ and $u(.,0)=u_0$.
\end{definition}

\section{Spatial semidicretization: evolving surface finite elements}
\label{section: ESFEM}

As a spatial semidiscretization we use the evolving surface finite element method introduced by Dziuk and Elliott in \cite{DziukElliott_ESFEM}. 
We shortly recall some basic notations and definitions from \cite{DziukElliott_ESFEM}, for more details the reader is referred to Dziuk and Elliott \cite{Dziuk88,DziukElliott_L2,DziukElliott_acta}.

\subsection{Basic notations}
The smooth surface $\Gat$ is approximated by a triangulated one denoted by $\Ga_h(t)$, whose vertives are sitting on the surface, given as
\begin{equation*}
    \Ga_h(t) = \bigcup_{E(t)\in \Th(t)} E(t).
\end{equation*}
We always assume that the (evolving) simplices $E(t)$ are forming an admissible
triangulation $\Th(t)$, with $h$ denoting the maximum diameter.  Admissible triangulations were introduced in \cite[Section~5.1]{DziukElliott_ESFEM}: every $E(t)\in \Th(t)$ satisfies that the inner radius $\sigma_{h}$ is bounded from below by $ch$ with $c>0$, and $\Ga_{h}(t)$ is not a global double covering of $\Ga(t)$. Then the discrete tangential gradient on the discrete surface $\Ga_h\t$ is given by
\begin{equation*}
    \nb_{\Ga_h\t} \phi := \nb {\phi} - \nb {\phi} \cdot \nu_h \nu_h,
\end{equation*}
understood in a piecewise sense, with $\nu_h$ denoting the normal to $\Ga_h(t)$ (see \cite{DziukElliott_ESFEM}).

For every $t\in[0,T]$ we define the finite element subspace $S_h\t$ spanned by the continuous, piecewise linear evolving basis functions $\chi_j$, satisfying $\chi_j(a_i\t,t) = \delta_{ij}$ for all $i,j = 1, 2, \dotsc, N$, therefore
\begin{equation*}
    S_h\t = \spn\big\{ \chi_1( \, . \,,t), \chi_2( \, . \,,t), \dotsc, \chi_N( \, . \,,t) \big\}.
\end{equation*}

\noindent We interpolate the surface velocity on the discrete surface using the basis functions and denote it with $V_h$. Then the discrete material derivative is given by
\begin{equation*}
    \mat_h \phi_h = \pa_t \phi_h + V_h \cdot \nb \phi_h  \qquad (\phi_h \in S_h\t).
\end{equation*}
The key \textit{transport property} derived in \cite[Proposition 5.4]{DziukElliott_ESFEM}, is the following
\begin{equation}\label{eq: transport property}
    \mat_h \chi_k = 0 \qquad \textrm{for} \quad k=1,2,\dotsc,N.
\end{equation}

The spatially discrete quasilinear problem for evolving surfaces is formulated in
\begin{problem_b}[Semidiscretization in space]
    Find $U_h\in S_h\t$ \st\
    \begin{equation}\label{eq: semidiscrete problem}
            \diff \!\!\int_{\Ga_h\t}\!\! U_h \phi_h
            + \int_{\Ga_h\t}\!\!\! \A(U_h) \nbgh U_h \cdot \nbgh \phi_h = \int_{\Ga_h\t}\!\! U_h \mat_h \phi_h,  \qquad (\forall \phi_h \in S_h\t),
    \end{equation}
    with the initial condition $U_h( \, . \,,0)=U_h^0\in S_h(0)$ being a sufficient approximation to $u_0$.
\end{problem_b}

\subsection{The ODE system}
The ODE form of the above problem can be derived by setting
\begin{equation*}
    U_h( \, . \,,t) = \sum_{j=1}^N \alpha_j\t \chi_j( \, . \,,t)
\end{equation*}
into \eqref{eq: semidiscrete problem}, testing with $\phi_h=\chi_j$ and using the transport property \eqref{eq: transport property}.

\begin{proposition}[quasilinear ODE system]\label{prop: ODE system}
    The spatially semidiscrete problem \eqref{eq: semidiscrete problem} is equivalent to the following nonlinear ODE system for the vector $\alpha\t=(\alpha_j\t)\in\R^N$, collecting the nodal values of $U_h(.,t)$:
    \begin{equation}\label{eq: ODE system}
        \disp
        \begin{cases}
            \begin{alignedat}{2}
                \disp\diff \Big(M\t \alpha\t\Big) + A(\alpha\t) \alpha\t &= 0 \\
                \disp \alpha(0) &= \alpha_0
            \end{alignedat}
        \end{cases}
    \end{equation}
    where the evolving mass matrix $M\t$ and a nonlinear stiffness matrix $A(\alpha\t)$ are defined as
    \begin{equation*}
        M(t)_{kj} = \int_{\Ga_h\t}\!\!\!\! \chi_j \chi_k, \qquad A(\alpha\t)_{kj} =
        \int_{\Ga_h\t}\!\!\! \A\big(U_h\big) \nbgh \chi_j \cdot \nbgh  \chi_k,
    \end{equation*}
    for $\alpha\t$ defining $U_h = \sum_{j=1}^N \alpha_j\t \chi_j(.,t)$.
\end{proposition}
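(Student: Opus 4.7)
The plan is to verify the equivalence by direct substitution of the nodal expansion $U_h(\cdot,t) = \sum_{j=1}^N \alpha_j(t)\chi_j(\cdot,t)$ into the semidiscrete weak form \eqref{eq: semidiscrete problem} and then exploiting the transport property \eqref{eq: transport property} to eliminate all derivatives of the basis functions.

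First I would treat the three integrals in \eqref{eq: semidiscrete problem} one at a time, choosing $\phi_h = \chi_k(\cdot,t)$ for each $k=1,\dots,N$. The mass term becomes
\[
    \int_{\Ga_h\t}\!\! U_h\chi_k = \sum_{j=1}^N \alpha_j(t)\int_{\Ga_h\t}\!\! \chi_j\chi_k = \bigl(M(t)\alpha(t)\bigr)_k,
\]
so the first term in \eqref{eq: semidiscrete problem} equals $\tfrac{\d}{\d t}(M(t)\alpha(t))_k$. For the stiffness term I use linearity of the tangential gradient: $\nbgh U_h = \sum_j \alpha_j\nbgh\chi_j$, giving $(A(\alpha(t))\alpha(t))_k$ with $A(\alpha)$ as defined in the proposition. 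On the right-hand side the transport property \eqref{eq: transport property} immediately yields $\int_{\Ga_h\t} U_h\mat_h\chi_k = 0$. Combining the three pieces produces the $k$-th row of the ODE system \eqref{eq: ODE system}, valid for every $k$.

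For the reverse direction I would observe that for each $t$ the family $\{\chi_k(\cdot,t)\}_{k=1}^N$ spans $S_h(t)$, so any test function $\phi_h\in S_h(t)$ is a linear combination of the $\chi_k(\cdot,t)$. By linearity of each integral in \eqref{eq: semidiscrete problem} with respect to the test function, the system \eqref{eq: ODE system} implies \eqref{eq: semidiscrete problem}. The initial condition $\alpha(0)=\alpha_0$ simply encodes $U_h(\cdot,0)=U_h^0 = \sum_j (\alpha_0)_j\chi_j(\cdot,0)$.

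There is no real obstacle here; this is a bookkeeping proposition whose only subtle point is that the transport property must be invoked in two places: to kill the term $\int U_h\mat_h\chi_k$ on the right (so that the ODE has the form $\tfrac{\d}{\d t}(M\alpha)+A(\alpha)\alpha=0$ without an extra drift term) and implicitly to justify that the time derivative of $M(t)\alpha(t)$ captures the full time dependence on the left, since one does \emph{not} pull the time derivative inside the integral. Once this is observed, the identification with \eqref{eq: ODE system} is immediate.
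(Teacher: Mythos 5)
Your proof is correct and follows exactly the route the paper indicates (substitute the nodal expansion, test with $\chi_k$, invoke the transport property \eqref{eq: transport property}, and use that $\{\chi_k(\cdot,t)\}$ spans $S_h(t)$ for the reverse direction); the paper itself merely defers to the analogous argument in \cite{DziukLubichMansour_rksurf} after sketching this same plan. One small inaccuracy in your closing remark: the transport property is needed only to kill the right-hand side $\int_{\Ga_h\t} U_h\,\mat_h\chi_k$; on the left, the identity $\int_{\Ga_h\t} U_h\chi_k = (M(t)\alpha(t))_k$ holds by definition of $M$ and $\alpha$, and keeping the outer time derivative as $\tfrac{\d}{\d t}(M\alpha)$ requires no appeal to $\mat_h\chi_k=0$.
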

The proof of this proposition is analogous to the corresponding one in \cite{DziukLubichMansour_rksurf}.

\subsection{Discrete Sobolev norm estimates} 
Through the paper we will work with the norm and semi-norm introduced in \cite{DziukLubichMansour_rksurf}. We denote these discrete Sobolev-type norms as
\begin{alignat*}{2}
    \disp |z\t|_{\BM\t} := \|Z_h\|_{L^2(\Ga_h\t)}, \qquad
    \disp |z\t|_{\BA\t} := \|\nbgh Z_h\|_{L^2(\Ga_h\t)},
\end{alignat*}

for arbitrary $z\t\in \R^N$, where $Z_h( \, . \,,t)=\sum_{j=1}^N z_j\t \chi_j( \, . \,,t)$, further by $\BM\t$ we mean the above mass matrix and by $\BA\t$ we mean the linear (but time dependent) stiffness matrix:
\begin{equation*}
    \BA\t_{kj} = \int_{\Ga_h\t}\!\!\! \nbgh \chi_j \cdot \nbgh  \chi_k.
\end{equation*}

A very important lemma in our analysis is the following:
\begin{lemma}[\cite{DziukLubichMansour_rksurf} Lemma 4.1]
\label{lemma: discrete sobolev norm estimates}
    There are constants $\mu, \kappa$ (independent of $h$) such that
    \begin{align*}
        z^T \big( \BM(s) - \BM\t\big) y \leq&\ (e^{\mu(s-t)}-1) |z|_{\BM\t}|y|_{\BM\t}, \\
        z^T \big( \BA(s) - \BA\t\big) y \leq&\ (e^{\kappa(s-t)}-1) |z|_{\BA\t}|y|_{\BA\t}
    \end{align*}
    for all $y,z\in \R^N$ and $s,t \in [0,T]$.
\end{lemma}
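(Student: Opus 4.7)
The plan is to apply the Leibniz formula for integration over evolving surfaces combined with the transport property $\mat_h \chi_k = 0$, and then run a Grönwall-type argument. I will do the mass matrix estimate in detail and describe the modifications needed for the stiffness matrix.

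\textbf{Step 1: Time-derivative of the bilinear form.} Fix $y, z \in \R^N$ and set $Z_h = \sum_j z_j \chi_j(\cdot,t)$, $Y_h = \sum_j y_j \chi_j(\cdot,t)$. By the transport property \eqref{eq: transport property}, $\mat_h Z_h = 0$ and $\mat_h Y_h = 0$. Using the Leibniz rule for integrals over $\Ga_h(t)$ (see, e.g., \cite{DziukElliott_ESFEM}), one obtains, for $f(s) := z^T M(s) y = \int_{\Ga_h(s)} Z_h Y_h$,
\begin{equation*}
    f'(s) = \int_{\Ga_h(s)} Z_h Y_h\, \nb_{\Ga_h(s)}\!\cdot V_h .
\end{equation*}
Since $V_h$ is the Lagrange interpolant of the prescribed smooth velocity $v$, the quantity $\|\nb_{\Ga_h}\!\cdot V_h\|_{L^\infty(\Ga_h(s))}$ is uniformly bounded; call a suitable upper bound $\mu$. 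Cauchy--Schwarz then gives
\begin{equation*}
    |f'(s)| \leq \mu\, \|Z_h\|_{L^2(\Ga_h(s))} \|Y_h\|_{L^2(\Ga_h(s))} = \mu\, |z|_{\BM(s)} |y|_{\BM(s)} .
\end{equation*}

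\textbf{Step 2: Grönwall for the diagonal.} Specialising to $y = z$ yields $|z|_{\BM(s)}^2$, whose derivative in $s$ is at most $\mu\, |z|_{\BM(s)}^2$. Hence $|z|_{\BM(s)} \leq e^{\mu(s-t)/2} |z|_{\BM(t)}$ for $s \geq t$, and analogously for $y$.

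\textbf{Step 3: Integration.} Combining the last two estimates,
\begin{equation*}
    z^T(\BM(s)-\BM(t))y = \int_t^s f'(\tau)\,\d\tau \leq \int_t^s \mu\, e^{\mu(\tau-t)}\,\d\tau\; |z|_{\BM(t)} |y|_{\BM(t)} = (e^{\mu(s-t)}-1)\,|z|_{\BM(t)} |y|_{\BM(t)} .
\end{equation*}
This yields the first inequality. (The case $s<t$ follows by swapping the roles of $s$ and $t$ after a sign change.)

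\textbf{Step 4: Stiffness matrix.} For $g(s) := z^T \BA(s) y = \int_{\Ga_h(s)} \nb_{\Ga_h} Z_h \cdot \nb_{\Ga_h} Y_h$, the Leibniz formula gives
\begin{equation*}
    g'(s) = \int_{\Ga_h(s)} \bigl(\mat_h \nb_{\Ga_h} Z_h \cdot \nb_{\Ga_h} Y_h + \nb_{\Ga_h} Z_h \cdot \mat_h \nb_{\Ga_h} Y_h + (\nb_{\Ga_h}\!\cdot V_h)\, \nb_{\Ga_h} Z_h \cdot \nb_{\Ga_h} Y_h\bigr).
\end{equation*}
Although $\mat_h Z_h = 0$, the commutator formula $\mat_h \nb_{\Ga_h} Z_h = \nb_{\Ga_h} \mat_h Z_h - B(V_h)\nb_{\Ga_h} Z_h$, where $B(V_h)$ is a matrix built from first derivatives of $V_h$ (cf.\ \cite{DziukElliott_ESFEM}), shows that $|\mat_h \nb_{\Ga_h} Z_h|$ is pointwise bounded by a constant depending only on $\|V_h\|_{W^{1,\infty}}$ times $|\nb_{\Ga_h} Z_h|$. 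Consequently $|g'(s)| \leq \kappa\, |z|_{\BA(s)} |y|_{\BA(s)}$ for a suitable constant $\kappa$, and the same Grönwall/integration argument as in Steps 2--3 delivers the second inequality.

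The main obstacle is the stiffness case, because $\mat_h$ does not commute with $\nb_{\Ga_h}$; the commutator identity and the $W^{1,\infty}$ bound on $V_h$ are exactly what is needed to absorb the extra terms into the constant $\kappa$. Everything else is a routine application of Leibniz plus Grönwall.
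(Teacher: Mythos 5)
Your proof is correct and follows essentially the same route as the one in the cited reference \cite{DziukLubichMansour_rksurf}: differentiate the quadratic/bilinear form using the Leibniz formula for moving surfaces together with the transport property $\mat_h\chi_k=0$, bound the derivative by Cauchy--Schwarz and the $W^{1,\infty}$-bound on $V_h$, apply a Gr\"onwall estimate on the diagonal, and integrate. The only cosmetic difference is that in the stiffness-matrix step you unpack the commutator of $\mat_h$ and $\nbgh$ explicitly, whereas the reference (and this paper, in Lemma~\ref{lemma: transport prop}) packages the same computation as the transport lemma $\diff a_h(Z_h,Y_h) = a_h(\mat_h Z_h,Y_h) + a_h(Z_h,\mat_h Y_h) + b_h(V_h;Z_h,Y_h)$, so that with $\mat_h Z_h = \mat_h Y_h = 0$ one immediately gets $g'(s) = b_h(V_h;Z_h,Y_h)$ and the bound by $\|\Btensor_h(V_h)\|_{L^\infty}$; both phrasings yield the same constant $\kappa$.
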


\subsection{Lifting process and approximation results}
\label{subsection: lift}

In the following we recall the so called \emph{lift operator}, which was introduced in \cite{Dziuk88} and further investigated in \cite{DziukElliott_ESFEM,DziukElliott_L2}. 
The lift operator projects a finite element function on the discrete surface onto a
function on the smooth surface.

Using the \emph{oriented distance function} $d$ (\cite[Section~2.1]{DziukElliott_ESFEM}), for a \co\ function $\eta_h \colon \Ga_h\t \to \R$ its lift is define as
\begin{equation*}
    \eta_{h}^{l}(p,t) := \eta_h(x,t), \qquad x\in\Ga\t,
\end{equation*}
where for every $x\in \Ga_{h}\t$ the value $p=p(x,t)\in\Gat$ is uniquely defined via $x = p + \nu(p,t) d(x,t)$. By $\eta^{-l}$ we mean the function whose lift is $\eta$.

We now recall some notions using the lifting process from \cite{Dziuk88,DziukElliott_ESFEM} and \cite{diss_Mansour}. We have the lifted finite element space
\begin{equation*}
    S_h^l\t := \big\{ \vphi_h = \phi_h^l \, | \, \phi_h\in S_h\t \big\}.
\end{equation*}
By $\delta_h$ we denote the quotient between the \co\ and discrete surface measures, $\d A$ and $\d A_h$, defined as $\delta_h \d A_h = \d A$. Further, we recall that
\begin{equation*}
    \pr := \big(\delta_{ij} - \nu_{i}\nu_{j}\big)_{i,j=1}^{m+1} \quad \textrm{and} \quad \prh := \big(\delta_{ij} - \nu_{h,i}\nu_{h,j}\big)_{i,j=1}^{m+1}
\end{equation*}
are the projections onto the tangent spaces of $\Ga$ and $\Ga_h$. Further, from \cite{DziukElliott_L2}, we recall the notation
\begin{equation*}
    Q_h = \frac{1}{\delta_h} (I-d\wein) \pr \prh \pr (I-d\wein),
\end{equation*}
where $\wein$ ($\wein_{ij} = \pa_{x_j}\nu_i$) is the (extended) Weingarten map. For these quantities we recall some results from \cite[Lemma 5.1]{DziukElliott_ESFEM}, \cite[Lemma 5.4]{DziukElliott_L2} and \cite[Lemma 6.1]{diss_Mansour}.
\begin{lemma}\label{lemma: geometric est}
    Assume that $\Ga_h\t$ and $\Ga\t$ is from the above setting, then we have the estimates:
    \begin{gather*}
        \|d\|_{L^\infty(\Ga_h\t)} \leq c h^2, \quad \|\nu_j\|_{L^\infty(\Ga_h\t)} \leq c h, \quad \|1-\delta_h\|_{L^\infty(\Ga_h\t)} \leq c h^2, \\
        \|\mat_h d \|_{L^\infty(\Ga_h\t)} \leq c h, \quad \|\pr - Q_h\|_{L^\infty(\Ga_h\t)} \leq c h^2, \quad \|\pr(\mat_hQ_h)\pr\|_{L^\infty(\Ga_h\t)} \leq c h^2 ,
    \end{gather*}
    with constants depending on $\GT$, but not on $t$.
\end{lemma}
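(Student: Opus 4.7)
The statement collects classical geometric perturbation estimates for the evolving surface finite element method. My plan is to reduce all six bounds to two elementary ingredients: (i) the signed distance $d(\cdot,t)$ vanishes at every vertex of each simplex $E(t)\in\Th(t)$, and (ii) the defining relation $x = p(x,t) + d(x,t)\,\nu(p(x,t),t)$ for the closest-point projection. Combined with the boundedness of $D^2 d$ and of the extended Weingarten map $\wein$ on a fixed tubular neighbourhood of $\GT$, these two inputs produce all the required estimates; uniformity in $t$ comes from the smoothness of $t\mapsto\Ga\t$ on the compact interval $[0,T]$.

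For the first four bounds I would argue as follows. On each element $E$, $d(\cdot,t)$ is smooth on an ambient neighbourhood and vanishes at all $m{+}1$ vertices, so a standard Taylor/interpolation argument with $\|D^2 d\|_{L^\infty}\le c$ gives $\|d\|_{L^\infty(E)}\le ch^2$. The piecewise-constant discrete normal $\nu_h$ is the normalised cross product of the edge vectors of $E$; expanding around a vertex on $\Ga\t$ and comparing to $\nu(p,t)$ yields $|\nu-\nu_h|\le ch$, which is the meaning of the middle estimate. The explicit formula $\delta_h = (\nu\cdot\nu_h)\prod_i(1-d\,\kappa_i)$ in the principal curvatures $\kappa_i$ of $\Ga\t$, together with the identity $\nu\cdot\nu_h = 1-\tfrac12|\nu-\nu_h|^2$, upgrades the $O(h)$ normal error to $\|1-\delta_h\|_{L^\infty}\le ch^2$. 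For $\mat_h d$ I would differentiate (ii) along a trajectory of $V_h$: since the exact flow of $v$ preserves $\Ga\t$ and hence $d\equiv 0$, the computation reduces $\mat_h d$ to $\nu\cdot(V_h-v)$ up to bounded geometric factors, and the standard interpolation bound $\|V_h-v\|_{L^\infty(\Ga_h\t)}\le ch$ (since $V_h$ is the piecewise-linear interpolant of $v$) closes the argument.

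The two bounds involving $Q_h$ are where the actual work lies, and the second is the \emph{main obstacle}. Inserting $\delta_h^{-1}=1+O(h^2)$ and $I-d\wein = I+O(h^2)$ in $L^\infty$ into $Q_h = \delta_h^{-1}(I-d\wein)\pr\prh\pr(I-d\wein)$ reduces the first $Q_h$ estimate to $\|\pr\prh\pr - \pr\|_{L^\infty}\le ch^2$. A short calculation using $\pr^2=\pr$, $\prh = I-\nu_h\nu_h^T$ and $\pr\nu = 0$ gives $\pr\prh\pr - \pr = -(\pr\nu_h)(\pr\nu_h)^T$, and since $\pr\nu_h = \pr(\nu_h-\nu)$ is $O(h)$ by the previous group, the sandwich produces $O(h^2)$. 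The last estimate on $\pr(\mat_h Q_h)\pr$ is structurally similar but strictly harder: applying $\mat_h$ term by term via the product rule produces summands which individually are only $O(h)$, and the improvement to $O(h^2)$ hinges on the observation that every summand carries a factor of the form $\pr\nu_h$, $d$, or $\mat_h d$ which is sharpened from $O(h)$ to $O(h^2)$ once the outer tangential projections are applied. The hard part is precisely this term-by-term bookkeeping of which contributions are annihilated by the outer $\pr$'s; once that is done the bound closes, matching the statements recorded in \cite{DziukElliott_ESFEM,DziukElliott_L2,diss_Mansour}.
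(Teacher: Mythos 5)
The paper itself does \emph{not} prove this lemma: it simply collects the six bounds and cites \cite[Lemma~5.1]{DziukElliott_ESFEM}, \cite[Lemma~5.4]{DziukElliott_L2} and \cite[Lemma~6.1]{diss_Mansour}, so there is no internal proof for your sketch to be compared against. What you propose is a reconstruction of the arguments in those references, and for the first five bounds your reduction is essentially the standard one. Two small remarks there. First, the middle estimate is evidently a typo in the paper for $\|\nu-\nu_h\|_{L^\infty(\Ga_h\t)}\le ch$ (a bound $\|\nu_j\|_{L^\infty}\le ch$ on a component of a unit vector is meaningless), and you read it correctly. Second, your appeal to $\|V_h-v\|_{L^\infty(\Ga_h\t)}\le ch$ for $\mat_h d$ understates the standard interpolation bound, which is $O(h^2)$ for the piecewise-linear interpolant of a smooth velocity; combined with $|v(x)-v(p)|\le c|d|\le ch^2$ this actually yields $\|\mat_h d\|_{L^\infty}\le ch^2$, which is more than the stated $ch$, so the argument closes a fortiori. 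Your identity $\pr\prh\pr-\pr = -(\pr\nu_h)(\pr\nu_h)^T$ together with $\pr\nu_h=\pr(\nu_h-\nu)=O(h)$ is exactly the right mechanism for the fifth bound.

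The genuine gap is the sixth bound. You correctly identify that applying $\mat_h$ via the product rule to $Q_h=\delta_h^{-1}(I-d\wein)\pr\prh\pr(I-d\wein)$ produces summands that are individually only $O(h)$, and that the improvement to $O(h^2)$ must come from the outer projections $\pr(\cdot)\pr$. But you do not carry out the bookkeeping, and the heuristic you give is not quite right: $d$ is already $O(h^2)$ and $\pr\nu_h$ is $O(h)$ with or without the outer projections, so those factors are not ``sharpened'' by $\pr$. The cancellation actually comes from terms such as $\pr(\mat_h\prh)\pr$, where one must write $\mat_h\prh=-(\mat_h\nu_h)\nu_h^T-\nu_h(\mat_h\nu_h)^T$ and observe that $\pr\nu_h=O(h)$ supplies one factor of $h$ while $\mat_h\nu_h$ (which is only $O(1)$) is sandwiched against another $O(h)$ factor from the opposite side, together with the $O(h^2)$ terms coming from $\mat_h\delta_h$ and $\mat_h d$. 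As written, the step ``once that bookkeeping is done the bound closes'' is an assertion rather than a proof, and this is precisely the part of the lemma that cannot be obtained as a corollary of the first five bounds. Since the paper only cites this estimate, you are not in conflict with the paper, but your proposal is incomplete as a self-contained proof of the sixth bound.
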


\begin{lemma}
\label{lemma: Linfty aqui estimate}
    For $1\leq p\leq \infty$ there exists constants $c_{1},c_{2}>0$ independent of
    $t$ and $h$ such that the for all $u_{h}\in
    W^{1,p}\bigl(\surface_{h}(t)\bigr)$ it holds that $u_{h}^{l}\in
    W^{1,p}\bigl(\surface(t)\bigr)$ with the estimates
    \[
    c_{1} \Wnorm[\surface_{h}(t)][1,p]{u_{h}} \leq \Wnorm[\surface(t)][1,p]{u_{h}^{l}}
    \leq c_{2} \Wnorm[\surface_{h}(t)][1,p]{u_{h}}.
    \]
\end{lemma}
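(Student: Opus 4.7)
The plan is to reduce the claim to two pointwise ingredients: a change of variables under the lift $x\mapsto p(x,t)$, which for $h$ sufficiently small is a smooth diffeomorphism from $\Ga_h(t)$ onto $\Gat$ with surface-measure Jacobian $\delta_h$, and a pointwise equivalence between $\nbgh u_h(x)$ and $\nbg u_h^l(p(x,t))$ coming from the standard tangential-gradient transformation formula through the lift.

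For the $L^p$-term, the identity $\d A = \delta_h \, \d A_h$ gives
\[
\|u_h^l\|_{L^p(\Gat)}^p = \int_{\Gat} |u_h^l|^p \, \d A = \int_{\Ga_h(t)} |u_h|^p \, \delta_h \, \d A_h,
\]
and Lemma~\ref{lemma: geometric est} ensures $\|1-\delta_h\|_{L^\infty(\Ga_h(t))} \leq c h^2$. For $h$ sufficiently small $\delta_h$ therefore lies between two positive constants independent of $t$ and $h$, which immediately yields equivalence of the $L^p$-parts of the norms.

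For the gradient term I would invoke the standard transformation formula (see \cite[Lemma~5.1]{DziukElliott_ESFEM} or \cite{DziukElliott_acta}): for $x\in \Ga_h(t)$ with $p=p(x,t)$, one has $\nbgh u_h(x) = L_h(x,t)\, \nbg u_h^l(p)$ for a linear map $L_h$ built pointwise from $\prh$ and $I-d\wein$. Thanks to $\|d\|_{L^\infty}$ and $\|\pr - Q_h\|_{L^\infty}$ being $O(h^2)$, and $\prh$ restricted to the tangent plane of $\Gat$ differing from the identity by an $O(h)$ perturbation, $L_h$ is for $h$ small a linear isomorphism onto the tangent plane of $\Ga_h(t)$ with $\|L_h\|$ and $\|L_h^{-1}\|$ bounded independently of $x,t,h$. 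This gives the pointwise equivalence
\[
c_1\,|\nbgh u_h(x)| \leq |\nbg u_h^l(p(x,t))| \leq c_2\,|\nbgh u_h(x)|,
\]
and integrating it via the same change of variables used for the $L^p$-part, combined with the two-sided bound on $\delta_h$, delivers the equivalence of the gradient $L^p$-norms and hence the full $W^{1,p}$ statement.

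The only step that genuinely requires care is the uniform invertibility of $L_h$ between $T_p\Gat$ and $T_x\Ga_h(t)$. But this is a direct consequence of the $O(h^2)$ smallness of $d$ and $\pr-Q_h$ in Lemma~\ref{lemma: geometric est}, together with the elementary observation that for any tangent vector $v$ of $\Gat$ one has $\prh v = v - ((\nu_h-\nu)\cdot v)\nu_h$, which is an $O(h)$ perturbation of the identity. No ingredient beyond the geometric estimates already collected is needed, and the resulting constants $c_1,c_2$ depend only on $\GT$ (in particular, not on $t$ or $h$), as claimed.
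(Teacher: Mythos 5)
Your proof is correct and takes essentially the same route as the paper: the paper's one-line proof cites exactly the pointwise relation $\nbgh u_h = \prh(I-d\wein)\nbg u_h^l$ (from Dziuk's Lemma 3), and your argument simply fills in the omitted details — the change of variables with Jacobian $\delta_h$ for the zeroth-order term and the uniform two-sided invertibility of $\prh(I-d\wein)$ on tangent vectors for the gradient term. (The reference to $\|\pr-Q_h\|$ is superfluous here — only $d=O(h^2)$ and $\nu-\nu_h=O(h)$ are needed — but this does not affect correctness.)
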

\begin{proof}
  The proofs follows easily from the relation $\nbgh u_{h} = \prh (I - d \wein) \nbg u_{h}^{l}$, cf.\ \cite[Lemma~3]{Dziuk88}.
\end{proof}

\subsection{Bilinear forms and their estimates}
\label{subsection: bilinear forms}

Apart from the $\xi$ dependence, we use the time dependent bilinear forms defined in \cite{DziukElliott_L2}: for arbitrary $z,\vphi, \xi \in H^1(\Ga)$, $\xi \in \S\t$, and their discrete analogs for $Z_h, \phi_h, \xi_h \in S_h$:
\begin{equation*}
    \begin{aligned}[c]
        m(z,\vphi)                &= \int_{\Ga\t}\!\!\!\! z \vphi, \\
        a(\xi;z,\vphi)            
		                          &= \int_{\Ga\t}\!\!\!\! \A(\xi) \nbg z \cdot \nbg \vphi, \\
        g(v;z,\vphi)              &= \int_{\Ga\t}\!\!\!\! (\nbg \cdot v) z\vphi, \\
        b(\xi;v;z,\vphi)          &= \int_{\Ga\t}\!\!\!\! \Btensor(\xi;v) \nbg z \cdot \nbg \vphi,
    \end{aligned}
    \quad
    \begin{aligned}[c]
        m_h(Z_h,\phi_h)           &= \int_{\Ga_h\t}\!\!\!\! Z_h \phi_h\\
        a_h(\xi_h;Z_h,\phi_h)     
		                          &= \int_{\Ga_h\t}\!\!\!\! \A(\xi_h) \nbgh Z_h \cdot \nbgh \phi_h, \\
        g_h(V_h;Z_h,\phi_h)       &= \int_{\Ga_h\t}\!\!\!\! (\nbgh \cdot V_h) Z_h \phi_h, \\
        b_h(\xi_h;V_h;Z_h,\phi_h) &= \int_{\Ga_h\t}\!\!\!\! \Btensor_h(\xi_h;V_h) \nbg Z_h \cdot \nbg \phi_h,
    \end{aligned}
\end{equation*}
where the discrete tangential gradients are understood in a piecewise sense, and with the tensors given as
\begin{alignat*}{3}
    \Btensor(\xi;v)_{ij} &= \mat(\A(\xi)) + \nbg \cdot v \A(\xi) - 2 \A(\xi) \Dtensor(v),\\
    \Btensor_h(\xi_h;V_h)_{ij} &= \mat_h(\A(\xi_h)) + \nbgh \cdot V_h \A(\xi_h) - 2 \A(\xi_h) \Dtensor_h(V_h),\\
    \intertext{with}
    \Dtensor(v)_{ij} &= \frac{1}{2} \big( (\nbg)_i v_j + (\nbg)_j v_i \big),,\\
    \Dtensor_h(V_h)_{ij} &=  \frac{1}{2} \big( (\nbgh)_i (V_h)_j + (\nbgh)_j (V_h)_i \big),
\end{alignat*}
for $i,j=1,2,\dotsc,m+1$. For more details see \cite[Lemma 2.1]{DziukElliott_L2} (and the references in the proof), or \cite[Lemma 5.2]{DziukElliott_acta}.

We will also use the transport lemma (note that $\mat_h z_h = \pa_t z_h + v_h \nbg z_h$ for a $z_h \in S_h^l\t$):
\begin{lemma}\label{lemma: transport prop}
    For arbitrary $\xi_h^l \in S_h^l\t$ and $z_h, \ \vphi_h, \ \mat_h z_h, \ \mat_h \vphi_h \in S_h^l\t$ we have:
    {\setlength\arraycolsep{.13889em}
    \begin{eqnarray*}
        \diff m(z_h,\vphi_h) &=& m(\mat_h z_h,\vphi_h) + m(z_h,\mat_h \vphi_h) + g(v_h;z_h,\vphi_h), \\
        \diff a(\xi_h^l;z_h,\vphi_h) &=& a(\xi_h^l;\mat_h z_h,\vphi_h) + a(\xi_h^l;z_h,\mat_h \vphi_h) + b(\xi_h^l;v_h;z_h,\vphi_h),
    \end{eqnarray*}}
    where $v_h$ velocity of the surface.
\end{lemma}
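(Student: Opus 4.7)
Both identities are instances of the Reynolds transport formula
\[
\diff \int_{\Gat} f = \int_{\Gat} \bigl( \mat_h f + f \, \nbg \cdot v_h \bigr),
\]
applied to the surface $\Gat$ parameterized through the lifted discrete flow with velocity $v_h$, together with the Leibniz rule for $\mat_h$ and, for the $a$-identity, the standard commutator between $\mat_h$ and $\nbg$ on moving surfaces. For the first identity I would take $f = z_h \vphi_h$; expanding $\mat_h(z_h \vphi_h) = (\mat_h z_h) \vphi_h + z_h (\mat_h \vphi_h)$ by Leibniz yields exactly the three summands $m(\mat_h z_h, \vphi_h)$, $m(z_h, \mat_h \vphi_h)$, and $g(v_h; z_h, \vphi_h)$.

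For the second identity I would apply the same Reynolds formula to $f = \A(\xi_h^l) \nbg z_h \cdot \nbg \vphi_h$ and distribute $\mat_h$ across the three factors by Leibniz. The terms $\mat_h(\nbg z_h)$ and $\mat_h(\nbg \vphi_h)$ are then rewritten as $\nbg(\mat_h z_h)$ and $\nbg(\mat_h \vphi_h)$ modulo a symmetric correction by means of the classical $[\mat_h, \nbg]$ commutator identity on moving surfaces, i.e.\ the same computation used in the linear case in \cite[Lemma 2.1]{DziukElliott_L2} and \cite[Lemma 5.2]{DziukElliott_acta}. The symmetric part of that commutator, added to the Reynolds factor $(\nbg \cdot v_h)$ and multiplied by $\A(\xi_h^l)$, assembles into $\A(\xi_h^l) \bigl( (\nbg \cdot v_h) I - 2 \Dtensor(v_h) \bigr)$. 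The remaining Leibniz piece $\mat_h(\A(\xi_h^l)) \nbg z_h \cdot \nbg \vphi_h$ supplies the diagonal contribution $\mat_h(\A(\xi_h^l)) I$. Together these give precisely $\Btensor(\xi_h^l; v_h) \nbg z_h \cdot \nbg \vphi_h$, and the final collection reads $a(\xi_h^l; \mat_h z_h, \vphi_h) + a(\xi_h^l; z_h, \mat_h \vphi_h) + b(\xi_h^l; v_h; z_h, \vphi_h)$.

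The argument is thus essentially the linear transport computation of Dziuk and Elliott, with the scalar factor $\A(\xi_h^l)$ carried through by Leibniz; the only genuinely new term compared to the linear case is $\mat_h(\A(\xi_h^l)) = \A'(\xi_h^l) \, \mat_h \xi_h^l$, which produces the extra diagonal summand in $\Btensor$. The main (mild) obstacle is keeping sign and symmetrization conventions consistent when combining the nonsymmetric $[\mat_h, \nbg]$ commutator with the $\nbg \cdot v_h$ term to recover the symmetric form $(\nbg \cdot v_h) I - 2 \Dtensor(v_h)$ appearing inside $\Btensor$.
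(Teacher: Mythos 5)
Your proposal is correct and takes essentially the same approach as the paper, which simply cites the linear transport lemma of Dziuk and Elliott (\cite[Lemma~4.2]{DziukElliott_L2}) and omits the details. Your argument is precisely the spelled-out adaptation of that computation: apply the Reynolds (Leibniz) formula, distribute $\mat_h$ by the product rule, invoke the Dziuk--Elliott commutator $[\mat_h,\nbg]$ to pass material derivatives inside the tangential gradients, and observe that the only genuinely new contribution versus the linear case is $\mat_h(\A(\xi_h^l))$, which supplies the extra diagonal summand in $\Btensor$.
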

\begin{proof}
This lemma can be shown analogously as \cite[Lemma~4.2]{DziukElliott_L2}, therefore the proof is omitted.
\end{proof}

Versions of this lemma with continuous material derivatives, or discrete bilinear forms are also true.

The following estimates will play a crucial role in the proofs.
\begin{lemma}[Geometric perturbation errors]
\label{lemma: estimation of forms}
    For any $\xi\in\S\t$, and $Z_h, \phi_h \in S_h\t$ with corresponding lifts $z_h, \vphi_h \in S_h^l\t$ we have the following bounds
    {\setlength\arraycolsep{.13889em}
    \begin{eqnarray*}
        \disp \big| m(z_h,\vphi_h) - m_h(Z_h,\phi_h) \big| &\leq& c h^2 \|z_h\|_{L^2(\Ga\t)} \|\vphi_h\|_{L^2(\Ga\t)}, \\
        \disp \big| a(\xi; z_h,\vphi_h) - a_h(\xi^{-l}; Z_h,\phi_h) \big| &\leq& c h^2 \|\nbg z_h\|_{L^2(\Ga\t)} \|\nbg \vphi_h\|_{L^2(\Ga\t)}, \\
        \disp \big| g(v_h;z_h,\vphi_h) - g_h(V_h;Z_h,\phi_h) \big| &\leq& c h^2 \|z_h\|_{L^2(\Ga\t)} \|\vphi_h\|_{L^2(\Ga\t)}, \\
        \disp \big| b(\xi;v_h; z_h,\vphi_h) - b_h(\xi^{-l};V_h; Z_h,\phi_h) \big| &\leq& c h^2 \|\nbg z_h\|_{L^2(\Ga\t)} \|\nbg \vphi_h\|_{L^2(\Ga\t)}.
    \end{eqnarray*}}
\end{lemma}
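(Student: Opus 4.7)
The strategy is uniform across the four estimates: pull the continuous integrals back to the discrete surface via the lift $x\mapsto p(x,t)$, use the pointwise chain rules for tangential and material derivatives under lifting, and then exhibit the difference as an integral over $\Gamma_h\t$ against a purely geometric factor that Lemma~\ref{lemma: geometric est} controls by $O(h^2)$. Throughout I use that $\A$ acts pointwise, so $(\A(\xi))^{-l}=\A(\xi^{-l})$, and that Lemma~\ref{lemma: Linfty aqui estimate} lets me replace $L^2$-norms on $\Gamma\t$ by $L^2$-norms on $\Gamma_h\t$ up to a constant.

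First, I would dispatch the $m$-bound. The change of area $\d A = \delta_h\,\d A_h$ gives
\[
m(z_h,\vphi_h)-m_h(Z_h,\phi_h)=\int_{\Gamma_h\t}(\delta_h-1)\,Z_h\phi_h,
\]
so the estimate $\|1-\delta_h\|_{L^\infty(\Gamma_h\t)}\le ch^2$ combined with Cauchy--Schwarz and Lemma~\ref{lemma: Linfty aqui estimate} closes this case. For the $a$-form I would invoke the pointwise identity $\nbg\eta_h^l = (I-d\wein)^{-1}(\prh - \tfrac{\nu_h\otimes\nu}{\nu\cdot\nu_h})\nbgh\eta_h$, which after integrating and exploiting $(\A(\xi))^{-l}=\A(\xi^{-l})$ yields the clean representation
\[
a(\xi;z_h,\vphi_h)=\int_{\Gamma_h\t}\A(\xi^{-l})\,Q_h\,\nbgh Z_h\cdot\nbgh\phi_h .
\]
Subtracting $a_h(\xi^{-l};Z_h,\phi_h)$ produces the kernel $\A(\xi^{-l})(Q_h-\prh)$. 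Since $\|\A\|_{L^\infty}$ is bounded by Assumption~\ref{assumptions}(a) and $\prh\nbgh Z_h=\nbgh Z_h$, one can replace $Q_h-\prh$ by $\pr-Q_h$ up to lower-order terms controlled by $\|d\|_{L^\infty}\le ch^2$, and $\|\pr-Q_h\|_{L^\infty}\le ch^2$ finishes the second bound.

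The $g$- and $b$-bounds follow the same template but one must additionally compare $\nbg\cdot v$ with $\nbgh\cdot V_h$, and $\Btensor(\xi;v)$ with $\Btensor_h(\xi^{-l};V_h)$. The $\mat\A(\xi)=\A'(\xi)\mat\xi$ summand of $\Btensor$ is the delicate one: its lifted discrete counterpart is $\A'(\xi^{-l})\mat_h\xi^{-l}$, and the error $\mat(\A(\xi))-(\mat_h\A(\xi^{-l}))^l$ must remain $O(h^2)$ in $L^\infty$. Here is where the hypothesis $\xi\in\S\t$, giving uniform $W^{2,\infty}$-control, and smoothness of $\A$ are used together with $\|\mat_h d\|_{L^\infty}\le ch$ and $\|\pr(\mat_h Q_h)\pr\|_{L^\infty}\le ch^2$. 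Once these pointwise comparisons are in place, one integrates against $\nbgh Z_h,\nbgh\phi_h$ (for $b$) or $Z_h,\phi_h$ (for $g$) and applies Cauchy--Schwarz.

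The main obstacle I anticipate is the $b$-term, because it simultaneously mixes the nonlinear coefficient, its material derivative, and the geometric deformation of the tangent spaces; writing every intermediate difference as a product of one $O(h^2)$ geometric factor and uniformly bounded smooth factors requires care, but is structurally the same bookkeeping as in \cite[Lemma~5.5]{DziukElliott_L2} and \cite[Lemma~5.7]{DziukElliott_acta}, the only novelty being the harmless Lipschitz dependence on $\xi$ through $\A$.
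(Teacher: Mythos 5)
Your plan for the $m$-, $a$-, and $g$-bounds matches the paper's (or the cited references it delegates to): pull back via the lift, and the entire difference collapses onto a single geometric factor ($\delta_h-1$ for $m$, $\pr-Q_h$ for $a$) controlled by Lemma~\ref{lemma: geometric est}.

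For the $b$-bound, however, your approach is genuinely different from the paper's, and I think less efficient. You propose a direct pointwise comparison of $\Btensor(\xi;v_h)$ with $\Btensor_h(\xi^{-l};V_h)$, which requires you to separately estimate the error in $\mat_h\bigl(\A(\xi)\bigr)$, in $(\nbgh\cdot V_h)\A(\xi^{-l})$, and in $\A(\xi^{-l})\Dtensor_h(V_h)$ after lifting, and then recombine. The paper instead \emph{differentiates the already-established $a$-identity in time}: starting from $\int_{\Ga_h\t}\A(\xi^{-l})\nbgh Z_h\cdot\nbgh\phi_h=\int_{\Ga\t}\A(\xi)\,Q_h^l\,\nbg z_h\cdot\nbg\vphi_h$, it applies $\diff$ and the transport lemma (Lemma~\ref{lemma: transport prop}) to both sides. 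The $a$-terms then cancel by the same transformation identity, and what survives is precisely $b_h-b=\int_{\Ga\t}\mat_h\bigl(\A(\xi)Q_h^l\bigr)\nbg z_h\cdot\nbg\vphi_h+\int_{\Ga\t}\Btensor(\xi;v_h)(Q_h^l-I)\nbg z_h\cdot\nbg\vphi_h$, which the bounds on $\|\pr-Q_h\|_{L^\infty}$ and $\|\pr(\mat_h Q_h)\pr\|_{L^\infty}$ handle directly. That trick packages all the cancellation you would otherwise do by hand, and it is the standard move in this literature (cf.\ the wave-equation case).

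One small but substantive correction to your reasoning: the ``delicate'' comparison $\mat\bigl(\A(\xi)\bigr)-\bigl(\mat_h\A(\xi^{-l})\bigr)^l$ that worries you does not actually arise in the form you describe. The lemma states a bound for $b(\xi;v_h;\cdot,\cdot)$, i.e.\ with the \emph{lifted discrete velocity} $v_h$, not the exact $v$. So the material derivative entering $\Btensor(\xi;v_h)$ is $\mat_h=\pa_t+v_h\cdot\nb$, and the lift intertwines $\mat_h$ on $\Ga_h\t$ with $\mat_h$ on $\Ga\t$ \emph{exactly}. Hence $\bigl(\mat_h\A(\xi^{-l})\bigr)^l=\mat_h\A(\xi)$ with no $O(h^2)$ residual; the only errors come from the geometry factors $Q_h-I$ and $\mat_h Q_h$. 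The same remark applies to $\Dtensor$: you should be comparing $\Dtensor_h(V_h)$ with $\Dtensor(v_h)$, not $\Dtensor(v)$. The hypothesis $\xi\in\S\t$ is therefore not needed for the reason you cite; it enters only to keep $\A(\xi)$, $\A'(\xi)\nbg\xi$, etc.\ uniformly bounded.
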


\begin{proof}
The first estimate was proved in \cite[Lemma 5.5]{DziukElliott_L2}, while the third can be found in \cite[Lemma~7.5]{LubichMansour_wave}.

The proof of the second estimate is similar to the linear case found in
\cite[Lemma~4.7]{DziukElliott_acta}. Again using the notation from \cite{DziukElliott_acta}:
\begin{equation*}
    Q_h = \frac{1}{\delta_h} (I-d\wein) \pr \prh \pr (I-d\wein)
\end{equation*}
we obtain
\begin{equation}
\label{eq: transformation matrix Q}
     \A(\xi^{-l}) \nbgh Z_h \cdot \nbgh \phi_h = \delta_h \A(\xi^{-l}) Q_h \nbg z_h(p,.) \cdot \nbg \vphi_h(p,.).
\end{equation}
Similarly as in \cite[Lemma 5.5]{DziukElliott_L2}, the boundedness
(Proposition \ref{prop: main properties}) and the geometric estimate $\|\pr - Q_h\|_{L^\infty(\Ga_h)} \leq c h^2$ provides the estimate

\begin{align*}
    &\ \big| a(\xi; z_h,\vphi_h) - a_h(\xi^{-l}; Z_h,\phi_h) \big| \\
    =&\ \Big| \int_{\Ga\t}\!\!\!\!  \A(\xi) \nbg z_h \cdot \nbg \vphi_h \d A - \int_{\Ga_h\t}\!\!\!\!  \A(\xi^{-l}) \nbgh Z_h \cdot \nbgh \phi_h \d A_h \Big| \\
    =&\ \Big| \int_{\Ga\t}\!\!\!\!  \A(\xi) \nbg z_h \cdot \nbg \vphi_h \d A - \int_{\Ga_h\t}\!\!\!\!  \delta_h \A(\xi^{-l})  Q_h \nbg z_h(p,.) \cdot \nbg \vphi_h(p,.) \d A_h \Big| \\
    =&\ \Big| \int_{\Ga\t}\!\!\!\!  \A(\xi) \big(\pr-Q_h\big) \nbg z_h \cdot \nbg \vphi_h \d A \Big| \\
    \leq&\ \M c h^2 \|\nbg z_h\|_{L^2(\Ga\t)} \|\nbg \vphi_h\|_{L^2(\Ga\t)}.
\end{align*}

To prove the fourth estimate we follow \cite{LubichMansour_wave}: starting with the equality
\begin{equation*}
    \diff \int_{\Ga_h\t} \A^{-l}(\xi^{-l}) \nbgh Z_h \cdot \nbgh\phi_h = \diff \int_{\Ga\t} \A(\xi) Q_h^l \nbg z_h \cdot \nbg \vphi_h
\end{equation*}
then the transport lemma (Lemma \ref{lemma: transport prop} above) yields
\begin{align*}
    &\ \int_{\Ga_h\t} \A(\xi^{-l}) \mat_h \nbgh Z_h \cdot \nbgh\phi_h +  \int_{\Ga_h\t} \A^{-l}(\xi^{-l}) \nbgh Z_h \cdot \mat_h \nbgh\phi_h\\
    & +
    \int_{\Ga_h\t} \Btensor_h(\xi^{-l};V_h) \nbgh Z_h \cdot \nbgh\phi_h\\
    = &\  \int_{\Ga\t} \A(\xi) Q_h^l  \mat_h  \nbg z_h \cdot \nbg \vphi_h +  \int_{\Ga\t} \A(\xi) Q_h^l  \nbg z_h \cdot \mat_h \nbg \vphi_h \\
    & + \int_{\Ga\t} \Btensor(\xi;v_h) Q_h^l \nbg z_h \cdot \nbg \vphi_h
    + \int_{\Ga\t} \mat_h (\A(\xi)Q_h^l) \nbg z_h \cdot \nbg \vphi_h.
\end{align*}

Therefore using that the lift of $\mat_h Z_h$ is $\mat_h z_h$, \eqref{eq: transformation matrix Q} and Lemma \ref{lemma: geometric est} provides
\begin{align*}
    &|b_h(\xi^{-l};V_h;Z_h,\phi_h) - b(\xi;v_h;Z_h,\phi_h)| \\
    =&\ \Big| \int_{\Ga\t} \mat_h(\A(\xi)Q_h^l) \nbg z_h \cdot \nbg \vphi_h\Big| + \Big|\int_{\Ga\t} \Btensor(\xi;v_h) \big(Q_h^l - I \big) \nbg z_h \cdot \nbg \vphi_h \Big|\\
    &\ \leq c h^2 \|\nbg z_h\|_{L^2(\Ga\t)} \|\nbg \vphi_h\|_{L^2(\Ga\t)},
\end{align*}
where the last estimates follow from Lemma \ref{lemma: geometric est}, similarly as in \cite[Theorem~7.5]{LubichMansour_wave}.
\end{proof}

\subsection{Interpolation estimates}
By $I_h\colon H^{1}\bigl(\Ga (t)\bigr) \to S_{h}^{l}(t)$ we denote the finite element interpolation operator, having the error estimate below.
\begin{lemma}\label{lemma: interpolation error}
    For $m \leq 3$, there exists a constant $c>0$ independent of $h$ and $t$ such that for $u\in H^{2}\bigl(\Ga(t)\bigr)$:
    \begin{align*}
        \lVert u - I_{h} u \rVert_{L^{2}(\Gat)} + h \lVert \nbg( u - I_{h} u) \rVert_{L^{2}(\Gat)} \leq&\ c h^{2} \|u\|_{H^{2}(\Gat)}.
    \intertext{Furthermore, if $u\in W^{2,\infty}(\Gat)$, it also satisfies}
        \lVert \nbg(u - I_{h} u) \rVert_{L^{\infty}(\Gat)} \leq&\ c h \|u\|_{W^{2,\infty}(\Gat)},
    \end{align*}
    where $c>0$ is also independent of $h$ and $t$.
\end{lemma}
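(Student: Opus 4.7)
The plan is to reduce the lifted interpolation estimates on the smooth surface $\Gamma(t)$ to standard affine finite element interpolation estimates on the discrete surface $\Gamma_h(t)$, using the norm equivalence between $\Gamma_h(t)$ and $\Gamma(t)$ provided by Lemma~\ref{lemma: Linfty aqui estimate} together with the geometric estimates of Lemma~\ref{lemma: geometric est}.

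First, I would observe that the nodal vertices $a_i(t)$ of the triangulation lie on $\Gamma(t)$, and the lifted interpolant $I_h u$ is by construction the lift of the piecewise linear Lagrange interpolant $\widehat{I}_h u^{-l} \in S_h(t)$ of $u^{-l}$ at those vertices on $\Gamma_h(t)$ (where $u^{-l}(x) = u(p(x,t))$ is the inverse-lift extension of $u$ to $\Gamma_h(t)$). By Lemma~\ref{lemma: Linfty aqui estimate} one has
\begin{equation*}
\|u - I_h u\|_{L^2(\Gat)} + h\|\nbg(u - I_h u)\|_{L^2(\Gat)}
\leq c \bigl( \|u^{-l} - \widehat{I}_h u^{-l}\|_{L^2(\Ga_h(t))} + h \|\nbgh(u^{-l} - \widehat{I}_h u^{-l})\|_{L^2(\Ga_h(t))} \bigr),
\end{equation*}
and analogously for the $W^{1,\infty}$ estimate, so it suffices to prove the corresponding bounds on $\Gamma_h(t)$.

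Next, I would localize the right-hand side to each simplex $E(t)\in\mathcal{T}_h(t)$. On such a (flat) simplex the standard Bramble--Hilbert/Ciarlet theory for affine-equivalent finite elements yields
\begin{equation*}
\|v - \widehat{I}_h v\|_{L^2(E)} + h \|\nabla_{E}(v - \widehat{I}_h v)\|_{L^2(E)} \leq c h^2 |v|_{H^2(E)}, \quad \|\nabla_E(v-\widehat{I}_h v)\|_{L^\infty(E)} \leq c h |v|_{W^{2,\infty}(E)},
\end{equation*}
with a constant $c$ depending only on the shape regularity of the admissible family $\mathcal{T}_h(t)$ (uniform in $t$ by smoothness of the evolution). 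Applying this to $v = u^{-l}$ and summing over $E(t)$ gives the desired bound on $\Gamma_h(t)$, provided the $H^2$ and $W^{2,\infty}$ norms of $u^{-l}$ on $\Gamma_h(t)$ are controlled by those of $u$ on $\Gamma(t)$. This last step follows from the chain rule applied to $x\mapsto p(x,t)$ together with the bounds $\|d\|_{L^\infty} \leq c h^2$, $\|\nu_j\|_{L^\infty}\leq c h$ and $\|1-\delta_h\|_{L^\infty}\leq c h^2$ of Lemma~\ref{lemma: geometric est}, which ensure that derivatives of order up to two of $u^{-l}$ are equivalent to the corresponding tangential derivatives of $u$ composed with the projection, uniformly in $t$ and $h$.

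The only nontrivial point is verifying the uniformity in $t$ of the various constants, which I would handle by invoking the smoothness of the space-time manifold $\mathcal{G}_T$: the Weingarten map, the oriented distance function, and the shape regularity constants all depend continuously on $t$ over the compact interval $[0,T]$ and are therefore uniformly bounded. Combining the affine estimate, the summation over elements, the chain-rule bound on $\|u^{-l}\|_{H^2(\Gamma_h(t))}$ and $\|u^{-l}\|_{W^{2,\infty}(\Gamma_h(t))}$, and finally the norm equivalence of Lemma~\ref{lemma: Linfty aqui estimate}, yields both asserted estimates with constants independent of $h$ and $t$.
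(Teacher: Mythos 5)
Your proof is correct and follows essentially the same route as the paper, which simply cites Dziuk (1988) for the $L^2/H^1$ estimate and Strang--Fix plus the standard reference-element technique for the $W^{1,\infty}$ estimate; those cited proofs are precisely the lift-to-the-discrete-surface, affine-element Bramble--Hilbert argument you spell out. Two small remarks on precision: (i) the bound $\|u^{-l}\|_{H^2(\Gamma_h(t))} + \|u^{-l}\|_{W^{2,\infty}(\Gamma_h(t))} \lesssim \|u\|_{H^2(\Gat)} + \|u\|_{W^{2,\infty}(\Gat)}$ does not really come from the $O(h)$, $O(h^2)$ smallness estimates of Lemma~\ref{lemma: geometric est} as you suggest; it comes from the uniform $C^2$-boundedness of the projection $p(\cdot,t)$, i.e.\ from the uniform bound on the Weingarten map over the compact time interval -- a point your final paragraph gestures at but attributes to the wrong lemma; and (ii) the restriction $m\le 3$, which the paper explicitly flags (to ensure $H^2\hookrightarrow C^0$ so that nodal interpolation of $H^2$ functions is even defined), is left implicit in your appeal to ``standard Bramble--Hilbert/Ciarlet theory'' and deserves a sentence.
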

\begin{proof}
    The first inequality was shown in \cite{Dziuk88}. The dimension restriction is especially discussed in \cite[Lemma~4.3]{DziukElliott_acta}.

    The analogue of the second estimate for a reference element were shown in \cite[Theorem~3.1]{StrangFix}. Then using standard estimates of the reference element technique we obtain the stated result, cf.\ \cite{BrennerScott}.
\end{proof}

\subsection{The Ritz map for nonlinear problems on evolving surfaces}
\label{section: Ritz}

Ritz maps for quasilinear PDEs on stationary domains were investigated by Wheeler in \cite{Wheeler_nonlinRitz}. 
We generalize this idea for the case of quasilinear evolving surface PDEs. We define a generalized Ritz map for quasilinear elliptic operators, for the linear case see \cite{LubichMansour_wave}.

By combining the above definitions we set the following.
\begin{definition}[Ritz map]
\label{def: Ritz}
    For a given $z\in H^1(\Gat)$ and a given function $\xi\colon \Gat \to \R$ there is a
    unique $\Pt z\in S_h\t$ \st\ for all $\phi_h\in S_h\t$, with the corresponding lift
    $\vphi_h=\phi_h^l$, we have
    \begin{equation}\label{eq: Ritz definition}
        \disp a_h^{\ast}(\xi^{-l} ; \Pt z,\phi_h) = a^\ast(\xi ; z,\vphi_h),
    \end{equation}
    where $a^{\ast}:=a+m$ and $a_h^{\ast}:=a_h+m_h$, to make the forms $a$ and $a_h$ positive
    definite. Then $\P z \in S_h^l\t$ is defined as the lift of $\Pt z$, i.e.\ $\P z = (\Pt z)^l$.
\end{definition}
\noindent We remind here that by $\xi^{-l}$ we mean a function (living on the discrete surface) whose lift is $\xi$.

The Galerkin orthogonality does not hold in this case, just up to a small defect:
\begin{lemma}[pseudo Galerkin orthogonality]
\label{lemma: galerkin orthogonality}
    For any given $\xi\in\S\t$ 
    there holds, that for every $z\in H^{1}(\Gat)$ and $\varphi_{h}\in S_{h}^{l}(t)$
    \begin{equation}\label{eq: Gal-Ortho}
        \abs{\aast(\xi;z-\P z,\varphi_{h}) } \leq c h^2 \| \P z \|_{H^1(\Gat)} \|\vphi_h\|_{H^1(\Gat)},
    \end{equation}
    where $c$ is independent of $\xi$,\ $h$ and $t$.
\end{lemma}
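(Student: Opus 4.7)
The strategy is to exploit the defining identity \eqref{eq: Ritz definition} of the Ritz map to rewrite the continuous form in terms of its discrete counterpart, and then invoke the geometric perturbation estimates from Lemma~\ref{lemma: estimation of forms}. Concretely, for any $\varphi_h \in S_h^l(t)$ with discrete preimage $\phi_h \in S_h(t)$ (so that $\varphi_h = \phi_h^l$), the definition gives
\begin{equation*}
    a^\ast(\xi; z, \varphi_h) \;=\; a_h^\ast(\xi^{-l}; \Pt z, \phi_h).
\end{equation*}
Adding and subtracting $a^\ast(\xi; \P z, \varphi_h)$ yields the decomposition
\begin{equation*}
    a^\ast(\xi; z-\P z, \varphi_h) \;=\; a_h^\ast(\xi^{-l}; \Pt z, \phi_h) - a^\ast(\xi; \P z, \varphi_h),
\end{equation*}
and by construction $\P z = (\Pt z)^l$, so the right-hand side is precisely the geometric perturbation of the bilinear form $a^\ast$ evaluated on a lift/preimage pair.

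Next I would split $a^\ast = a + m$ and apply the first two bounds of Lemma~\ref{lemma: estimation of forms} with the choice $z_h = \P z$ and $\vphi_h = \varphi_h$. This gives
\begin{equation*}
    |m(\P z, \varphi_h) - m_h(\Pt z, \phi_h)| \;\leq\; c h^2 \, \|\P z\|_{L^2(\Gat)} \, \|\varphi_h\|_{L^2(\Gat)},
\end{equation*}
\begin{equation*}
    |a(\xi; \P z, \varphi_h) - a_h(\xi^{-l}; \Pt z, \phi_h)| \;\leq\; c h^2 \, \|\nbg \P z\|_{L^2(\Gat)} \, \|\nbg \varphi_h\|_{L^2(\Gat)}.
\end{equation*}
The $H^1$-norm dominates both the $L^2$-norm and the $L^2$-norm of the tangential gradient, so summing the two estimates and bounding each factor by the full $H^1$-norm produces the claimed bound $c h^2 \|\P z\|_{H^1(\Gat)} \|\varphi_h\|_{H^1(\Gat)}$.

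Essentially no real obstacle appears: once the defining identity is used to replace the continuous form by the discrete one, the lemma reduces to a direct application of the geometric perturbation estimates. The only subtle point is the role of $\xi$, which enters only through the coefficient $\A(\xi)$ in $a$ and $\A(\xi^{-l})$ in $a_h$; this is exactly the situation already covered by the proof of Lemma~\ref{lemma: estimation of forms}, whose constants depend on $\xi$ only through the boundedness constant $\M$ (uniform in $\xi \in \S(t)$ by Proposition~\ref{prop: main properties}). Hence $c$ is indeed independent of $\xi$, $h$ and $t$, as claimed.
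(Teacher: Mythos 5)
Your proof is correct and follows exactly the same route as the paper: use the defining identity of the Ritz map to rewrite $a^\ast(\xi;z-\P z,\varphi_h)$ as the difference $a_h^\ast(\xi^{-l};\Pt z,\phi_h) - a^\ast(\xi;\P z,\varphi_h)$, then apply the geometric perturbation estimates of Lemma~\ref{lemma: estimation of forms}. The paper's proof is just a more compressed version of your argument, leaving the splitting $a^\ast = a + m$ and the uniformity in $\xi$ implicit.
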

\begin{proof}
Using the definition of the Ritz map:
{\setlength\arraycolsep{.13889em}
\begin{eqnarray*}
    \abs{\aast(\xi;z-\P z, \vphi_h)} 
    &=& \abs{\aast_h(\xi^{-l}; \Pt z,  \phi_h) - \aast(\xi;\P z, \vphi_h)}\\
    &\leq&  \M c h^2 \| \P z \|_{H^1(\Gat)} \|\vphi_h\|_{H^1(\Gat)},
\end{eqnarray*}}
where we used Lemma \ref{lemma: estimation of forms}.
\end{proof}

\subsubsection*{Error bounds for the Ritz map and for its material derivatives}

In this section we prove error estimates for the Ritz map \eqref{eq: Ritz definition} and also for its material derivatives, the analogous results for the linear case can be found in \cite[Section~6]{DziukElliott_L2}, \cite[Section~7]{diss_Mansour}. The $\xi$ independency of the estimates requires extra care, previous results, e.g.\ the ones cited above, or \cite[Section~8]{LubichMansour_wave}, are not applicable.

\begin{theorem}
  \label{thm: Ritz error}
  The error in the Ritz map satisfies the bound, for arbitrary $\xi\in\S\t$ and $0 \leq t \leq T$ and $h \leq h_0$ with sufficiently small $h_0$,
  \begin{equation*}
    \|z-\P z\|_{L^2(\Gat)} + h \|z-\P z\|_{H^1(\Gat)} \leq c h^2 \|z\|_{H^2(\Gat)}.
  \end{equation*}
  where the constant $c$ is independent of $\xi$, $h$ and $t$ (but depends
  on $\m$ and $\M$).
\end{theorem}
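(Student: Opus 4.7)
The plan is to follow the classical two-step scheme from the linear theory, adapted by Wheeler to the quasilinear setting: first obtain the $H^1$ estimate from an energy argument built on the pseudo-Galerkin orthogonality of Lemma~\ref{lemma: galerkin orthogonality}, and then upgrade to the $L^2$ estimate by an Aubin--Nitsche duality argument. The central bookkeeping issue throughout will be to keep every constant independent of the frozen coefficient $\xi \in \S(t)$.

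For the $H^1$ bound I would fix $\xi \in \S(t)$ and use that, by Proposition~\ref{prop: main properties}, the bilinear form $\aast(\xi;\cdot,\cdot) = a(\xi;\cdot,\cdot) + m(\cdot,\cdot)$ is coercive and bounded on $H^1(\Gat)$ with constants depending only on $\m$ and $\M$. Writing $e := z - \P z$, coercivity yields
\begin{equation*}
    \min(\m,1)\,\|e\|_{H^1(\Gat)}^2 \leq \aast(\xi; e, e) = \aast(\xi; e, z - I_h z) + \aast(\xi; e, I_h z - \P z).
\end{equation*}
The first summand is dominated by $\max(\M,1)\|e\|_{H^1(\Gat)}\|z - I_h z\|_{H^1(\Gat)} \leq c h \|e\|_{H^1(\Gat)} \|z\|_{H^2(\Gat)}$ via boundedness and Lemma~\ref{lemma: interpolation error}. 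Since $I_h z - \P z$ lies in $S_h^l(t)$, Lemma~\ref{lemma: galerkin orthogonality} bounds the second summand by $c h^2 \|\P z\|_{H^1(\Gat)} \|I_h z - \P z\|_{H^1(\Gat)}$; the triangle inequalities $\|\P z\|_{H^1(\Gat)} \leq \|z\|_{H^1(\Gat)} + \|e\|_{H^1(\Gat)}$ and $\|I_h z - \P z\|_{H^1(\Gat)} \leq c h \|z\|_{H^2(\Gat)} + \|e\|_{H^1(\Gat)}$ then render it quadratic in the small quantities $h\|z\|_{H^2(\Gat)}$ and $\|e\|_{H^1(\Gat)}$. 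A Young's inequality and absorption of the resulting $c h^2 \|e\|_{H^1(\Gat)}^2$ term into the left-hand side for $h \leq h_0$ small enough gives $\|e\|_{H^1(\Gat)} \leq c h \|z\|_{H^2(\Gat)}$, with $c$ depending only on $\m$, $\M$ and the radius $r$ of $\S(t)$.

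For the $L^2$ bound I would apply the duality trick. Let $\psi \in H^1(\Gat)$ solve $\aast(\xi; \vphi, \psi) = \int_{\Gat} e \vphi$ for every $\vphi \in H^1(\Gat)$. Since $\A$ is Lipschitz and $\xi \in \S(t)$ is uniformly bounded in $W^{2,\infty}(\Gat)$, the coefficient $\A(\xi)$ lies in $W^{1,\infty}(\Gat)$ with a norm controlled solely by $r$ and the Lipschitz constant $\ell$, so classical $H^2$ elliptic regularity on the smooth closed surface $\Gat$ provides $\psi \in H^2(\Gat)$ with $\|\psi\|_{H^2(\Gat)} \leq C \|e\|_{L^2(\Gat)}$ for a constant $C$ independent of $\xi$. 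Testing with $\vphi = e$ and inserting $\pm I_h \psi$ gives
\begin{equation*}
    \|e\|_{L^2(\Gat)}^2 = \aast(\xi; e, \psi - I_h \psi) + \aast(\xi; e, I_h \psi).
\end{equation*}
Boundedness, Lemma~\ref{lemma: interpolation error}, and the $H^1$ bound just proved dominate the first summand by $c h \|e\|_{H^1(\Gat)} \|\psi\|_{H^2(\Gat)} \leq c h^2 \|z\|_{H^2(\Gat)} \|e\|_{L^2(\Gat)}$, while Lemma~\ref{lemma: galerkin orthogonality} together with $\|\P z\|_{H^1(\Gat)} \leq c \|z\|_{H^2(\Gat)}$ and $\|I_h \psi\|_{H^1(\Gat)} \leq c \|\psi\|_{H^2(\Gat)}$ bounds the second summand by the same quantity. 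Dividing by $\|e\|_{L^2(\Gat)}$ finishes the argument.

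The main obstacle I anticipate is the $\xi$-independence of every constant. This rests on two points: uniform coercivity and boundedness of $\aast(\xi;\cdot,\cdot)$, which is already built into Assumption~\ref{assumptions} and Proposition~\ref{prop: main properties}, and, more delicately, a $\xi$-uniform $H^2$ elliptic regularity constant for the dual problem on the evolving surface. The latter is available precisely because $\S(t)$ is a ball in $W^{2,\infty}(\Gat)$ and $\A$ is Lipschitz, so the composition $\A(\xi)$ is uniformly $W^{1,\infty}$ on $\Gat$; ensuring that the regularity constant can be taken uniform in $t \in [0,T]$ as well, for the smooth evolving $\Gat$, is the one piece that needs a careful reference to surface elliptic regularity.
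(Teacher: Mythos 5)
Your proposal reproduces the paper's own argument essentially verbatim: the $H^1$ bound via coercivity of $a^\ast(\xi;\cdot,\cdot)$, the splitting $a^\ast(\xi;e,z-I_h z)+a^\ast(\xi;e,I_h z-\P z)$ with Lemma~\ref{lemma: galerkin orthogonality} applied to the second term and absorption for small $h$, followed by an Aubin--Nitsche duality step that invokes the $\xi$- and $t$-uniform $H^2$-regularity result (Theorem~\ref{lemma:EllipticRegularity}). You also correctly flag the one delicate point — that the elliptic regularity constant must be uniform in $\xi$ and $t$ — which is exactly what the paper isolates into its appendix.
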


\begin{proof}
(a) We first prove the gradient estimate.

Starting by the ellipticity of the form $a$ and the non-negativity of the form $m$, then using the estimate \eqref{eq: Gal-Ortho} we have:
{\setlength\arraycolsep{.13889em}
\begin{eqnarray*}
    \m \|z-\P z\|_{H^1(\Gat)}^2 &\leq& \aast(\xi;z-\P z, z-\P z) \\
    &=& \aast(\xi;z-\P z, z-I_h z) + \aast(\xi;z-\P z, I_h z-\P z) \\
    &\leq& \M \|z-\P z\|_{H^1(\Gat)} \|z-I_h z\|_{H^1(\Gat)} \\
        & & + c h^2 \| \P z \|_{H^1(\Gat)} \|I_h z-\P z\|_{H^1(\Gat)} \\
    &\leq& \M c h \|z-\P z\|_{H^1(\Gat)} \|z\|_{H^2(\Gat)} \\
        &+& c h^2 \Big( \! 2\|z-\P z\|_{H^1(\Gat)}^2 \! + \! \|z\|_{H^1(\Gat)}^2 \! + \! ch^2\|z\|_{H^2(\Gat)}^2 \! \Big),
\end{eqnarray*}}
using the interpolation error, and for the second term we used the estimate
\begin{align*}
    &\ \| \P z \|_{H^1(\Gat)} \|I_h z-\P z\|_{H^1(\Gat)} \\
    \leq &\ \Big(\| \P z -z\|_{H^1(\Gat)} + \|z\|_{H^1(\Gat)} \Big) \Big(\|I_h z-z\|_{H^1(\Gat)} + \|z-\P z\|_{H^1(\Gat)}\Big)\\
    \leq &\ 2\|z-\P z\|_{H^1(\Gat)}^2 + \|z\|_{H^1(\Gat)}^2 + ch^2\|z\|_{H^2(\Gat)}^2.
\end{align*}
Now using Young's and Cauchy--Schwarz inequality, and for sufficiently small (but $\xi$ independent) $h$ we have the gradient estimate
\begin{equation*}
    \|z-\P z\|_{H^1(\Gat)}^2 \leq \frac{1}{\m} \M c h^2 \|z\|^2_{H^2(\Gat)}.
\end{equation*}

(b) The $L^2$-estimate follows from the Aubin-Nitsche trick. Let us consider the problem
\begin{equation*}
    -\nbg \cdot \big(\A(\xi)\nbg w\big) + w = z-\P z \qquad \textrm{on}\quad  \Ga\t,
\end{equation*}
then by 
elliptic theory, cf.\ Theorem~\ref{lemma:EllipticRegularity},
we have the estimate, for the solution $w\in H^2(\Gat)$
\begin{equation*}
    \|w\|_{H^2(\Gat)} \leq c \|z-\P z\|_{L^2(\Gat)},
\end{equation*}
where $c$ is independent of $t$ and $\xi$. By testing the elliptic weak problem with $z-\P z$ we have
\begin{align*}
    \|z-\P z\|^2_{L^2(\Gat)} =&\ \aast(\xi;z-\P z,w) \\
    =&\ \aast(\xi;z-\P z,w - I_h w) + \aast(\xi;z-\P z,I_h w) \\
    \leq&\ \M \|z-\P z\|_{H^1(\Gat)} \|w - I_h w\|_{H^1(\Gat)} \\
     &\ + ch^2 \| \P z \|_{H^1(\Gat)} \|I_h w\|_{H^1(\Gat)} .
\end{align*}
Then the estimates of the interpolation error and combination of the above results yields
\begin{equation*}
    \|z-\P z\|_{L^2(\Gat)} \frac{1}{c} \|w\|_{H^2(\Gat)} \leq \|z-\P z\|^2_{L^2(\Gat)} \leq \M c h^2 \|z\|_{H^2(\Gat)} \|w\|_{H^2(\Gat)},
\end{equation*}
which completes the proof of the first assertion.
\end{proof}

We will also need the following error estimates for the material derivatives of the Ritz map.
\begin{theorem}
\label{thm: Ritz mat error}
    The error in the material derivatives of the Ritz map satisfies the bounds, for $k \geq 1$, and for arbitrary $\xi\in\S\t$ and $0 \leq t \leq T$ and $h \leq h_0$ with sufficiently small $h_0$,
    \begin{align*}
        \|(\mat_h)^{(k)}(z-\P z)\|_{L^2(\Gat)} + h &\|\nbg(\mat_h)^{(k)}(z-\P z)\|_{L^2(\Gat)}  \leq \M c_{k} h^2 \sum_{j=1}^{k} \|(\mat_h)^{(j)}z\|_{H^2(\Gat)}.
    \end{align*}
    The constant $c_{k}>0$ is independent of $\xi$ and $h$ (but depends on $\alpha$ and $\M$).
\end{theorem}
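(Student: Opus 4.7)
The plan is to mimic the proof of Theorem~\ref{thm: Ritz error} after differentiating the defining Ritz identity in time, with an induction on $k$. The crux is to produce, for each $k$, a ``perturbed Ritz identity'' for $(\mat_h)^{(k)} \P z$ whose defect from the analogous identity for $(\mat_h)^{(k)} z$ is of size $h^2$ in a suitable dual norm.

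First I would handle the case $k=1$. Start from the defining identity $a_h^{\ast}(\xi^{-l};\Pt z,\phi_h)=a^{\ast}(\xi;z,\vphi_h)$ and differentiate in $t$, applying the transport Lemma~\ref{lemma: transport prop} (and its continuous/discrete analogues) on both sides. Subtracting, one arrives at
\begin{equation*}
    a_h^{\ast}(\xi^{-l};\mat_h\Pt z,\phi_h) - a^{\ast}(\xi;\mat_h z,\vphi_h)
     = - R_1(\xi;z,\P z;\vphi_h),
\end{equation*}
where the remainder $R_1$ collects (i) the discrete--continuous mismatches $b_h^{\ast}-b^{\ast}$ and $g_h-g$ applied to $(\xi^{-l},\Pt z,\phi_h)$ versus $(\xi,\P z,\vphi_h)$, and (ii) terms like $b^{\ast}(\xi;v_h;z-\P z,\vphi_h)$. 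By Lemma~\ref{lemma: estimation of forms} the mismatches are of order $h^2\,\|\nbg \P z\|_{L^2}\|\nbg\vphi_h\|_{L^2}$, and by Theorem~\ref{thm: Ritz error} the $z-\P z$ contributions are bounded by $h^2\|z\|_{H^2}\|\vphi_h\|_{H^1}$. Thus one obtains the pseudo-Galerkin orthogonality for the material derivative:
\begin{equation*}
    |a^{\ast}(\xi;\mat_h z-\mat_h\P z,\vphi_h)| \leq c h^2\bigl(\|z\|_{H^2}+\|\mat_h z\|_{H^2}\bigr)\,\|\vphi_h\|_{H^1}.
\end{equation*}
With this in hand the energy/Aubin--Nitsche argument of Theorem~\ref{thm: Ritz error} carries over verbatim: take $\vphi_h = I_h \mat_h z - \mat_h \P z$ for the gradient bound (using ellipticity \eqref{ellipticity}, boundedness \eqref{boundedness}, Young's inequality and Lemma~\ref{lemma: interpolation error} on $\mat_h z$), then for the $L^2$-bound solve the elliptic dual problem $-\nbg\cdot(\A(\xi)\nbg w)+w=\mat_h(z-\P z)$ on $\Gat$, invoke elliptic regularity (with $\xi$-independent constants) and apply the same splitting, yielding the $k=1$ claim.

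For $k\geq 2$ I would proceed by induction. Differentiating the Ritz identity $k$ times and commuting $(\mat_h)^{(k)}$ with the transport formulas (repeatedly invoking Lemma~\ref{lemma: transport prop}) produces an analogous identity
\begin{equation*}
    a_h^{\ast}\bigl(\xi^{-l};(\mat_h)^{(k)}\Pt z,\phi_h\bigr) - a^{\ast}\bigl(\xi;(\mat_h)^{(k)} z,\vphi_h\bigr) = -R_k,
\end{equation*}
where $R_k$ is a finite sum of (a) geometric mismatches of the forms $a_h-a$, $b_h-b$, $m_h-m$, $g_h-g$ applied to lower-order material derivatives of $\Pt z$ and $\phi_h$, and (b) contributions of the form $b^{\ast}$/$g$ evaluated on $(\mat_h)^{(j)}(z-\P z)$ for $j<k$. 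By Lemma~\ref{lemma: estimation of forms} the geometric mismatches contribute $O(h^2)$ in the $H^{-1}$-type dual norm, while the induction hypothesis controls (b). Repeating the energy and duality arguments then gives the stated bound with $c_k$ picking up the combinatorial factor from the $k$-fold differentiation and the $\xi$-independent elliptic regularity constant. The main obstacle is step (a): one must verify that the geometric perturbation estimates of Lemma~\ref{lemma: estimation of forms} extend uniformly to all material derivatives of the discrete/continuous form difference, i.e.\ that $\|(\mat_h)^{(j)}(\pr-Q_h)\|_{L^\infty(\Ga_h)}\leq c h^2$ and the analogous bounds for $\mat_h(\A(\xi)Q_h^l)$ and $\delta_h$ hold. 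This is the place where the $\xi$-independence of constants, guaranteed by $\xi\in\S\t$ and Assumption~\ref{assumptions}, is genuinely used and must be tracked carefully.
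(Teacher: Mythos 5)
Your overall strategy matches the paper's exactly: differentiate the Ritz identity in time, apply the transport lemma on both sides, isolate the geometric mismatches (bounded by Lemma~\ref{lemma: estimation of forms}) and the $z-\P z$ contributions (bounded by Theorem~\ref{thm: Ritz error}), then run the energy argument for the gradient estimate and Aubin--Nitsche for the $L^2$ estimate, with an induction/analogy for $k\geq 2$. However, there are two genuine gaps.

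First, the claimed pseudo-Galerkin bound
\[
|a^{\ast}(\xi;\mat_h z-\mat_h\P z,\vphi_h)| \leq c h^2\bigl(\|z\|_{H^2}+\|\mat_h z\|_{H^2}\bigr)\,\|\vphi_h\|_{H^1}
\]
is too strong. The error equation contains the term $b(\xi;v_h;z-\P z,\vphi_h)$, which by Cauchy--Schwarz and Theorem~\ref{thm: Ritz error} is only of size $c\,\|\nbg(z-\P z)\|_{L^2}\|\nbg\vphi_h\|_{L^2}=O(h)\|z\|_{H^2}\|\vphi_h\|_{H^1}$, not $O(h^2)$; correspondingly the paper only establishes an $O(h)$ bound in this step. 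The $O(h)$ bound is enough for the gradient estimate, but it means the $L^2$-estimate does \emph{not} ``carry over verbatim'': in the Aubin--Nitsche step the $b$-term applied to $I_h w$ must be treated specially (split as $b(\cdot;z-\P z,I_h w - w)+b(\cdot;z-\P z, w)$ and, for the second piece, exploit the $H^2$-regularity of $w$ by moving a tangential derivative off $z-\P z$), otherwise one only recovers $O(h)$ in $L^2$. Your proposal silently assumes the stronger orthogonality and thereby hides the one nontrivial step in the duality argument.

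Second, you omit the bootstrapping bound on $\|\mat_h\P z\|_{H^1(\Gat)}$. The geometric remainder $F_1$ contains the mismatch $a_h^{\ast}(\xi^{-l};\mat_h\Pt z,\phi_h)-a^{\ast}(\xi;\mat_h\P z,\vphi_h)$, whose Lemma~\ref{lemma: estimation of forms} bound is proportional to $\|\mat_h\P z\|_{H^1}$, not $\|\P z\|_{H^1}$; this quantity is not a priori controlled. The paper first tests the perturbed Galerkin identity with $\vphi_h=\mat_h\P z$ and absorbs, obtaining $\|\mat_h\P z\|_{H^1}\leq C\bigl(\|\mat z\|_{H^1}+h\|z\|_{H^2}\bigr)$, before proceeding. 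Without this step the subsequent estimate of $F_1$ is circular. Both gaps are repairable within your framework, but as written the proposal overclaims and would not compile into a complete proof.
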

\begin{proof}
The proof is a modification of \cite[Theorem 7.3]{diss_Mansour}.

\medskip
For $k=1$: (a) We start by taking the time derivative of the definition of the Ritz map \eqref{eq: Ritz definition}, use the transport properties (Lemma \ref{lemma: transport prop}), and use the definition of the Ritz map once more, we arrive at
{\setlength\arraycolsep{.13889em}
\begin{eqnarray*}
    \aast(\xi;\mat_h z,\vphi_h) &=& -b(\xi;v_h;z,\vphi_h) - g(v_h;z,\vphi_h) \\
    & & +\aast_h(\xi^{-l};\mat_h \Pt z,\phi_h) +b_h(\xi^{-l};V_h;\Pt z,\phi_h) +g_h(V_h;\Pt z,\phi_h).
\end{eqnarray*}}
Then we obtain
\begin{align}
    \aast(\xi;\mat_h z-\mat_h \P z ,\vphi_h) =&\ -b(\xi;v_h;z - \P z,\vphi_h) - g(v_h;z - \P z,\vphi_h) \nonumber \\
     &\ +F_1(\vphi_h), \label{eq: mat error ritz - main equation}
\end{align}
where
\begin{align*}
     F_1(\vphi_h) =&\ \big(\aast_h(\xi^{-l};\mat_h \Pt z,\phi_h) - \aast(\xi;\mat_h \P z ,\vphi_h)\big) \\
                  &\ + \big(b_h(\xi^{-l};V_h;\Pt z,\phi_h) - b(\xi;v_h;\P z,\vphi_h)\big) \\
                  &\ + \big(g_h(V_h;\Pt z,\phi_h) - g(v_h;\P z,\vphi_h)\big).
\end{align*}
Using the geometric estimates of Lemma \ref{lemma: estimation of forms} $F_1$ can be estimated as
\begin{equation*}
    \big|F_1(\vphi_h)\big| \leq c \M h^2 \big( \|\mat_h\P z\|_{H^1(\Gat)} + \|\P z\|_{H^1(\Gat)}\big) \|\vphi_h\|_{H^1(\Gat)}.
\end{equation*}
Then using $\mat_h \P z$ as a test function in \eqref{eq: mat error ritz - main equation}, and using the error estimates of the Ritz map, together with the estimates above, with $h\leq h_0$ independent of $\xi$, we have
\begin{equation*}
    \|\mat_h \P z\|_{H^1(\Gat)} \leq \M c\|\mat z\|_{H^1(\Gat)} + \M c h \|z\|_{H^2(\Gat)}.
\end{equation*}
Combining all the previous estimates and using Young's inequality, Cauchy--Schwarz inequality, for sufficiently small ($\xi$ independent) $h\leq h_0$, we obtain
\begin{equation*}
     \aast(\xi;\mat_h z-\mat_h \P z ,\vphi_h) \leq \M c h \Big( \|z\|_{H^2(\Gat)} + h\|\mat z\|_{H^1(\Gat)} \Big) \|\vphi_h\|_{H^1(\Gat)}.
\end{equation*}
Then as in the previous proof we have
\begin{align*}
    \m \|\mat_hz-\mat_h\P z\|_{H^1(\Gat)}^2 
    \leq&\ \aast(\xi;\mat_h z-\mat_h \P z, \mat_h z-\mat_h \P z) \\
    =&\ \aast(\xi;\mat_h z - \mat_h \P z, \mat_h z-I_h \mat z) + \aast(\xi;\mat_h z-\mat_h \P z, I_h \mat z-\mat_h \P z) \\
    \leq&\ \M \|\mat_h z-\mat_h \P z\|_{H^1(\Gat)} \|\mat_h z-I_h \mat z\|_{H^1(\Gat)} \\
         &\ + \M c h \Big( \|z\|_{H^2(\Gat)} + h\|\mat z\|_{H^1(\Gat)} \Big) \|I_h \mat z-\mat_h \P z\|_{H^1(\Gat)}.
\end{align*}
Then the interpolation estimates, Young's inequality, absorption using $h\leq h_0$, yields the gradient estimate.

(b) The $L^2$-estimate again follows from the Aubin-Nitsche trick. Let us now consider the problem
\begin{equation*}
    -\nbg \cdot \big(\A(\xi)\nbg w\big) + w = \mat_h z-\mat_h \P z \qquad \textrm{on}\quad  \Ga\t,
\end{equation*}
together with the elliptic estimate (cf.\ Theorem~\ref{lemma:EllipticRegularity}), for the solution $w\in H^2(\Gat)$
\begin{equation*}
    \|w\|_{H^2(\Gat)} \leq c\|\mat_h z-\mat_h \P z\|_{L^2(\Gat)},
\end{equation*}
again, $c$ is independent of $t$ and $\xi$.

Then a similar calculation as \cite[Theorem~6.2]{DziukElliott_L2}, \cite[Theorem~7.3]{diss_Mansour} provides the $L^2$-norm estimate.

For $k>1$ the proof is analogous.
\end{proof}

\subsubsection*{Regularity of the Ritz map}

The following technical result will play an important role in showing optimal bounds of the semidiscrete residual.

\begin{lemma}\label{lemma: Ritz regularity lemma}
  For $m \leq 2$, there exists a constant $c>0$ independent of $h$ and $t$ such that for a function $u \in W^{2,\infty}(\Gat)$  for all $t\in[0,T]$, the following estimate holds
  \[
    \Lnorm[\surface(t)][\infty]{ \nbg \P u } \leq c \Wnorm[\surface(t)][2,\infty]{u}.
  \]
\end{lemma}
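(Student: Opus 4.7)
The plan is to compare $\mathcal{P}_h u$ with the Lagrange interpolant $I_h u$ and then exploit an inverse inequality on the finite element space together with the $H^1$ bound for the Ritz map error already proved in Theorem~\ref{thm: Ritz error}. The dimension restriction $m \leq 2$ enters exactly through the inverse estimate.

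First I would split
\begin{equation*}
  \Lnorm[\surface(t)][\infty]{\nbg \P u}
  \leq \Lnorm[\surface(t)][\infty]{\nbg I_h u}
      + \Lnorm[\surface(t)][\infty]{\nbg (\P u - I_h u)}.
\end{equation*}
For the first term I would use the $W^{1,\infty}$ interpolation bound in Lemma~\ref{lemma: interpolation error} together with the trivial bound $\|\nbg u\|_{L^\infty(\Gat)}\leq \|u\|_{W^{2,\infty}(\Gat)}$, which already gives a bound of the correct form. For the second term, since $\P u - I_h u$ is a lift of a finite element function on the admissible, quasi-uniform triangulation $\Th(t)$, I would apply the standard inverse estimate
\begin{equation*}
  \Lnorm[\surface(t)][\infty]{\nbg (\P u - I_h u)}
  \leq c\, h^{-m/2}\,\Lnorm[\surface(t)][2]{\nbg (\P u - I_h u)}.
\end{equation*}

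To close the argument I would bound the $L^2$ norm of the gradient by the triangle inequality, using both the $H^1$ error estimate for the Ritz map from Theorem~\ref{thm: Ritz error} and the $H^1$ interpolation bound of Lemma~\ref{lemma: interpolation error}:
\begin{equation*}
  \Lnorm[\surface(t)][2]{\nbg (\P u - I_h u)}
  \leq \Lnorm[\surface(t)][2]{\nbg (\P u - u)} + \Lnorm[\surface(t)][2]{\nbg (u - I_h u)}
  \leq c\,h\,\|u\|_{H^2(\Gat)}.
\end{equation*}
Combining all estimates produces the factor $h^{1-m/2}$, which is bounded by a constant precisely when $m\leq 2$. Using that $\Gat$ is compact, $\|u\|_{H^2(\Gat)} \leq c\|u\|_{W^{2,\infty}(\Gat)}$, which yields the claimed bound.

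The only nontrivial step is justifying that the inverse inequality, typically stated on the discrete surface $\Ga_h(t)$ with respect to the discrete tangential gradient, transfers to the lifted space $S_h^l(t)$ with the continuous tangential gradient. This follows from the $W^{1,p}$ norm equivalence between discrete and lifted functions of Lemma~\ref{lemma: Linfty aqui estimate} (applied both for $p = 2$ and $p = \infty$), combined with the standard inverse inequality on a shape-regular mesh for piecewise linear finite elements. Once this transfer is in place, the remaining steps are just interpolation theory and the already established Ritz map error estimate, so the proof is short.
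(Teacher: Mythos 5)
Your proposal is correct and follows essentially the same route as the paper: triangle inequality via $I_h u$, inverse inequality $L^\infty \to L^2$ with the factor $h^{-m/2}$, then the $H^1$ Ritz error bound and interpolation estimate to produce $h^{1-m/2}$, which is bounded precisely for $m\leq 2$. Your extra remark about transferring the inverse inequality from the discrete surface to the lifted space via Lemma~\ref{lemma: Linfty aqui estimate} is a point the paper glosses over, but the core argument is the same.
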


\begin{proof}
Using the triangle inequality we start to estimate as
\begin{align*}
    \|\nbg \P u\|_{L^\infty(\Gat)} 
    \leq&\ \|\nbg (\P u - I_hu)\|_{L^\infty(\Gat)} + \|\nbg (I_h u - u)\|_{L^\infty(\Gat)} \\
     &\ + \|\nbg u\|_{L^\infty(\Gat)}.
\end{align*}
The last term is harmless. The second term is estimated using Lemma~\ref{lemma: interpolation error}. For the first term, using the inverse estimate, error estimates for the Ritz map and for the interpolation operator we obtain
\begin{align*}
    \|\nbg (\P u - I_hu)\|_{L^\infty(\Gat)} \leq&\ ch^{-m/2} \|\nbg (\P u - I_hu)\|_{L^2(\Gat)} \\
    \leq&\ ch^{-m/2} \Big(\|\nbg (\P u - u)\|_{L^2(\Gat)} + \|\nbg (u - I_hu)\|_{L^2(\Gat)} \Big) \\
    \leq&\ ch^{-m/2}h \|u\|_{H^2(\Gat)} \leq c \Wnorm[\surface(t)][2,\infty]{u}.
\end{align*}
\end{proof}

\begin{remark}
  A stronger result holds, assuming that $u\in W^{1,\infty}(\surface(t))$, the bound $\Lnorm[\surface(t)][\infty]{\nbg \P u}\leq c \Wnorm[\surface(t)][1,\infty]{u}$ can be shown. However, the proof is technical and requires more sophisticated arguments, cf.\ \cite{diss_Power}. This enables to weaken the assumption to $W^{1,\infty}$ in the definition of the $\S\t$ set. We do not include these results here because of their length.
\end{remark}

\section{Time discretizations: stability}
\label{section: stability}

\subsection{Runge--Kutta methods}
We consider an $s$-stage algebraically stable implicit Runge--Kutta (R--K) method for the time discretization of the ODE system \eqref{eq: ODE system}, coming from the ESFEM space discretization of the quasilinear parabolic evolving surface PDE.

In the following we extend the stability result for R--K methods of \cite[Lemma~7.1]{DziukLubichMansour_rksurf}, to the case of quasilinear problems. Apart form the properties of the ESFEM the proof is based on the energy estimation techniques, see Lubich and Ostermann \cite[Theorem~1.1]{LubichOstermann_RK}. Generally on Runge--Kutta methods we refer to \cite{HairerWannerII}.

\medskip
For the convenience of the reader we recall the method: for simplicity, we assume equidistant time steps $t_{n}:= n\tau$, with step size $\tau$. Our results can be straightforwardly extended to the case of nonuniform time steps.  The $s$-stage implicit Runge--Kutta method, defined by the given Butcher tableau
\[
  \begin{array}{c|c}
    (c_{i}) & (a_{ij}) \\
     \hline & (b_{i})
  \end{array} \qquad \text{for}\quad i,j = 1,2,\dotsc, s,
  \]
  applied to the system \eqref{eq: ODE system}, reads as
\begin{subequations}
    \begin{alignat*}{3}
        M_{ni} \alpha_{ni} &= M_{n} \alpha_{n} + \tau \sum_{j=1}^{s} a_{ij}
        \dot{\alpha}_{nj}, \qquad &&\text{for} \quad i=1,2,\dotsc,s, \\
        M_{n+1} \alpha_{n+1} &= M_{n} \alpha_{n} + \tau \sum_{i=1}^{s} b_{i}
        \dot{\alpha}_{ni},&&
    \intertext{where the internal stages satisfy}
        \nonumber
        0 &= \dot{\alpha}_{ni} + A(\alpha_{ni}) \alpha_{ni} \qquad &&\text{for} \quad i=1,2,\dotsc,s,
    \end{alignat*}
\end{subequations}

\noindent with $M_{ni}:=M(t_{n}+c_{i}\tau)$ and $M_{n+1}:=M(t_{n+1})$. Here $\dot{\alpha}_{ni}$ is not a derivative but a suggestive notation. \par
We recall that the fully discrete solution is $U_{h}^{n} = \sum_{j = 1 }^{N}  \alpha_{n,j} \chi_{j}(\, . \, , t_{n}) $.

For the R--K method we make the following assumptions:
\begin{assumption}\label{assump: RK method assumptions}
    \begin{itemize}
        \item The method has stage order $q\geq 1$ and classical order $p\geq q+1$.
        \item The coefficient matrix $(a_{ij})$
            is invertible.
        \item The method is \emph{algebraically stable}, i.e.\ $b_{j}>0$ for $j=1,2,\dotsc,s$ and the following matrix is positive semi-definite:
            \begin{align*}
                \big(b_{i}a_{ij} - b_{j}a_{ji} -b_{i}b_{j}\big)_{i,j=1}^{s}.
            \end{align*}
        \item The method is \emph{stiffly accurate}, i.e.\ $b_{j}=a_{sj}$, and $c_{s}=1$ for $j=1,2,\dotsc,s$.
    \end{itemize}
\end{assumption}

Instead of \eqref{eq: ODE system}, let us consider the following perturbed version of the equation:
\begin{equation}\label{eq: pertubated ODE}
    \begin{cases}
        \begin{alignedat}{2}
            \disp\diff \big(M\t \widetilde{\alpha}\t\big) + A(\widetilde{\alpha}\t) \widetilde{\alpha}\t &= M\t r\t \\
            \disp \widetilde{\alpha}(0) &= \widetilde{\alpha}_0.
        \end{alignedat}
    \end{cases}
\end{equation}
The substitution of the true solution $\widetilde{\alpha}\t$ of the perturbed problem into the R--K method, yields the defects $\Delta_{ni}$ and $\delta_{ni}$, by setting $e_n = \alpha_n - \widetilde{\alpha}(t_n)$, $E_{ni} = \alpha_{ni} - \widetilde{\alpha}(t_n+c_i\tau)$ and $\dot{E}_{ni} = \dot{\alpha}_{ni} - \dot{\widetilde{\alpha}}(t_n+c_i\tau)$, then by subtraction the following \emph{error equations} hold:

\begin{subequations}
  \begin{align*}
    M_{ni}E_{ni}  &= M_{n}e_{n} + \tau\sum_{j=1}^{s} a_{ij} \dot{E}_{nj}  -
    \Delta_{ni}, \qquad \text{for} \quad i=1,2,\dotsc,s, \\
    M_{n+1} e_{n+1} &= M_{n} e_{n} + \tau \sum_{i=1}^{s} b_{i} \dot{E}_{ni}  -
    \delta_{n+1},
  \end{align*}
\end{subequations}
where the internal stages satisfy:

\begin{align*}
  \dot{E}_{ni} + A(\alpha_{ni}) E_{ni} = -\big( A(\alpha_{ni}) - A(\widetilde{\alpha}_{ni}) \big) \widetilde{\alpha}_{ni}- M_{ni} r_{ni}, \quad \text{for} \quad i=1,2,\dotsc,s,
\end{align*}
with $r_{ni} := r(t_{n} + c_{i}\tau)$.

Now we state one of the key lemmas of this paper, which provide unconditional stability for the above class of Runge--Kutta methods.
\begin{lemma}
\label{lemma: RK stability}
    For an $s$-stage implicit Runge--Kutta method satisfying Assumption \ref{assump: RK method assumptions}. If the equation \eqref{eq: weak form} has a solution in $\S\t$ for $0\leq t \leq T$. Then there exists a $\tau_0 >0$, \st\ for $\tau\leq\tau_0$ and $t_n=n \tau\leq T$, that the error $e_n$ is bounded by
    \begin{alignat*}{2}
        |e_n|_{\BM_n}^2 + \tau \sum_{k=1}^{n} |e_k|_{\BA_k}^2
        \leq&  C \bigg( |e_0|_{\BM_0}+\tau \sum_{k=1}^{n-1} \sum_{i=1}^{s} \|M_{ki} r_{ki}\|_{\ast, t_{ki}}^2
        + \tau \sum_{k=1}^{n} \Big|\frac{\delta_k}{\tau}\Big|_{\BM_k}^2 \\
        & + C \tau \sum_{k=0}^{n-1} \sum_{i=1}^{s} \Bigl( |M_{ki}^{-1}\Delta_{ki}|_{\BM_{ki}}^2
        + |M_{ki}^{-1}\Delta_{ki}|_{\BA_{ki}}^2\Bigr) \bigg),
    \end{alignat*}
    where $\|w\|_{\ast, t}^2=w^T(\BA\t+\BM\t)\inv w$. The constant $C$ is independent of $h, \ \tau$ and $n$ (but depends on $\m$, $\M$, $L$, $\mu$, $\kappa$ and $T$).
\end{lemma}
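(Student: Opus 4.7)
\medskip
\noindent\textbf{Proof proposal.} My plan is to follow the energy technique of Lubich and Ostermann~\cite{LubichOstermann_RK}, adapted to the ESFEM time-dependent mass/stiffness matrix setting of Dziuk, Lubich and Mansour~\cite{DziukLubichMansour_rksurf}, and to handle the new quasilinear term through the Lipschitz-type condition \eqref{Lipschitz} provided by Proposition~\ref{prop: main properties}. The starting point is the triple of error equations stated just before the lemma: the internal-stage relations, the update relation, and the stage residual $\dot E_{ni} + A(\alpha_{ni})E_{ni} = -(A(\alpha_{ni})-A(\widetilde\alpha_{ni}))\widetilde\alpha_{ni} - M_{ni} r_{ni}$.

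First I would derive the one-step energy inequality. Using stiff accuracy ($b_j=a_{sj}$, $c_s=1$) and the algebraic stability of the scheme, a standard Burrage--Butcher-type manipulation, together with the internal-stage form $M_{ni}E_{ni}=M_n e_n + \tau\sum_j a_{ij}\dot E_{nj} - \Delta_{ni}$, yields
\[
  |e_{n+1}|_{\BM_{n+1}}^2 - |e_n|_{\BM_n}^2 \;\leq\; 2\tau\sum_{i=1}^{s} b_i \, \dot E_{ni}^{\,T} \BM_{ni} E_{ni} \;+\; \text{defect terms involving } \Delta_{ni},\delta_{n+1}.
\]
Substituting the stage residual and applying ellipticity \eqref{ellipticity} gives the dissipative bound $\dot E_{ni}^{\,T}\BM_{ni}E_{ni} \leq -\m|E_{ni}|_{\BA_{ni}}^2 + (\text{nonlinear correction}) + (\text{source}\;r_{ni})$; the source contributes the $\|M_{ki}r_{ki}\|_{\ast,t_{ki}}^2$ terms in the claim after using $a\cdot b \leq \tfrac{1}{2\m}\|a\|_\ast^2 + \tfrac{\m}{2}(|b|_{\BA}^2+|b|_{\BM}^2)$.

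The heart of the quasilinear analysis is the correction term $-E_{ni}^{\,T}\bigl(A(\alpha_{ni})-A(\widetilde\alpha_{ni})\bigr)\widetilde\alpha_{ni}$. Here I invoke \eqref{Lipschitz}: because we have assumed the PDE solution lies in $\S(t)$, the corresponding Ritz function (whose nodal vector is $\widetilde\alpha$) satisfies a uniform $W^{1,\infty}$ bound via Lemma~\ref{lemma: Ritz regularity lemma}, so that $\widetilde\alpha_{ni}$ plays the role of $u\in\S(t)$ in \eqref{Lipschitz}. Proposition~\ref{prop: main properties} then delivers
\[
  \bigl|E_{ni}^{\,T}\bigl(A(\alpha_{ni})-A(\widetilde\alpha_{ni})\bigr)\widetilde\alpha_{ni}\bigr| \;\leq\; \tfrac{\m}{2}|E_{ni}|_{\BA_{ni}}^2 + L\,|E_{ni}|_{\BM_{ni}}^2,
\]
where the $\BA$-piece is absorbed into the ellipticity contribution and the $\BM$-piece feeds the Gronwall step. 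This is the one place where the quasilinearity differs substantively from the linear theory and is what I expect to be the main obstacle, since it simultaneously requires (i) the Lipschitz bound \eqref{Lipschitz}, (ii) the $W^{1,\infty}$-regularity of the Ritz map, and (iii) a clean separation into an $\BA$-absorbable part and a Gronwall part.

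To close the argument I would translate all inner-stage norms $|\cdot|_{\BM_{ni}}, |\cdot|_{\BA_{ni}}$ into norms at the grid nodes $t_n$ using Lemma~\ref{lemma: discrete sobolev norm estimates}, producing multiplicative factors of the form $1+O(\tau)$ under the step-size restriction $\tau\leq\tau_0$. The invertibility of $(a_{ij})$ lets me express $\dot E_{ni}$ in terms of $e_n$, $E_{nj}$ and the stage defects $\Delta_{nj}$, which is what converts the $\Delta_{ni}$-dependence into the $|M_{ki}^{-1}\Delta_{ki}|_{\BM}^2 + |M_{ki}^{-1}\Delta_{ki}|_{\BA}^2$ terms appearing on the right. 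Finally, summing over $k=0,\dots,n$ and applying a discrete Gronwall lemma (absorbing the $L|E_{ni}|_{\BM}^2$ contributions for $\tau$ small enough depending on $L$, $\m$, $\M$, $\mu$, $\kappa$, $T$) yields the asserted bound. The extension from the $L^2$-type control $|e_n|_{\BM_n}^2$ to the additional gradient term $\tau\sum_k|e_k|_{\BA_k}^2$ comes for free from the ellipticity term that was left over after absorption.
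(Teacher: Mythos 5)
Your proposal follows precisely the route the paper takes: the paper's proof is a pointer to combining the energy technique of Lubich--Ostermann (Theorem~1.1 of \cite{LubichOstermann_RK}) for the quasilinear internal-stage estimates with the time-dependent ESFEM mass/stiffness matrix machinery of Dziuk--Lubich--Mansour (Lemma~7.1 of \cite{DziukLubichMansour_rksurf}), which is exactly the Burrage--Butcher energy inequality, stage-residual dissipation via \eqref{ellipticity}, Lipschitz treatment of the quasilinear correction via \eqref{Lipschitz}, norm transfer via Lemma~\ref{lemma: discrete sobolev norm estimates}, and discrete Gronwall argument you lay out. Your observation that the Ritz regularity Lemma~\ref{lemma: Ritz regularity lemma} is what lets $\widetilde{\alpha}$ play the role of $u\in\S\t$ in \eqref{Lipschitz} is also the right reading of where that lemma enters the analysis.
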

\begin{proof}
The combination of proofs of Theorem 1.1 from \cite{LubichOstermann_RK} and of Lemma 7.1 from \cite{DziukLubichMansour_rksurf} (or \cite[Lemma 3.1]{diss_Mansour}) suffices, therefore it is omitted here. To be precise, the proof of this result is more closely related to \cite{DziukLubichMansour_rksurf}, except the estimates involving the internal stages are more similar to \cite{LubichOstermann_RK}.
\end{proof}

Then, using the above stability results, the error bounds are following analogously as in \cite[Theorem 8.1]{DziukLubichMansour_rksurf} (or \cite[Theorem 5.1]{diss_Mansour}).
\begin{theorem}
\label{thm: RK error estimates}
    Consider the quasilinear parabolic problem \eqref{eq: strong form}, having a solution in $\S\t$ for $0\leq t \leq T$. Couple the evolving surface finite element method as space discretization with time discretization by an $s$-stage implicit Runge--Kutta method satisfying Assumption \ref{assump: RK method assumptions}. Assume that the Ritz map of the solution has \co\ discrete material derivatives up to order $q+2$. Then there exists $\tau_0>0$, independent of $h$, \st\ for $\tau \leq \tau_0$, for the error $E_h^n=U_h^n-\P u(.,t_n)$ the following estimate holds for $t_n=n\tau \leq T$:
    \begin{gather*}
        \|E_h^n\|_{L^2(\Ga_h(t_n))} + \Big( \tau \sum_{j=1}^n \|\nb_{\Ga_h(t_j)} E_h^j \|_{L^2(\Ga_h(t_j))}^2 \Big)^{\frac{1}{2}} \\
        \leq C \tilde{\beta}_{h,q} \tau^{q+1} \! + \! C \Big( \! \tau \!\! \sum_{k=0}^{n-1} \sum_{i=1}^s \|R_h(.,t_k\! +\! c_i\tau) \|_{H^{-1}(\Ga_h(t_k+c_i\tau))}^2 \! \Big)^{\frac{1}{2}} \!\! + C \|E_h^0\|_{L^2(\Ga_h(0))},
    \end{gather*}
    where the constant $C$ is independent of $h, \ \tau$ and $n$ (but depends on $\m$, $\M$, $L$, $\mu$, $\kappa$ and $T$). Furthermore
    \begin{align*}
        \tilde{\beta}_{h,q}^2 =& \int_0^T \sum_{\ell=1}^{q+2} \| (\mat_h)^{(\ell)} (\P u)(.,t) \|_{L^2(\Ga_h\t)} \d t \\
        + & \int_0^T \sum_{\ell=1}^{q+1} \| \nb_{\Ga_h\t} (\mat_h)^{(\ell)} (\P u)(.,t) \|_{L^2(\Ga_h\t)} \d t.
    \end{align*}
    The $H^{-1}$ norm  of $R_h$ is defined as
    \begin{equation*}
        \disp \|R_h(.,t) \|_{H^{-1}(\Ga_h\t)} := \sup_{0\neq\phi_h\in S_h\t} \frac{\la R_h(.,t),\phi_h\ra_{L^2(\Ga_h\t)}}{\|\phi_h\|_{H^{1}(\Ga_h\t)}} \ .
    \end{equation*}
\end{theorem}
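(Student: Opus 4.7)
The plan is to take the Ritz projection of the exact solution as a reference trajectory and apply Lemma \ref{lemma: RK stability}. Let $\widetilde{\alpha}(t) \in \R^N$ denote the nodal vector representing $\Pt u(\cdot,t) \in S_h(t)$. Using the weak form \eqref{eq: weak form}, the transport lemma (Lemma \ref{lemma: transport prop}) and the definition of the Ritz map \eqref{eq: Ritz definition}, one checks that $\widetilde{\alpha}$ satisfies the perturbed ODE \eqref{eq: pertubated ODE} where the forcing $M(t) r(t)$ is precisely the nodal vector of the semidiscrete residual defined by
\[
    \int_{\Ga_h(t)} R_h \phi_h = \diff m_h(\Pt u,\phi_h) + a_h(\Pt u;\Pt u,\phi_h) - m_h(\Pt u, \mat_h \phi_h) \qquad (\phi_h \in S_h(t)).
\]
With this identification, $\|M(t) r(t)\|_{\ast,t} = \|R_h(\cdot,t)\|_{H^{-1}(\Ga_h(t))}$, and $e_n = \alpha_n - \widetilde{\alpha}(t_n)$ is the nodal vector of $E_h^n = U_h^n - \Pt u(\cdot,t_n)$, so the definitions of the discrete Sobolev norms yield $|e_n|_{\BM_n} = \|E_h^n\|_{L^2(\Ga_h(t_n))}$ and $|e_k|_{\BA_k} = \|\nb_{\Ga_h(t_k)} E_h^k\|_{L^2(\Ga_h(t_k))}$.

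\textbf{Defect estimates.} Substituting the smooth reference $\widetilde{\alpha}$ into the Runge--Kutta scheme generates the defects $\Delta_{ni}$ and $\delta_{n+1}$. Taylor expansion of $\widetilde{\alpha}(t_n+c_i\tau)$ about $t_n$, combined with the stage order $q$, classical order $p\geq q+1$, and stiff accuracy from Assumption \ref{assump: RK method assumptions}, gives
\[
    |M_{ki}^{-1}\Delta_{ki}|_{\BM_{ki}}^2 + |M_{ki}^{-1}\Delta_{ki}|_{\BA_{ki}}^2 \leq c\, \tau^{2(q+1)} \sum_{\ell=1}^{q+2}\Bigl( \|(\mat_h)^{(\ell)} \P u\|_{L^2(\Ga_h)}^2 + \|\nb_{\Ga_h}(\mat_h)^{(\ell-1)}\P u\|_{L^2(\Ga_h)}^2\Bigr),
\]
and by stiff accuracy an analogous bound for $|\delta_{k+1}/\tau|_{\BM_k}$ at the same order. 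The key observation is that thanks to the transport property $\mat_h\chi_j = 0$, differentiating the product $M(t)\widetilde{\alpha}(t)$ in time turns into discrete material derivatives $(\mat_h)^{(\ell)}$ applied to $\Pt u$; this is exactly what makes the constant $\tilde\beta_{h,q}$ natural, and what requires the regularity hypothesis on the material derivatives of $\Pt u$.

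\textbf{Assembly and conversion of norms.} Plugging the defect bounds into the stability estimate of Lemma~\ref{lemma: RK stability}, the double sum $\tau\sum_{k,i}(|M_{ki}^{-1}\Delta_{ki}|_{\BM}^2 + |M_{ki}^{-1}\Delta_{ki}|_{\BA}^2)$ is controlled, after passing from the Riemann sum to the time integral, by $\tilde\beta_{h,q}^2\,\tau^{2(q+1)}$. The residual term in Lemma~\ref{lemma: RK stability} coincides with the $H^{-1}$-sum of $R_h$ in the statement by Step~1. The initial error appears as $\|E_h^0\|_{L^2(\Ga_h(0))}$. Taking square roots and using the norm identifications from Step~1 gives the claimed bound; the smallness condition $\tau\leq\tau_0$ is inherited from Lemma~\ref{lemma: RK stability} and is independent of $h$ since the Lipschitz-type constant $L$ in \eqref{Lipschitz} depends only on $r$ from the definition of $\S(t)$.

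\textbf{Main obstacle.} The delicate point is converting the time-Taylor expansion of $M(t)\widetilde\alpha(t)$ and of the nonlinear term $A(\widetilde\alpha)\widetilde\alpha$ into bounds on $\mat_h$-derivatives of $\Pt u$: the quasilinear coupling forces a Lipschitz-type increment to appear, which must be controlled via \eqref{Lipschitz} and absorbed using ellipticity, as in \cite{LubichOstermann_RK}. Up to this quasilinear wrinkle, the remaining bookkeeping is parallel to \cite[Theorem~8.1]{DziukLubichMansour_rksurf}.
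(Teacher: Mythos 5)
Your proposal is correct and follows essentially the same route the paper intends: the paper simply states that the error bound ``follows analogously as in \cite[Theorem~8.1]{DziukLubichMansour_rksurf}'' once Lemma~\ref{lemma: RK stability} is in hand, and you have reconstructed exactly that argument — Ritz projection as reference trajectory giving the perturbed ODE with forcing $M(t)r(t)$, identification $\|M(t)r(t)\|_{\ast,t}=\|R_h(\cdot,t)\|_{H^{-1}(\Ga_h(t))}$, Taylor-expansion defect bounds of order $\tau^{q+1}$ via stage order and stiff accuracy, norm conversion through the discrete Sobolev norms, and the quasilinear Lipschitz increment absorbed by ellipticity as in \cite{LubichOstermann_RK}. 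The only cosmetic discrepancy is an off-by-one in the gradient-sum index inside your defect bound ($\ell-1$ running from $0$ rather than $1$), which does not affect the order or the conclusion.
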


\subsection{Backward differentiation formulae}

We apply a $k$-step backward difference formula (BDF) for $k\leq5$ as a discretization to the ODE system \eqref{eq: ODE system}, coming from the ESFEM space discretization of the quasilinear parabolic evolving surface PDE. Both implicit and linearly implicit methods are discussed.

In the following we extend the stability result for BDF methods of \cite[Lemma~4.1]{LubichMansourVenkataraman_bdsurf}, to the case quasilinear problems. Apart from the properties of the ESFEM the proof is based on Dahlquist's G--stability theory  \cite{Dahlquist} 
and on the multiplier technique of Nevanlinna and Odeh \cite{NevanlinnaOdeh}.

\medskip
We recall the $k$-step BDF method for \eqref{eq: ODE system} with step size $\tau>0$:
\begin{equation}\label{def: BDF}
    \disp \frac{1}{\tau} \sum_{j=0}^k \delta_j M(t_{n-j})\alpha_{n-j} + A(\alpha_n)\alpha_n = 0, \qquad (n \geq k),
\end{equation}
where the coefficients of the method are given by $\delta(\zeta)=\sum_{j=0}^k \delta_j \zeta^j=\sum_{\ell=1}^k \frac{1}{\ell}(1-\zeta)^\ell$, while the starting values are $\alpha_0, \alpha_1, \dotsc, \alpha_{k-1}$. The method is known to be $0$-stable for $k\leq6$ and have order $k$ (for more details, see \cite[Chapter~V.]{HairerWannerII}).

Similarly linearly implicit method modification is, using the polynomial $\gamma(\zeta) = \sum_{j=1}^{k} \gamma_j \zeta^j = \zeta^k -(\zeta-1)^{k-1}$:
\begin{equation}\label{def: linearly implicit BDF}
    \frac{1}{\tau} \sum_{j=0}^k \delta_j M(t_{n-j})\alpha_{n-j} + A\Big(\sum_{j=1}^k\gamma_j \alpha_{n-j}\Big)\alpha_n = 0, \qquad (n \geq k).
\end{equation}
For more details we refer to \cite{AkrivisLubich_quasilinBDF}.

\medskip
Instead of \eqref{eq: ODE system} let us consider again the perturbed problem \eqref{eq: pertubated ODE}. By substituting the true solution $\widetilde{\alpha}\t$ of the perturbed problem into the BDF method \eqref{def: BDF}, we obtain
\begin{equation*}
    \frac{1}{\tau} \sum_{j=0}^k \delta_j M(t_{n-j})\widetilde{\alpha}_{n-j} + A(\widetilde{\alpha}_n)\widetilde{\alpha}_n = -d_n, \qquad (n \geq k).
\end{equation*}
By introducing the error $e_n = \alpha_n - \widetilde{\alpha}(t_n)$, multiplying by $\tau$, and by subtraction we have the error equation
\begin{equation*}
    \sum_{j=0}^k \delta_j M_{n-j} e_{n-j} + \tau A(\alpha_n) e_n + \tau \big( A(\alpha_n) - A(\widetilde{\alpha}_n) \big) \widetilde{\alpha}_n = \tau d_n, \quad (n \geq k).
\end{equation*}
In the linearly implicit case we obtain:
\begin{equation*}
    \sum_{j=0}^k \delta_j M_{n-j} e_{n-j} + \tau A\Big(\sum_{j=1}^k\gamma_j \alpha_{n-j}\Big) e_n + \tau \Big( A\Big(\sum_{j=1}^k\gamma_j \alpha_{n-j}\Big) - A\Big(\sum_{j=1}^k\gamma_j \widetilde{\alpha}_{n-j}\Big) \Big) \widetilde{\alpha}_n = \tau \hat{d}_n, \quad (n \geq k),
\end{equation*}
where $\hat{d}_n$ have similar properties as $d_n$, therefore it will be also denoted by $d_n$.

%

The stability results for BDF methods are the following.
\begin{lemma}
\label{lemma: BDF stability}
    For a $k$-step implicit or linearly implicit BDF method with $k\leq5$ there exists a $\tau_0 >0$, 
    \st\ for $\tau\leq\tau_0$ and $t_n=n \tau\leq T$, that the error $e_n$ is bounded by
    \begin{equation*}
        \disp |e_n|_{\BM_n}^2 + \tau \sum_{j=k}^n |e_j|_{\BA_j}^2 \leq C \tau \sum_{j=k}^n \|d_j\|_{\ast, t_j}^2 + C \max_{0\leq i \leq k-1} |e_i|_{\BM_i}^2
    \end{equation*}
    where $\|w\|_{\ast, t}^2=w^T(\BA\t+M\t)\inv w$. The constant $C$ is
    independent of $h, \tau$ and $n$ (but depends on $\m$, $\M$, $L$, $\mu$, $\kappa$ and $T$).
\end{lemma}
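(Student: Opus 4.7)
The strategy is to mimic the linear-case proof of \cite[Lemma~4.1]{LubichMansourVenkataraman_bdsurf}, replacing scalar-product manipulations by the energy/multiplier pair \emph{Dahlquist $G$-stability + Nevanlinna--Odeh multiplier} $\eta_k$ (well-defined for $k\le 5$), and absorbing the new quasilinear terms via the Lipschitz-type estimate \eqref{Lipschitz}. First, I would rewrite the implicit error equation by splitting the nonlinear contribution into a part tested with $A(\alpha_n)$ (so that ellipticity \eqref{ellipticity} applies) and the defect $\bigl(A(\alpha_n)-A(\widetilde{\alpha}_n)\bigr)\widetilde{\alpha}_n$. The crucial point is that $\widetilde{\alpha}_n$ represents (via $\Pt u$) a function in $\S(t_n)$, so that \eqref{Lipschitz} is applicable and yields
\begin{equation*}
\big\|\bigl(A(\alpha_n)-A(\widetilde{\alpha}_n)\bigr)\widetilde{\alpha}_n\big\|_{\ast,t_n} \leq \delta |e_n|_{\BA_n} + L\,|e_n|_{\BM_n},
\end{equation*}
with $\delta$ chosen so small that this first term is absorbed by the ellipticity constant $\m$ in the stiffness contribution.

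Next, I would test the error equation with $\tau(e_n-\eta_k e_{n-1})$ in the $M_n$ inner product, sum from $j=k$ to $n$, and rearrange the time-difference part $\sum_{j=0}^k \delta_j M_{n-j} e_{n-j}$. Using Dahlquist's equivalence, the corresponding quadratic form controls the telescoping $G$-norm $\|E_n\|_G^2 - \|E_{n-1}\|_G^2$ with $E_n=(e_n,\dots,e_{n-k+1})^T$, up to commutator terms $e_{n-j}^T(M_n-M_{n-j})e_{n-\ell}$; these are bounded by $c\tau |e_{n-j}|_{\BM_{n-j}}|e_{n-\ell}|_{\BM_{n-\ell}}$ using Lemma~\ref{lemma: discrete sobolev norm estimates}, and will later be handled by discrete Gronwall. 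On the stiffness side, the ellipticity of $A(\alpha_n)$ combined with the multiplier identity gives $\tau(1-\eta_k)\m|e_n|_{\BA_n}^2$ minus a commutator term $\tau e_n^T\bigl(A(\alpha_n)(t_n)-A(\alpha_{n-1})(t_{n-1})\bigr)e_{n-1}$, again controlled by Lemma~\ref{lemma: discrete sobolev norm estimates}. The right-hand side $\tau d_n$ is estimated using Young's inequality in the $\|\cdot\|_{\ast,t_n}$ norm.

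For the linearly implicit variant \eqref{def: linearly implicit BDF} I would write $A\bigl(\sum_j \gamma_j\alpha_{n-j}\bigr)-A\bigl(\sum_j \gamma_j\widetilde{\alpha}_{n-j}\bigr)$ and again apply \eqref{Lipschitz} at the extrapolated argument: this produces, in place of $|e_n|_{H\t}$, the contribution $\bigl|\sum_{j=1}^k \gamma_j e_{n-j}\bigr|_{\BM_{n-1}}$, which by equivalence of norms on consecutive time levels (Lemma~\ref{lemma: discrete sobolev norm estimates}) is controlled by $\sum_{j=1}^k |e_{n-j}|_{\BM_{n-j}}$; this is exactly the type of history term that discrete Gronwall can absorb. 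The linearly implicit stiffness form $A\bigl(\sum_j\gamma_j\alpha_{n-j}\bigr)$ is still elliptic by Assumption~\ref{assumptions}(b), so the ellipticity step goes through unchanged.

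Finally, summing over $j=k,\dots,n$, the $G$-norm telescopes, the $\tau\m(1-\eta_k)\sum|e_j|_{\BA_j}^2$ term survives after absorption of the Lipschitz $\delta$-part, and all remaining ``bad'' contributions have the form $\tau\sum |e_j|_{\BM_j}^2$ with constants depending on $L,\mu,\kappa,\M$. A discrete Gronwall argument on $|e_n|_{\BM_n}^2$ (valid for $\tau\le\tau_0$ small enough that the Gronwall factor is finite on $[0,T]$) yields the asserted bound, with the starting error $\max_{0\le i\le k-1}|e_i|_{\BM_i}^2$ appearing through the initial $G$-norm. The main obstacle I anticipate is the bookkeeping for the linearly implicit case: one has to verify that after extrapolation the history terms produced by \eqref{Lipschitz} at $\sum_j\gamma_j\widetilde{\alpha}_{n-j}$ (which a priori need not lie in $\S(t_n)$ exactly) can be controlled by $\widetilde{\alpha}_n\in\S(t_n)$ up to $O(\tau)$ using smoothness of the exact Ritz-mapped solution.
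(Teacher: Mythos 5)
Your proposal follows essentially the same approach as the paper's (omitted) proof, which simply references the combination of \cite[Lemma~4.1]{LubichMansourVenkataraman_bdsurf} ($G$-stability and Nevanlinna--Odeh multiplier for the linear ESFEM--BDF case) with \cite[Theorem~1 and Section~6]{AkrivisLubich_quasilinBDF} (Lipschitz-type quasilinear terms and linearly implicit variant), and your plan reconstructs exactly that combination. The one small misreading is in your closing ``obstacle'': condition \eqref{Lipschitz} constrains only the argument $u=\widetilde{\alpha}_n$ (whose required $W^{1,\infty}$-type bound comes from Lemma~\ref{lemma: Ritz regularity lemma}), not $w_1,w_2$, so the extrapolated values $\sum_j\gamma_j\widetilde{\alpha}_{n-j}$ need not themselves lie in $\S(t_n)$ and no extra $O(\tau)$ comparison is required.
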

\begin{proof}
The proof follows the proof of Lemma 4.1 from \cite{LubichMansourVenkataraman_bdsurf} (using $G$-stability from \cite{Dahlquist} and multiplier techniques from \cite{NevanlinnaOdeh}), except in those terms where the nonlinearity appears. For their estimates we refer to Theorem~1 in \cite{AkrivisLubich_quasilinBDF}. For linearly implicit methods we follow \cite[Section~6]{AkrivisLubich_quasilinBDF}. Therefore these proofs are also omitted.
\end{proof}
%
%

Again, using the above stability results, the error bounds are following analogously as in \cite[Theorem~5.1]{LubichMansourVenkataraman_bdsurf} (or \cite[Theorem~5.3]{diss_Mansour}).
\begin{theorem}
\label{thm: BDF error estimates}
     Consider the quasilinear parabolic problem \eqref{eq: strong form}, having a solution in $\S\t$ for $0\leq t \leq T$. Couple the evolving surface finite element method as space discretization with time discretization by a $k$-step implicit or linearly implicit backward difference formula of order $k\leq5$. Assume that the Ritz map of the solution has \co\ discrete material derivatives up to order $k+1$. Then there exists $\tau_0>0$, independent of $h$, \st\ for $\tau \leq \tau_0$, for the error $E_h^n=U_h^n-\P u(.,t_n)$ the following estimate holds for $t_n=n\tau \leq T$:
    {\setlength\arraycolsep{.13889em}
    \begin{eqnarray*}
        \disp \|E_h^n\|_{L^2(\Ga_h(t_n))} &+& \Big( \tau \sum_{j=1}^n \|\nb_{\Ga_h(t_j)} E_h^j \|_{L^2(\Ga_h(t_j))}^2 \Big)^{\frac{1}{2}} \\
        \disp \leq C \tilde{\beta}_{h,k} \tau^{k} &+& \Big( \tau \sum_{j=1}^n \|R_h(.,t_j) \|_{H^{-1}(\Ga_h(t_j))}^2 \Big)^{\frac{1}{2}} + C \max_{0\leq i \leq k-1} \|E_h^i\|_{L^2(\Ga_h(t_i))},
    \end{eqnarray*}}
    where the constant $C$ is independent of $h, n$ and $\tau$ (but depends on $\m$, $\M$, $L$, $\mu$, $\kappa$ and $T$). Furthermore
    \begin{equation*}
        \disp \tilde{\beta}_{h,k}^2 = \int_0^T \sum_{\ell=1}^{k+1} \| (\mat_h)^{(\ell)} (\P u)(.,t) \|_{L^2(\Ga_h(t))} \d t.
    \end{equation*}
\end{theorem}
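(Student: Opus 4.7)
The plan is to apply the stability result of Lemma~\ref{lemma: BDF stability} to a carefully chosen error between the fully discrete coefficient vector and the nodal vector of the Ritz map of the exact solution. More precisely, I would let $\alpha^{\ast}(t)\in\R^N$ denote the nodal coefficients of $\widetilde{\mathcal{P}}_h u(\cdot,t)$ and define $e_n := \alpha_n - \alpha^{\ast}(t_n)$, so that the associated ESFEM function is exactly $E_h^n = U_h^n - \mathcal{P}_h u(\cdot,t_n)$, and $|e_n|_{\BM_n} = \|E_h^n\|_{L^2(\Ga_h(t_n))}$, $|e_n|_{\BA_n} = \|\nb_{\Ga_h(t_n)} E_h^n\|_{L^2(\Ga_h(t_n))}$.

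Next I would substitute $\alpha^{\ast}$ into the BDF scheme \eqref{def: BDF} (respectively its linearly implicit variant \eqref{def: linearly implicit BDF}) and derive an error equation of the perturbed form used in Lemma~\ref{lemma: BDF stability}. The right-hand side defect $d_n$ naturally splits as $d_n = d_n^{\textnormal{time}} + d_n^{\textnormal{space}}$, where $d_n^{\textnormal{time}}$ is the BDF consistency error for $\alpha^{\ast}$ and $d_n^{\textnormal{space}}$ is precisely (the vector representation of) the semidiscrete residual $R_h(\cdot,t_n)$ obtained when inserting $\mathcal{P}_h u$ into the semidiscrete weak equation \eqref{eq: semidiscrete problem}. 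The hypothesis that $u\in\S(t)$ ensures through Lemma~\ref{lemma: Ritz regularity lemma} that $\mathcal{P}_h u$ remains in a neighbourhood allowing us to invoke the Lipschitz-type estimate \eqref{Lipschitz}; this is needed to absorb the extra commutator term $\bigl(A(\alpha_n)-A(\widetilde{\alpha}_n)\bigr)\widetilde{\alpha}_n$ (and its linearly implicit analogue) into the stability framework, exactly as was done in Lemma~\ref{lemma: BDF stability}.

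The time-consistency contribution $d_n^{\textnormal{time}}$ is handled by a standard Taylor expansion of the BDF operator: since the method has order $k$, a Peano-kernel representation yields
\begin{equation*}
\|d_n^{\textnormal{time}}\|_{\ast,t_n}^2 \leq C\tau^{2k-1}\int_{t_{n-k}}^{t_n}\sum_{\ell=1}^{k+1}\|(\mat_h)^{(\ell)}(\mathcal{P}_h u)(\cdot,t)\|_{L^2(\Ga_h(t))}^2\,\d t,
\end{equation*}
so that $\tau\sum_{j=k}^n \|d_j^{\textnormal{time}}\|_{\ast,t_j}^2 \leq C\tilde{\beta}_{h,k}^2 \tau^{2k}$, using the assumed continuity of the discrete material derivatives of $\mathcal{P}_h u$ up to order $k+1$. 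For the spatial contribution, by the definition of $\|\cdot\|_{\ast,t}$ and the equivalence of $|\cdot|_{\BA_n}+|\cdot|_{\BM_n}$ with the $H^1(\Ga_h(t_n))$ norm through Lemma~\ref{lemma: Linfty aqui estimate}, one obtains $\|d_n^{\textnormal{space}}\|_{\ast,t_n}\leq C\|R_h(\cdot,t_n)\|_{H^{-1}(\Ga_h(t_n))}$.

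Inserting these two bounds into the conclusion of Lemma~\ref{lemma: BDF stability}, and estimating the starting-value term $\max_{0\leq i\leq k-1}|e_i|_{\BM_i}$ by $\max_{0\leq i\leq k-1}\|E_h^i\|_{L^2(\Ga_h(t_i))}$, yields the claimed inequality after taking square roots. The main obstacle I anticipate is making the error equation fit cleanly into the perturbed-BDF framework of Lemma~\ref{lemma: BDF stability}: the nonlinear commutator term depends on $\widetilde{\alpha}_n = \alpha^{\ast}(t_n)$, and one must check via Lemma~\ref{lemma: Ritz regularity lemma} and Assumption~\ref{assumptions} that the constant $L$ in \eqref{Lipschitz} is uniformly controlled along the trajectory of the Ritz map, so that the $\tau_0$ threshold of the stability lemma is indeed $h$-independent.
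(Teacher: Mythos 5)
Your proposal follows essentially the same route the paper takes: the paper itself does not spell out this proof but delegates it to \cite{LubichMansourVenkataraman_bdsurf} (Theorem~5.1) and \cite{diss_Mansour} (Theorem~5.3), and your plan --- set $e_n=\alpha_n-\widetilde{\alpha}(t_n)$ with $\widetilde{\alpha}$ the Ritz-map nodal vector, invoke Lemma~\ref{lemma: BDF stability}, split the defect into BDF consistency error and spatial residual, and bound each via a Peano-kernel argument and the $H^{-1}$ norm of $R_h$ respectively --- is exactly that standard argument. One small imprecision worth noting: the identification $\|M_n r_n\|_{\ast,t_n}=\|R_h(\cdot,t_n)\|_{H^{-1}(\Ga_h(t_n))}$ is an exact algebraic identity, since $\phi^T M_n r_n=\langle R_h,\phi_h\rangle_{L^2(\Ga_h(t_n))}$ and $\phi^T(\BA_n+\BM_n)\phi=\|\phi_h\|_{H^1(\Ga_h(t_n))}^2$ by the definitions of $\BM_n$ and $\BA_n$; it requires no appeal to Lemma~\ref{lemma: Linfty aqui estimate}, which concerns norm equivalence under the lift from $\Ga_h(t)$ to $\Ga(t)$, not the vector-function dictionary on the discrete surface.
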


\section{Error bounds for the fully discrete solutions}
\label{section: error bounds}

We follow the approach of \cite[Section~5]{LubichMansourVenkataraman_bdsurf} by defining the FEM residual $R_h(.,t) = \sum_{j=1}^N r_j\t \chi_j(.,t)\in S_h\t$ as
\begin{equation}\label{eq: residual}
    \int_{\Ga_h}\!\!\!\!  R_h \phi_h = \diff \int_{\Ga_h}\!\!\!\!  \P u \phi_h +\int_{\Ga_h}\! \A(\P u) \nbg (\P u) \cdot \nbg \phi_h - \int_{\Ga_h}\!\!\!\! (\P u)  \mat_h \phi_h,
\end{equation}
where $\phi_h\in S_h\t$, and the Ritz map of the true solution $u$ is given as
\begin{equation*}
    \Pt u(.,t) = \sum_{j=1}^N \widetilde{\alpha}_j\t \chi_j(.,t).
\end{equation*}
The above problem is equivalent to the ODE system with the vector $r\t=(r_j\t)\in\R^N$:
\begin{equation*}
    \diff \big(M\t \widetilde{\alpha}\t\big) + A(\widetilde{\alpha}\t) \widetilde{\alpha}\t = M\t r\t,
\end{equation*}
which is the perturbed ODE system \eqref{eq: pertubated ODE}.

\subsection{Bound of the semidiscrete residual}

We now show the optimal second order estimate of the residual $R_h$.

\begin{theorem}
\label{thm: res bound}
    Let $u$, the solution of the parabolic problem, be in $\S\t$ for $0\leq t \leq T$. Then there exists a constant $C>0$ and $h_0>0$, \st\ for all $h\leq h_0$ and $t\in[0,T]$, the finite element residual $R_h$ of the Ritz map is bounded as
    \begin{equation*}
        \|R_h\|_{H\inv(\Ga_h\t)} \leq C h^2.
    \end{equation*}
\end{theorem}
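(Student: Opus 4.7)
Fix an arbitrary $\phi_h \in S_h(t)$ with lift $\varphi_h = \phi_h^l \in S_h^l(t)$. The plan is to test the residual definition \eqref{eq: residual} against $\phi_h$, rewrite every integral over $\Ga_h(t)$ as an integral over $\Gat$ up to $O(h^2)$ geometric defects, subtract the weak form satisfied by the exact solution, and bound each remaining difference with the Ritz-type estimates from Section~\ref{section: Ritz}.

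First, I apply the discrete transport lemma to the $\diff$-term in \eqref{eq: residual}; the $m_h(\Pt u, \mat_h\phi_h)$ piece cancels with the last term of the residual, leaving
\begin{equation*}
  m_h(R_h, \phi_h) = m_h(\mat_h \Pt u, \phi_h) + g_h(V_h; \Pt u, \phi_h) + a_h(\Pt u; \Pt u, \phi_h).
\end{equation*}
Lemma \ref{lemma: estimation of forms} transfers each of these three forms to its smooth-surface analogue at the cost of $c h^2 \|\varphi_h\|_{H^1(\Gat)}$. From \eqref{eq: weak form} and the continuous transport lemma (Lemma \ref{lemma: transport prop}) the exact solution satisfies
\begin{equation*}
  m(\mat u, \varphi_h) + g(v; u, \varphi_h) + a(u; u, \varphi_h) = 0,
\end{equation*}
so subtracting this reduces the estimation of $m_h(R_h,\phi_h)$ to controlling the three differences
\begin{equation*}
  m(\mat_h \P u - \mat u, \varphi_h), \qquad g(v; \P u - u, \varphi_h), \qquad a(\P u; \P u, \varphi_h) - a(u; u, \varphi_h).
\end{equation*}

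The first difference is handled by the splitting $\mat_h \P u - \mat u = \mat_h(\P u - u) + (v_h - v) \cdot \nabla u$, combined with Theorem \ref{thm: Ritz mat error} for $k=1$ and the standard $O(h^2)$ velocity approximation $\|v_h - v\|_{L^\infty(\Gat)} \leq c h^2$. The $g$-term is controlled directly by Theorem \ref{thm: Ritz error}. The $a$-difference is the critical step; I split it as
\begin{equation*}
  a(\P u; \P u, \varphi_h) - a(u; u, \varphi_h) = \bigl[a(\P u; \P u, \varphi_h) - a(u; \P u, \varphi_h)\bigr] + a(u; \P u - u, \varphi_h).
\end{equation*}
For the second summand I rewrite $a(u;\cdot,\cdot) = \aast(u;\cdot,\cdot) - m(\cdot,\cdot)$ and apply the pseudo-Galerkin orthogonality of Lemma \ref{lemma: galerkin orthogonality} with $\xi = u \in \S(t)$, noting that the subtracted $m$-piece is $O(h^2)$ by Theorem \ref{thm: Ritz error}. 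For the first summand, the Lipschitz bound on $\A$ from Assumption \ref{assumptions}(a) gives
\begin{equation*}
  \bigl|a(\P u; \P u, \varphi_h) - a(u; \P u, \varphi_h)\bigr| \leq \ell \, \|\P u - u\|_{L^2(\Gat)} \, \|\nbg \P u\|_{L^\infty(\Gat)} \, \|\nbg \varphi_h\|_{L^2(\Gat)},
\end{equation*}
and here Lemma \ref{lemma: Ritz regularity lemma} enters decisively by providing $\|\nbg \P u\|_{L^\infty(\Gat)} \leq c \|u\|_{W^{2,\infty}(\Gat)} \leq c r$. Combined with Theorem \ref{thm: Ritz error} this contribution is again $O(h^2)\|\varphi_h\|_{H^1(\Gat)}$. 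Finally, the norm equivalence in Lemma \ref{lemma: Linfty aqui estimate} lets me pass from $\|\varphi_h\|_{H^1(\Gat)}$ back to $\|\phi_h\|_{H^1(\Ga_h(t))}$, and taking the supremum over $\phi_h \in S_h(t)$ yields the claimed bound.

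The main obstacle is the nonlinear $a$-difference: without the $L^\infty$ control of $\nbg \P u$ furnished by Lemma \ref{lemma: Ritz regularity lemma}, the Lipschitz argument on $\A$ would not close at order $h^2$ (only at order $h^{2-m/2}$ via an inverse estimate), and one would lose the optimal rate. Every other contribution is a routine application of the geometric perturbation Lemma \ref{lemma: estimation of forms} and of the Ritz error estimates of Theorems~\ref{thm: Ritz error} and \ref{thm: Ritz mat error}.
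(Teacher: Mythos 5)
Your overall route mirrors the paper's: apply the discrete transport lemma to the residual equation, transfer the discrete forms to the smooth surface via Lemma~\ref{lemma: estimation of forms}, subtract the weak form, split off the nonlinear $\A$-difference and close it with the Lipschitz bound plus Lemma~\ref{lemma: Ritz regularity lemma}. Replacing the paper's direct appeal to the Ritz definition \eqref{eq: Ritz definition} by the pseudo-Galerkin orthogonality of Lemma~\ref{lemma: galerkin orthogonality} (after writing $a=a^{\ast}-m$) is a legitimate and equivalent shortcut, and your identification of the Ritz regularity lemma as the decisive ingredient is exactly right.

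There is, however, a concrete gap in your treatment of the advection term. Lemma~\ref{lemma: estimation of forms} relates $g_h(V_h;\cdot,\cdot)$ to $g(v_h;\cdot,\cdot)$ — the \emph{lifted interpolated} velocity $v_h$, not the exact velocity $v$ — so after the geometric transfer the residual carries $g(v_h;\P u,\varphi_h)+O(h^2)$. You then subtract the weak form written with $v$, namely $m(\mat u,\varphi_h)+g(v;u,\varphi_h)+a(u;u,\varphi_h)=0$. The resulting $g$-difference is therefore $g(v_h;\P u,\varphi_h)-g(v;u,\varphi_h)$, not $g(v;\P u-u,\varphi_h)$ as you list it, and the extra piece
\begin{equation*}
g(v_h-v;u,\varphi_h)=\int_{\Gat}\bigl(\nbg\cdot(v_h-v)\bigr)\,u\,\varphi_h
\end{equation*}
is not covered by Theorem~\ref{thm: Ritz error}. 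A naive bound costs only $\Lnorm[\Gat][\infty]{\nbg(v_h-v)}=O(h)$, which would destroy the claimed $O(h^2)$ rate. Notice you treated the analogous $v_h-v$ contribution carefully in the $m$-term (splitting $\mat_h\P u-\mat u=\mat_h(\P u-u)+(v_h-v)\cdot\nbg u$, which only needs $\Lnorm[\Gat][\infty]{v_h-v}=O(h^2)$), but the same care is missing for $g$. The paper avoids the problem entirely by applying the transport identity on the exact-solution side with the \emph{discrete} material derivative and $v_h$, so that the $g$-terms match and the entire $v_h-v$ discrepancy appears only as $m\bigl(u,(v_h-v)\cdot\nbg\varphi_h\bigr)$, which is plainly $O(h^2)\|\varphi_h\|_{H^1(\Gat)}$. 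To repair your version you should either adopt that bookkeeping, or integrate $g(v_h-v;u,\varphi_h)$ by parts on the closed surface so that only $\Lnorm[\Gat][\infty]{v_h-v}=O(h^2)$ enters.
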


\begin{proof}
(a) We start by applying the discrete transport property to the residual equation \eqref{eq: residual}
{\setlength\arraycolsep{.13889em}
\begin{eqnarray*}
    m_h(R_h,\phi_h) &=& \diff m_h(\Pt u,\phi_h) + a_h(\Pt u;\Pt u,\phi_h) - m_h(\Pt u,\mat_h\phi_h) \\
    &=& m_h(\mat_h\Pt u,\phi_h) + a_h(\Pt u;\Pt u,\phi_h) + g_h(V_h;\Pt u,\phi_h).
\end{eqnarray*}}

(b) We continue by the transport property with discrete material derivatives from Lemma \ref{lemma: transport prop}, but for the weak form, with $\vphi:=\vphi_h=(\phi_h)^l$:
{\setlength\arraycolsep{.13889em}
\begin{eqnarray*}
    0 &=& \diff m(u,\vphi_h) + a(u;u,\vphi_h) - m(u,\mat\vphi_{h}) \\
    &=& m(\mat_h u,\vphi_h) + a(u;u,\vphi_h) + g(v_h;u,\vphi_h) +
    m(u,\mat_{h}\varphi_{h} - \mat \varphi_{h}).
\end{eqnarray*}}

(c) Subtraction of the two equations, using the definition of the Ritz map with $\xi=u$
in \eqref{eq: Ritz definition}, i.e.\ $$a_{h}^{*}( u^{-l}; \Pt u, \phi_{h}) = a^{*}(u; u, \varphi_{h}),$$ and using that
\[
    \mat_{h}\varphi_{h} - \mat \varphi_{h} = (v_{h}-v) \cdot \nabla_{\Gamma}\varphi_{h}
\]
holds, we obtain
{\setlength\arraycolsep{.13889em}
\begin{eqnarray*}
    m_h(R_h,\phi_h) &=& m_h(\mat_h\Pt u,\phi_h) - m(\mat_h u,\vphi_h) \\
                    &+& g_h(V_h;\Pt u,\phi_h) - g(v_h;u,\vphi_h) \\
                    &+& a^\ast_{h}(\Pt u;  \Pt u ,\phi_h) - a^\ast_{h}(u^{-l}; \Pt u,\phi_h) \\
                    &+& m(u,\vphi_h) - m_h(\Pt u, \phi_h) \\
                    &+& m(u, (v_{h}-v)  \cdot \nabla_{\Gamma}\varphi_{h}).
\end{eqnarray*}}
All the pairs can be easily estimated separately as $c h^2 \|\vphi_h\|_{L^2(\Gat)}$, by combining the estimates of Lemma \ref{lemma: estimation of forms}, and Theorem \ref{thm: Ritz error} and \ref{thm: Ritz mat error}, except the third, and the last term.

The term containing the velocity difference $(v_h-v)$ can be estimated, using $|v_h-v| + h |\nbg (v_h-v)| \leq ch^2$ from \cite[Lemma~5.6]{DziukElliott_L2}, as $c h^2 \|\nbg \vphi_h\|_{L^2(\Gat)}$.

The nonlinear terms are rewritten as:
\begin{align*}
    a_{h}^{\ast}(\Pt u;  \Pt u ,\phi_h) - a^\ast_{h}(u^{-l}; \Pt u,\phi_h)  & =
    a_{h}^{\ast}(\Pt u; \Pt u, \phi_{h} ) - a^{\ast}(\mathcal{P}_{h}u; \mathcal{P}_{h}u, \varphi_{h})\\
    & + a^{\ast}(\mathcal{P}_{h}u; \mathcal{P}_{h}u, \varphi_{h}) -
    a^{*}(u; \mathcal{P}_{h}u, \varphi_{h}) \\
    & + a^{\ast}(u; \mathcal{P}_{h}u, \varphi_{h}) - a_{h}^{\ast}(u^{-l}; \Pt u, \phi_{h})
\end{align*}
For the first and the third term Lemma~\ref{lemma: estimation of forms} provides an upper bound $c h^{2} \|\nbg \vphi_h\|_{L^2(\Gat)}$ (similarly like before).

Finally, using Lemma~\ref{lemma: Ritz regularity lemma} we obtain, similarly to \eqref{Lipschitz}, that the second term can be bounded as
\begin{align*}
    &\big|a^\ast(\mathcal{P}_{h} u; \mathcal{P}_{h} u ,\phi_h) - a^\ast(u; \mathcal{P}_{h} u ,\phi_h)\big| \\
    &= \Big| \int_{\Ga\t} \big(\A(\mathcal{P}_{h} u) - \A(u)\big) \nbg \mathcal{P}_{h} u \cdot \nbg \vphi_h \Big| \\
    &\leq c \ell \|\mathcal{P}_{h} u - u\|_{L^2(\Ga\t)} \|\nbg \mathcal{P}_{h} u\|_{L^\infty(\Ga\t)} \|\nbg \vphi_h\|_{L^2(\Ga\t)} \\
    &\leq c \ell \|\mathcal{P}_{h} u - u\|_{L^2(\Ga\t)} \ c \ r \ \|\nbg \vphi_h\|_{L^2(\Ga\t)} \\
    &\leq c \ell r\, h^{2} \|\nbg \vphi_h\|_{L^2(\Gat)}.
\end{align*}
\end{proof}

\subsection{Error estimates for the full discretizations}

We compare the lifted fully discrete numerical solution $u_h^n:=(U_h^n)^l$ with the exact solution $u(.,t_n)$ of the evolving surface PDE \eqref{eq: strong form}, where $U_h^n = \sum_{j=1}^N \alpha_j^n\chi_j(.,t)$, where the vectors $\alpha^n$ are generated by a Runge--Kutta or a BDF method.

\begin{theorem}[ESFEM and R--K]\label{thm: ESFEM RK error}
    Consider the evolving surface finite element method as space discretization of the quasilinear parabolic problem \eqref{eq: strong form}, with time discretization by an $s$-stage implicit Runge--Kutta method satisfying Assumption \ref{assump: RK method assumptions}. Let $u$ be a sufficiently smooth solution of the problem, which satisfies $u(.,t) \in \S\t$ ($0\leq t \leq T$), and assume that the initial value is approximated as
    \begin{equation*}
        \disp \|u_h^0 - (\P u)(.,0)\|_{L^2(\Ga(0))} \leq C_0 h^2.
    \end{equation*}
    Then there exists $h_0>0$ and $\tau_0>0$, \st\ for $h\leq h_0$ and $\tau \leq \tau_0$, the following error estimate holds for $t_n=n\tau \leq T$:
    \begin{equation*}
        \|u_h^n - u(.,t_n)\|_{L^2(\Ga(t_n))} + h\Big( \! \tau \!\! \sum_{j=1}^n \|\nb_{\Ga(t_j)} (u_h^j - u(.,t_j))\|_{L^2(\Ga(t_j))}^2 \! \Big)^{\frac{1}{2}} \!\! \leq  C \big( \tau^{q+1} \! + h^2 \big).
    \end{equation*}
    The constant $C$ is independent of $h, \ \tau$ and $n$, but depends on $\m, \ \M, \ L, \ \mu, \ \kappa$ and $T$.
\end{theorem}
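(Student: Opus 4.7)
The plan is to use the standard Ritz-projection splitting
\[
    u_h^n - u(\cdot,t_n) = \bigl( u_h^n - (\P u)(\cdot,t_n) \bigr) + \bigl( (\P u)(\cdot,t_n) - u(\cdot,t_n) \bigr),
\]
and to bound the two pieces separately. The second summand is immediately handled by the Ritz-map estimate of Theorem~\ref{thm: Ritz error}, which under the assumed $H^2$-regularity of $u$ gives $O(h^2)$ in $L^2$ and $O(h)$ in $H^1$; the factor $h$ multiplying the gradient sum in the theorem's statement turns this latter contribution back into $O(h^2)$, so the geometric error sits comfortably at the claimed order.

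For the first summand, its lift is exactly the quantity $E_h^n$ already estimated by Theorem~\ref{thm: RK error estimates}, so the task reduces to verifying the three inputs required by that theorem. The initial error $\|E_h^0\|_{L^2(\Ga_h(0))}$ is $O(h^2)$ by the hypothesis on $u_h^0$ (combined with Theorem~\ref{thm: Ritz error} applied at $t=0$). The discrete $H^{-1}$-norms of the semidiscrete residual are $O(h^2)$ uniformly on $[0,T]$ by Theorem~\ref{thm: res bound}, and hence the $\ell^2_\tau H^{-1}$-sum appearing in Theorem~\ref{thm: RK error estimates} is $O(h^2)$. The constant $\tilde\beta_{h,q}$ involves $L^2$-norms of up to $(q+2)$-th discrete material derivatives of $\P u$; these are controlled by iterating Theorem~\ref{thm: Ritz mat error} together with the assumed smoothness of the exact solution $u$, so $\tilde\beta_{h,q}$ is bounded independently of $h$. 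Feeding these three ingredients into Theorem~\ref{thm: RK error estimates} produces the $O(\tau^{q+1}+h^2)$ bound for $E_h^n$ on the discrete surface, and the norm-equivalence Lemma~\ref{lemma: Linfty aqui estimate} transfers the estimate to the smooth surface with equivalent constants.

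Combining the two pieces by the triangle inequality yields the stated bound. The main obstacle I anticipate is checking the regularity step behind $\tilde\beta_{h,q}$: one must repeatedly differentiate the defining identity of the Ritz map along the surface velocity, derive higher-order analogues of Theorem~\ref{thm: Ritz mat error}, and make sure the resulting constants remain independent of the argument $\xi$ (which is plugged in as $u$ itself along the trajectory). Here the Lipschitz structure of $\A$, the uniform ellipticity in Assumption~\ref{assumptions}, and the smoothness of $u$ all have to cooperate so that none of the intermediate bounds picks up a hidden $h$-dependence. Once this bookkeeping is in place, the remainder of the proof is a mechanical assembly of the stability, residual, Ritz-map, and geometric-perturbation estimates already established earlier.
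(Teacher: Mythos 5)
Your proposal matches the paper's own proof exactly: the same Ritz-projection decomposition, with the second piece handled by Theorem~\ref{thm: Ritz error}, and the first piece fed into the Runge--Kutta stability/error bound of Theorem~\ref{thm: RK error estimates} using the residual bound (Theorem~\ref{thm: res bound}), the Ritz material-derivative estimates (Theorem~\ref{thm: Ritz mat error}) for $\tilde\beta_{h,q}$, and the assumption on the initial value, then transferring between $\Ga_h$ and $\Ga$ via norm equivalence. The ``main obstacle'' you anticipate is precisely what Theorem~\ref{thm: Ritz mat error} supplies, so the assembly is complete as you describe.
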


\begin{theorem}[ESFEM and BDF]\label{thm: ESFEM BDF error}
    Consider the evolving surface finite element method as space discretization of the quasilinear parabolic problem \eqref{eq: strong form}, with time discretization by a $k$-step implicit or linearly implicit backward difference formula of order $k\leq5$. Let $u$ be a sufficiently smooth solution of the problem, which satisfies $u(.,t) \in \S\t$ ($0\leq t \leq T$), and assume that the starting values are satisfying
    \begin{equation*}
        \disp \max_{0 \leq i \leq k-1} \| u_h^i - (\P u)(.,t_i) \|_{L^2(\Ga(0))} \leq C_0 h^2.
    \end{equation*}
    Then there exists $h_0>0$ and $\tau_0>0$, \st\ for $h\leq h_0$ and $\tau \leq \tau_0$, the following error estimate holds for $t_n=n\tau \leq T$:
    \begin{equation*}
        \disp \|u_h^n - u(.,t_n)\|_{L^2(\Ga(t_n))} + h\Big( \! \tau \!\! \sum_{j=1}^n \|\nb_{\Ga(t_j)} (u_h^j - u(.,t_j))\|_{L^2(\Ga(t_j))}^2 \! \Big)^{\frac{1}{2}} \leq C \big( \tau^{k} \! + h^2 \big).
    \end{equation*}
    The constant $C$ is independent of $h, \ \tau$ and $n$, but depends on $\m, \ \M, \ L, \ \mu, \ \kappa$ and $T$.
\end{theorem}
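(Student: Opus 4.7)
The plan is to mimic the R--K case of Theorem~\ref{thm: ESFEM RK error} and decompose the total error via the Ritz map, then combine the semidiscrete residual bound from Theorem~\ref{thm: res bound} with the BDF stability/error estimate from Theorem~\ref{thm: BDF error estimates}. Concretely, for $t_n = n\tau \le T$ I would write
\[
 u_h^n - u(\cdot,t_n) = \bigl(u_h^n - (\P u)(\cdot,t_n)\bigr) + \bigl((\P u)(\cdot,t_n) - u(\cdot,t_n)\bigr) = e_h^n + \eta_h^n,
\]
and similarly for the spatial gradient. The second contribution $\eta_h^n$ is the pure Ritz error and is controlled directly by Theorem~\ref{thm: Ritz error} after passing through Lemma~\ref{lemma: Linfty aqui estimate}, giving the $Ch^2$ bound in both $L^2$ and the $h\|\nabla_{\Gamma}\cdot\|_{L^2}$ norm.

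For the first contribution $e_h^n$, which equals the lift of $E_h^n = U_h^n - (\Pt u)(\cdot,t_n)$, I would feed directly into Theorem~\ref{thm: BDF error estimates}. This is applicable because, by construction, the nodal-coefficient vector $\widetilde{\alpha}(t)$ of $\Pt u$ satisfies the perturbed system \eqref{eq: pertubated ODE} with defect $r(t)$ corresponding to the FEM residual $R_h$ defined in \eqref{eq: residual}; hence $E_h^n$ is exactly the error of the $k$-step BDF method applied to that perturbed ODE. Theorem~\ref{thm: BDF error estimates} then yields
\[
  \|E_h^n\|_{L^2(\Ga_h(t_n))} + \Bigl(\tau\sum_{j=1}^n \|\nbgh E_h^j\|_{L^2(\Ga_h(t_j))}^2\Bigr)^{1/2}
  \le C\tilde{\beta}_{h,k}\tau^k + C\Bigl(\tau\sum_{j=1}^n \|R_h(\cdot,t_j)\|_{H^{-1}(\Ga_h(t_j))}^2\Bigr)^{1/2} + C\max_{0\le i\le k-1}\|E_h^i\|_{L^2(\Ga_h(t_i))}.
\]
I would then bound each right-hand term separately: the residual terms by $Ch^2$ using Theorem~\ref{thm: res bound} (uniformly for $t_j\in[0,T]$), the starting-value term by the assumed $Ch^2$ bound together with the equivalence of norms (Lemma~\ref{lemma: Linfty aqui estimate}), and $\tilde{\beta}_{h,k}$ by a constant independent of $h$ by applying Theorem~\ref{thm: Ritz mat error} to the smooth material derivatives of $u$ (so that $\|(\mat_h)^{(\ell)}\P u\|_{L^2}$ is uniformly bounded in $h$ for $\ell\le k+1$, provided $u$ is sufficiently smooth).

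Finally, passing back to the smooth surface through Lemma~\ref{lemma: Linfty aqui estimate} translates the discrete-surface bounds on $E_h^n$ into bounds on $e_h^n$ on $\Gat$, and adding $\eta_h^n$ yields the claimed $C(\tau^k + h^2)$ estimate. The only step requiring genuine care, rather than bookkeeping, is the verification that $\tilde{\beta}_{h,k}$ is bounded uniformly in $h$ for the linearly implicit variant as well, since there the stability analysis from Lemma~\ref{lemma: BDF stability} (and hence the applicability of Theorem~\ref{thm: BDF error estimates}) depends on the extrapolated argument $\sum_j\gamma_j\alpha_{n-j}$ staying in a neighborhood where the Lipschitz-type estimate \eqref{Lipschitz} is valid; this is ensured by the inductive smallness of the error $e_h^n$ for $\tau\le\tau_0$ and $h\le h_0$, which is the standard bootstrap already used in \cite{AkrivisLubich_quasilinBDF,LubichMansourVenkataraman_bdsurf}.
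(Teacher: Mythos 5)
Your proposal follows the paper's own proof almost verbatim: the paper also decomposes $u_h^n - u(\cdot,t_n)$ into the fully-discrete-vs-Ritz-map error and the Ritz error, then invokes Theorem~\ref{thm: BDF error estimates}, the residual bound of Theorem~\ref{thm: res bound}, and the Ritz-map error estimates (Theorems~\ref{thm: Ritz error} and \ref{thm: Ritz mat error}) to bound the pieces. Your additional remarks — that uniform boundedness of $\tilde{\beta}_{h,k}$ in $h$ comes from Theorem~\ref{thm: Ritz mat error}, that the norm equivalence of Lemma~\ref{lemma: Linfty aqui estimate} is needed to pass between $\Gamma_h$ and $\Gamma$, and that the linearly implicit variant requires a bootstrap argument so that the extrapolated argument stays in the region where \eqref{Lipschitz} applies — are all correct refinements of details the paper leaves implicit, but they do not change the route taken.
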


\begin{proof}[Proof of Theorem~\ref{thm: ESFEM RK error}--\ref{thm: ESFEM BDF error}]
    The global error is decomposed into two parts:
    \begin{equation*}
        \disp u_h^n - u(.,t_n) = \Big(u_h^n - (\P u)(.,t_n)\Big) + \Big((\P u)(.,t_n) - u(.,t_n)\Big),
    \end{equation*}
    and the terms are estimated by previous results.

    The first one is estimated by our results for Runge--Kutta or BDF methods: Theorem \ref{thm: RK error estimates} or \ref{thm: BDF error estimates}, \resp, together with the residual bound Theorem \ref{thm: res bound}, and by the Ritz error estimates Theorem~\ref{thm: Ritz error} and \ref{thm: Ritz mat error}.

    The second term is estimated by the error estimates for the Ritz map (Theorem \ref{thm: Ritz error} and \ref{thm: Ritz mat error}).
\end{proof}

\section{Further extensions}
\label{section: extensions}

\subsubsection*{Semilinear problems}

The presented results, in particular Theorem~\ref{thm: ESFEM RK error} and \ref{thm: ESFEM BDF error}, can be generalized to semilinear problems. Convergence results for BDF method were already shown for semilinear problems in \cite{AkrivisLubich_quasilinBDF}. For the analogous results for Runge--Kutta methods follow \cite[Remark~1.1]{LubichOstermann_RK}. Problems fitting into this framework can be found in the references given in the introduction.

The inhomogeneity $f\t$ in the evolving surface PDE \eqref{eq: strong form} can be replaced by $f(t,u)$ satisfying a local Lipschitz condition (similar to \eqref{Lipschitz}): for every $\delta>0$ there exists $L=L(\delta,r)$ \st\
\begin{equation*}
    \| f(t,w_1) - f(t,w_2)\|_{V\t'} \leq \delta\|w_1 - w_2\|_{V\t} + L \|w_1 - w_2\|_{H\t} \quad (0\leq t \leq T)
\end{equation*}
holds for arbitrary $w_1,w_2 \in V\t$ with $\|w_1\|_{V\t},\,\|w_2\|_{V\t} \leq r$, uniformly in $t$. Such a condition can be satisfied by using the same $\S$ set as for quasilinear problems.

To be precise: In this case the bilinear form $a(.,.)$ is not depending on $\xi$, it is as in \cite{DziukElliott_L2}. Section \ref{section: ESFEM} would reduce to recall results mainly from \cite{DziukElliott_ESFEM,DziukElliott_L2}. The stability estimates for the Runge--Kutta and BDF methods are needed to be revised in a straightforward way, cf.\ \cite{LubichOstermann_RK} and \cite{AkrivisLubich_quasilinBDF}, \resp. The generalized Ritz map is the one appeared in \cite{LubichMansour_wave,diss_Mansour} together with its error bounds. The regularity result of the Ritz map still needed from Section~\ref{section: Ritz}.

\subsubsection*{Deteriorating constants}

In view of the cited papers, especially \cite[Remark~1.1]{LubichOstermann_RK}, Theorem~\ref{thm: ESFEM RK error} and \ref{thm: ESFEM BDF error} have an extension to the situation where the constants $\m$ and $\M$, in \eqref{ellipticity} and \eqref{boundedness} are depending on $\|u\|$, and allowed to deteriorate as $\|u\|$ tends to infinity. Using energy estimates deteriorating constants can be handled for nonlinear problems. Then the constant $C$ in Theorem~\ref{thm: ESFEM RK error} and \ref{thm: ESFEM BDF error} depends also on $\sup_{t\in[0,T]} \|u\t\|$.
For instance the incompressible Navier--Stokes equation are fitting into this framework.

\section{Numerical experiments}
\label{section: numerics}

We present a numerical experiment for an evolving surface quasilinear parabolic problem discretized by  evolving surface finite elements coupled with the backward Euler method as a time integrator. The fully discrete methods were implemented in DUNE-FEM \cite{dunefem}, while the initial triangulations were generated using DistMesh \cite{distmesh}.

The evolving surface is given by
\begin{equation*}
    \Ga\t = \big\{ x\in\R^3 \ \big| \ a(t)\inv x_1^2+ x_2^2 +x_3^2 -1 = 0 \big\},
\end{equation*}
where $a(t)=1+0.25\sin(2\pi t)$, see e.g.\ \cite{DziukElliott_ESFEM,DziukLubichMansour_rksurf,diss_Mansour}. The problem is considered over the time interval $[0,1]$. We consider the problem with the nonlinearity $\A(u)=1-\frac{1}{2}e^{-x^2/4}$. The right-hand side $f$ is computed as to have $u(x,t)=e^{-6t} x_1 x_2$ as the true solution of the quasilinear problem
\begin{equation*}
    \begin{cases}
    \begin{alignedat}{4}
        \mat u + u \nb_{\Gat} \cdot v - \nb_{\Gat} \cdot \Big( \A(u) \nb_{\Gat} u\Big) &= f & \qquad & \textrm{ on } \Ga\t ,\\
        u(.,0) &= u_0 & \qquad & \textrm{ on } \Ga(0).
    \end{alignedat}
    \end{cases}
\end{equation*}

\medskip
Let $(\mathcal{T}_k\mathcal{}(t))_{k=1,2,\dotsc,n}$ and $(\tau_k)_{k=1,2,\dotsc,n}$ be a series of triangulations and timesteps, \resp, such that $2 h_k\approx h_{k-1}$ and $4 \tau_k = \tau_{k-1}$, with $\tau_1=0.1$. By $e_k$ we denote the error corresponding to the mesh $\mathcal{T}_k\t$ and stepsize $\tau_k$. Then the EOCs are given as
\begin{equation*}
    EOC_{k}=\frac{\ln(e_{k}/e_{k-1})}{\ln(2)}, \qquad (k=2,3, \dotsc,n).
\end{equation*}
In Table~\ref{table: EOCs} we report on the EOCs, for the ESFEM coupled with backward Euler method, corresponding to the norms
\begin{alignat*}{3}
    L^\infty(L^2):&\qquad \max_{1\leq n \leq N}\|u_h^n - u(.,t_n)\|_{L^2(\Ga(t_n))},\\
    L^2(H^1):&\qquad  \Big(\tau \sum_{n=1}^{N} \|\nb_{\Ga(t_n)}\big(u_h^n - u(.,t_n)\big)\|_{L^2(\Ga(t_n))}\Big)^{1/2}.
\end{alignat*}
\begin{table}[!ht]
    \centering
    \begin{tabular}{r r  l l l l}
        \toprule
        level & dof & $L^\infty(L^2)$ & EOCs & $L^2(H^1)$ & EOCs \\
        \midrule
            1 & 126   & 0.07121892 & -    & 0.1404349 & - \\
            2 & 516   & 0.02077452 & 1.78 & 0.0404614 & 1.80 \\
            3 & 2070  & 0.00540906 & 1.94 & 0.0111377 & 1.86 \\
            4 & 8208  & 0.00136755 & 1.98 & 0.0033538 & 1.73 \\
            5 & 32682 & 0.00034289 & 2.00 & 0.0011904 & 1.49 \\
        \bottomrule
    \end{tabular}
    \caption{Errors and EOCs in the $L^\infty(L^2)$ and $L^2(H^1)$ norms}
    \label{table: EOCs}
\end{table}

Figure~\ref{fig: quasilin_ESFEM_IE} shows the errors obtained by the backward Euler method coupled with ESFEM for four different meshes and a series of time steps. The convergence in time can be seen (note the reference line), while for sufficiently small $\tau$ the spatial error is dominating, in agreement with the theoretical results.
\pagebreak 
\begin{figure}[!ht]
  \centering
  \includegraphics[width=\textwidth]{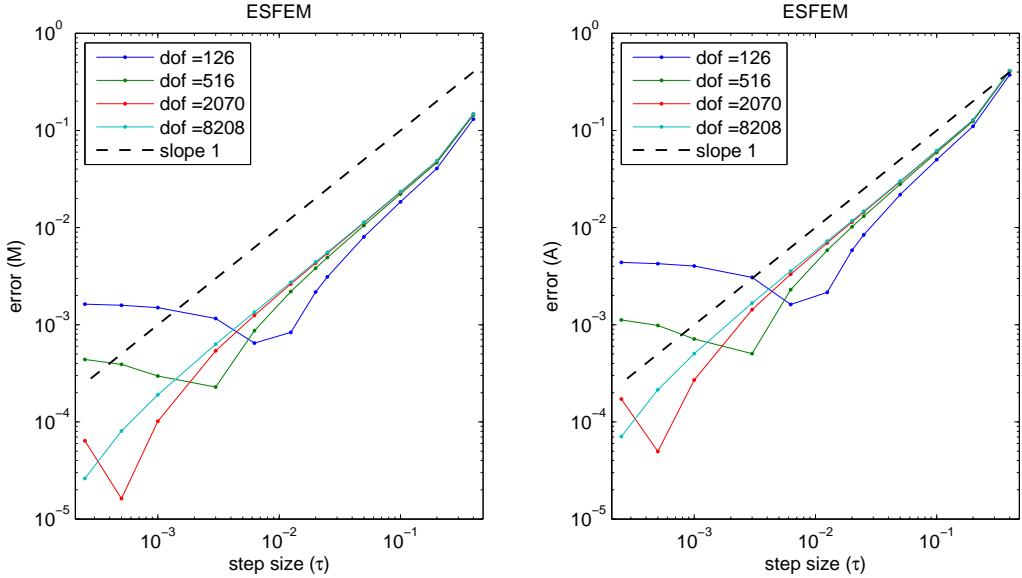}\\
  \caption{$\|.\|_{\BM}$-errors of the ESFEM and the backward Euler method at time $T=1$}
  \label{fig: quasilin_ESFEM_IE}
\end{figure}

Figure~\ref{fig: BDF3_LI} shows the errors obtained by the three step linearly implicit BDF method coupled with ESFEM for five different meshes and a series of time steps. Again the results are matching with the theoretical ones.
\begin{figure}[!ht]
  \centering
  \includegraphics[width=\textwidth]{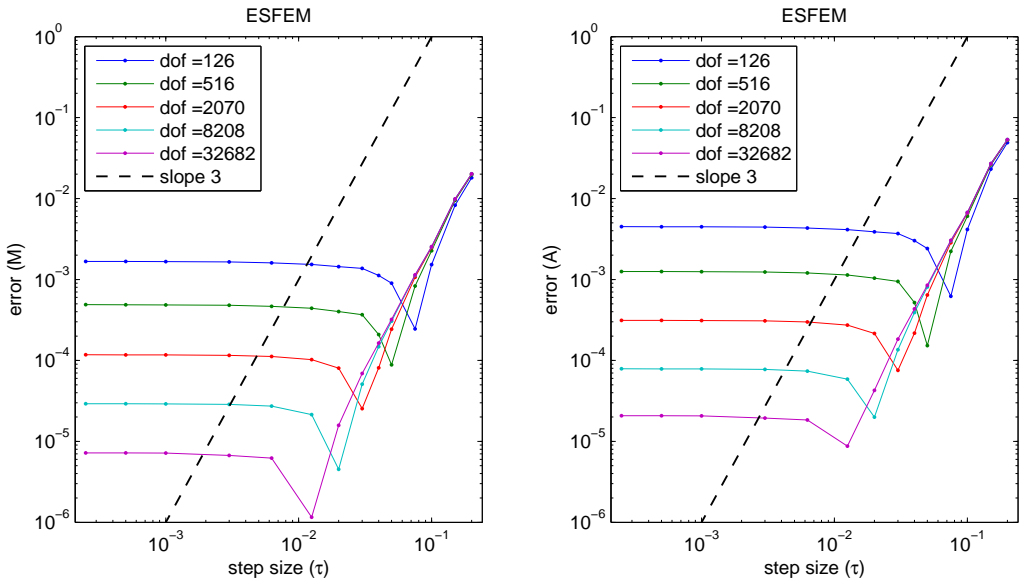}\\
  \caption{$\|.\|_{\BA}$-errors of the ESFEM and the $3$ step linearly implicit BDF method at time $T=1$}
  \label{fig: BDF3_LI}
\end{figure}

We note that, for this example, no significant difference appeared between the fully implicit and linearly implicit BDF methods.

\section*{Acknowledgement}

The authors would like to thank Prof.\ Christian Lubich for the invaluable discussions on the topic and for his encouragement and help during the preparation of this paper.
We would also like to thank Prof.\ Frank Loose and Christopher Nerz for our discussions on the topic.
The research stay of B.K.\ at the University of T\"{u}bingen has been funded by the Deutscher Akademischer Austausch Dienst (DAAD).

\bibliographystyle{alpha}
\bibliography{nonlin_literature}

\appendix

\section{A priori estimates}
\label{sec:cald-zygm-relat}

The result presented here gives regularity result, with a $t$ independent constant, for the elliptic problems appeared in the proofs of the errors in the Ritz map.
\begin{theorem}[Elliptic regularity for evolving surfaces]
  \label{lemma:EllipticRegularity}
  Let $\surface(t)$ be an evolving surface, fix a $t\in
  [0,T]$ and a function $\xi\colon \surface(t)\to \R$.
  \begin{enumerate}[label=(\roman*)]
  \item Let $f\in H^{-1}\bigl(\surface(t)\bigr)$ and
    \begin{align}
      \label{eq:ell_Op}
      L(u) :=  -\nbg \cdot \big(\A(\xi) \nbg u \big) + u.
    \end{align}
    Then there exists a weak solution $u\in H^{1}\bigl(\surface(t)\bigr)$ of the
    problem
    \begin{align}
      \label{eq:PDE_Problem}
      L(u) = f
    \end{align}
    with the estimate
    \begin{align}
      \label{eq:minor_reg_estimate}
      \Hnorm[\surface(t)]{u} \leq c \Hnorm[\surface(t)][-1]{f},
    \end{align}
    where the constant above is independent of $t$.
  \item Let $L(u)$ be \eqref{eq:ell_Op}, let $f\in L^{2}\bigl(\surface(t)\bigr)$ and
    let $u\in H^{1}\bigl(\surface(t)\bigr)$ be a weak solution of
    \eqref{eq:PDE_Problem}.  Then $u$ is a strong solution of
    \eqref{eq:PDE_Problem}, i.e.\ $u$ solves \eqref{eq:PDE_Problem} almost
    everywhere and there exists a constant $c>0$ independent of $t$ and $u$ such
    that
    \[
    \Hnorm[\surface(t)][2]{u} \leq c \bigl(\Lnorm[\surface(t)]{u} +
    \Lnorm[\surface(t)]{f} \bigr).
    \]
  \end{enumerate}
\end{theorem}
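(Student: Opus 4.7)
My plan for part (i) is to apply the Lax--Milgram theorem to the bilinear form
\[
    B(u,v) := \int_{\Gat} \A(\xi)\,\nbg u\cdot\nbg v + \int_{\Gat} uv
\]
on $H^1(\Gat)$. Assumption~\ref{assumptions}(a) bounds $\A(\xi)$ uniformly by some $\M$ (independent of $\xi$ and $t$), which gives $|B(u,v)|\leq \max(\M,1)\|u\|_{H^1(\Gat)}\|v\|_{H^1(\Gat)}$; Assumption~\ref{assumptions}(b) yields $\A(\xi)\geq \m > 0$, hence $B(u,u)\geq \min(\m,1)\|u\|_{H^1(\Gat)}^2$. Lax--Milgram then produces a unique weak solution $u\in H^1(\Gat)$ satisfying \eqref{eq:minor_reg_estimate}, with constant $\min(\m,1)^{-1}$ depending only on $\m$ and therefore independent of $t$.

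For part (ii), the only real difficulty is the uniformity of the constant in $t$; the $H^2$ bound itself is standard elliptic regularity on a smooth closed Riemannian surface. My strategy is to pull the problem back to the fixed reference surface $\Ga(0)$ via the Lagrangian flow $\Phi_t\colon \Ga(0)\to\Gat$ generated by the prescribed smooth velocity $v$. Since $\Ga$ is smooth in space--time and $[0,T]$ is compact, $\Phi_t$, $\Phi_t\inv$ and all their derivatives up to any fixed order are uniformly bounded in $t$, and the pullback metric on $\Ga(0)$ is $t$-uniformly comparable to the original metric on $\Gat$. Setting $\tilde u := u\circ\Phi_t$ and $\tilde f := f\circ\Phi_t$ and transforming $L$ accordingly produces a problem $\tilde L_t \tilde u = \tilde f$ on $\Ga(0)$ whose leading coefficients inherit ellipticity and boundedness from $\m$, $\M$ and the uniform bounds on $D\Phi_t$.

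With this reduction in place, classical elliptic regularity on the fixed smooth closed surface $\Ga(0)$ -- via a finite partition of unity, local charts, and the difference-quotient method -- yields
\[
    \|\tilde u\|_{H^2(\Ga(0))} \leq c\bigl(\|\tilde u\|_{L^2(\Ga(0))} + \|\tilde f\|_{L^2(\Ga(0))}\bigr),
\]
with $c$ depending only on the uniform ellipticity constant and on a $W^{1,\infty}$-bound of the coefficients of $\tilde L_t$. Transporting the inequality back to $\Gat$ through $\Phi_t\inv$ and invoking the $t$-uniform norm equivalence gives the claim with a $t$-independent constant. The main obstacle is to verify that, after pullback, $\A(\xi)$ still admits a uniform $W^{1,\infty}$ bound: this is where we must invoke some regularity of $\xi$ (in every application in the paper, $\xi\in\S\t\subset W^{2,\infty}(\Gat)$ with a $t$-independent bound, and $\A$ is smooth) together with the uniform smoothness of the flow $\Phi_t$. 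Once this is in hand, the chart-by-chart estimates are routine.
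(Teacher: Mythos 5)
Your proof follows essentially the same approach as the paper: Lax--Milgram with $t$-independent coercivity and boundedness constants for (i), and for (ii) pullback to $\Ga(0)$ via the uniformly smooth flow $\Phi_t$ over the compact interval $[0,T]$ followed by local-chart elliptic estimates (the paper cites Gilbarg--Trudinger Theorems~8.8 and 9.11, which are proved by the difference-quotient method you invoke). Your flagging of the required $W^{1,\infty}$ control on $\A(\xi)$ is apt: the theorem statement imposes no regularity on $\xi$, and the paper's proof, like yours, implicitly relies on $\xi\in\S(t)$ so that the transported leading coefficients $a_{ij,n}$ lie in $W^{1,\infty}$ with a $t$-uniform bound.
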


\begin{proof}
  For (i): The Lax--Milgram lemma shows the existence of the weak solution $u$.
  Because the coercivity and boundedness constants \eqref{ellipticity} and
  \eqref{boundedness} are independent of $t$, the constant in
  \eqref{eq:minor_reg_estimate} also not depends on $t$. For (ii): Basically we consider pullback of the operator $L$ to $\Ga(0)$, rewrite it in a local chart and then apply the corresponding results of \cite{GilbargTrudinger}.

  By assumption there exists a diffeomorphic parametrization of our evolving
  surface $\surface(t)$, i.e.\ we have a smooth map
  \[
  \Phi\colon \surface(0) \times [0,T] \to \R^{m+1}
  \]
  such that
  \[
  \Phi_{t} \colon \surface(0) \to \R^{m+1}, \quad \Phi_{t}(x) := \Phi(x,t)
  \]
  is an injective immersion which is a homeomorphism onto its image with
  $\Phi_{t}\bigl(\surface(0)\bigr) = \surface(t)$.  Because $\surface(0)$ is
  compact, there exists a finite atlas
  \[
  \Bigl(\varphi_{n}(0)\colon U_{n}(0)\subset \surface(0)\to
  \R^{m}\Bigr)_{n=1}^{k}
  \]
  such that $\varphi_{n}\bigl(U_{n}(0)\bigr)\subset \R^{m}$ is bounded and a
  finite family of compact sets $\bigl(V_{n}(0)\bigr)_{n=1}^{k}$ with
  $V_{n}(0)\subset U_{n}(0)$, and $\bigcup_{n=1}^{k}V_{n}(0) = \surface(0)$.
  Using the properties of the diffeomorphic parametrization the new
  collections,
  \[
  V_{n}(t):= \Phi_{t}\bigl(V_{n}(0)\bigr),\quad U_{n}(t):=
  \Phi_{t}\bigl(U_{n}(0)\bigr),\quad \varphi_{n}(t) :=
  \varphi_{n}(0)\circ \Phi_{t}^{-1},
  \]
  still have the same properties. 
  Now consider the following standard formulae of Riemannian geometry
  \cite{MCFEckart}:
  \[
  \nbg h (x,t) = \sum_{i,j = 1}^{m} g^{ij}_{n}(x,t) \frac{\partial( h \circ
    \varphi_{n}(t)^{-1})}{\partial x^{i}} \frac{\partial
    \bigl(\varphi_{n}(t)^{-1} \bigr)}{\partial x^{j}},
  \]
  where
  \[
  g_{ij,n}(x,t):= \left. \frac{\partial \bigl(\varphi_{n}(t)^{-1}
      \bigr)}{\partial x^{i}} \cdot \frac{\partial \bigl(\varphi_{n}(t)^{-1}
      \bigr)}{\partial x^{j}}\right\rvert_{x}
  \]
  is the first fundamental form and $g^{ij}_{n}(x,t)$ are entries of the
  inverse matrix of $g_{n} := (g_{ij,n})$, and
  \[
  \nbg \cdot X = \sum_{i,j=1}^{m}\frac{1}{\sqrt{g_{n}}}
  \frac{\partial}{\partial x^{i}} \bigl(\sqrt{g_{n}} g^{ij}_{n}
  X_{j}\bigr)
  \]
  where $X$ is a smooth tangent vector field with $X_{j} = X \cdot
  \frac{\partial \bigl( \varphi(t)^{-1} \bigr)}{\partial x^{j}}$ and $\sqrt{g_{n}}:=
  \sqrt{\det(g_{n})}$.  It is straightforward to calculate that
  \begin{align*}
    \Bigl(- \nbg \cdot \A \nbg u + u \Bigr)\circ \varphi_{n}(t)^{-1}(x) &=
    \sum_{i,j=1}^{m} a_{ij,n}(x,t) \frac{\partial^{2}\bigl( u \circ
      \varphi_{n}(t)^{-1}\bigr) }{\partial x^{i} \partial x^{j}} +
    \sum_{i=1}^{m} b_{i,n}(x,t) \frac{\partial
      \bigl( u\circ \varphi_{n}(t)^{-1}\bigr)}{\partial x^{i}} \\
    & \hphantom{==} + c_{n}(x,t) \, u\circ \varphi_{n}(t)^{-1}
  \end{align*}
  for some appropriate functions $a_{ij,n}\in W^{1,\infty}\bigl(U_{n}(t)\bigr)$,
  $b_{i,n},c_{n}\in L^{\infty}\bigl(U_{n}(t)\bigr) $ where $a_{ij,n}$ represents
  a uniform elliptic matrix.  Observe that the assumptions \eqref{ellipticity},
  \eqref{boundedness} and \eqref{Lipschitz} implies that the function above can
  be bounded independently of $t$. Now \cite[Theorem~8.8]{GilbargTrudinger} states that, if $u\circ
  \varphi_{n}(t)^{-1}$ is the $H^{1}$-weak solution of \eqref{eq:PDE_Problem}, then
  it must be a strong solution as well.

  For the estimate in (ii) observe that
  \cite[Theorem~9.11]{GilbargTrudinger} gives us for $V_{n}(t)$ in particular the estimate
  \begin{align}
    \label{eq:preRegEstimate}
    \Hnorm[V_{n}'(t)][2]{u\circ \varphi_{n}(t)^{-1}} \leq c (
    \Lnorm[U_{n}'(t)]{u\circ \varphi_{n}(t)^{-1}} +
    \Lnorm[U_{n}'(t)]{f\circ \varphi_{n}(t)^{-1}}),
  \end{align}
  where $V_{n}' := \varphi_{n}(t)\bigl(V_{n}(t)\bigr)$ and
  $U_{n}' := \varphi_{n}(t)\bigl(U_{n}(t)\bigr)$ are obviously
  independent of $t$.  Thus the constant above is independent of $t$.  Then Theorem~3.41 in \cite{Adams} shows that
  \[
  \Hnorm[V_{n}(t)][2]{u } \leq c(t) \Hnorm[V_{n}'(t)][2]{u\circ
    \varphi_{n}(t)^{-1}} \leq c \Hnorm[V_{n}'(t)][2]{u\circ
    \varphi_{n}(t)^{-1}},
  \]
  where the constant in the middle depends continuously on $t$, hence the
  last constant is independent of $t$. A similar estimate holds for the right-hand side of \eqref{eq:preRegEstimate}.  An easy calculation finishes the
  proof for (ii).
\end{proof}

\end{document}